\documentclass[11pt]{amsart}
\title{On higher scissors congruence}
\author{Cary Malkiewich}
\address{Department of Mathematics, Binghamton University}
\email{malkiewich@math.binghamton.edu}

\usepackage{amsmath,amssymb,amsthm,array,color,multirow,pdflscape,mathrsfs,subcaption}
\usepackage[all]{xypic}
\usepackage{tikz,tikz-cd}

\usepackage[margin=1.25in]{geometry}
\parindent 0in
\linespread{1.05}
\setlength{\extrarowheight}{3pt}

\makeatletter
\def\l@section{\@tocline{1}{0pt}{1pc}{}{}}
\def\l@subsection{\@tocline{2}{0pt}{1pc}{4.6em}{}}

\def\l@subsubsection{\@tocline{3}{0pt}{1pc}{7.6em}{}}
\renewcommand{\tocsection}[3]{%
	\indentlabel{\@ifnotempty{#2}{\makebox[2.3em][l]{%
				\ignorespaces#1 #2.\hfill}}}#3}
\renewcommand{\tocsubsection}[3]{%
	\indentlabel{\@ifnotempty{#2}{\hspace*{2.3em}\makebox[2.3em][l]{%
				\ignorespaces#1 #2.\hfill}}}#3}
\renewcommand{\tocsubsubsection}[3]{%
	\indentlabel{\@ifnotempty{#2}{\hspace*{4.6em}\makebox[3em][l]{%
				\ignorespaces#1 #2.\hfill}}}#3}
\makeatother

\usepackage{xcolor}
\definecolor{darkgreen}{rgb}{0,0.30,0} 
\definecolor{darkred}{rgb}{0.75,0,0}
\definecolor{darkblue}{rgb}{0,0,0.6} 
\usepackage[pdfborder=0,pagebackref,colorlinks,citecolor=darkgreen,linkcolor=darkgreen,urlcolor=darkblue]{hyperref}
\renewcommand*{\backref}[1]{}
\renewcommand*{\backrefalt}[4]{({%
    \ifcase #1 Not cited.%
          \or On p.~#2%
          \else On pp.~#2%
    \fi%
    })}

%
%
%
\def\makeautorefname#1#2{\expandafter\def\csname#1autorefname\endcsname{#2}}
%
%
\makeautorefname{equation}{Equation}%
\makeautorefname{footnote}{footnote}%
\makeautorefname{item}{item}%
\makeautorefname{figure}{Figure}%
\makeautorefname{table}{Table}%
\makeautorefname{part}{Part}%
\makeautorefname{appendix}{Appendix}%
\makeautorefname{chapter}{Chapter}%
\makeautorefname{section}{Section}%
\makeautorefname{subsection}{Section}%
\makeautorefname{subsubsection}{Section}%
\makeautorefname{paragraph}{Paragraph}%
\makeautorefname{subparagraph}{Paragraph}%
\makeautorefname{theorem}{Theorem}%
\makeautorefname{thm}{Theorem}%
\makeautorefname{addm}{Addendum}%
\makeautorefname{mainthm}{Main theorem}%
\makeautorefname{corollary}{Corollary}%
\makeautorefname{cor}{Corollary}%
\makeautorefname{lemma}{Lemma}%
\makeautorefname{lem}{Lemma}%
\makeautorefname{sublemma}{Sublemma}%
\makeautorefname{sublem}{Sublemma}%
\makeautorefname{subl}{Sublemma}%
\makeautorefname{prop}{Proposition}%
\makeautorefname{prob}{Problem}%
\makeautorefname{property}{Property}
\makeautorefname{pro}{Property}
\makeautorefname{sch}{Scholium}%
\makeautorefname{step}{Step}%
\makeautorefname{conject}{Conjecture}%
\makeautorefname{conj}{Conjecture}%
\makeautorefname{questn}{Question}
\makeautorefname{quest}{Question}
\makeautorefname{qn}{Question}
\makeautorefname{definition}{Definition}%
\makeautorefname{defn}{Definition}%
\makeautorefname{defi}{Definition}%
\makeautorefname{def}{Definition}%
\makeautorefname{dfn}{Definition}%
\makeautorefname{df}{Definition}%
\makeautorefname{notation}{Notation}
\makeautorefname{notn}{Notation}
\makeautorefname{rem}{Remark}%
\makeautorefname{rems}{Remarks}%
\makeautorefname{rmk}{Remark}%
\makeautorefname{rk}{Remark}%
\makeautorefname{remarks}{Remarks}%
\makeautorefname{rems}{Remarks}%
\makeautorefname{rmks}{Remarks}%
\makeautorefname{rks}{Remarks}%
\makeautorefname{example}{Example}%
\makeautorefname{examp}{Example}%
\makeautorefname{exmp}{Example}%
\makeautorefname{exam}{Example}%
\makeautorefname{exa}{Example}%
\makeautorefname{ex}{Example}%
\makeautorefname{axiom}{Axiom}%
\makeautorefname{axi}{Axiom}%
\makeautorefname{ax}{Axiom}%
\makeautorefname{case}{Case}%
\makeautorefname{claim}{Claim}%
\makeautorefname{clm}{Claim}%
\makeautorefname{assumpt}{Assumption}%
\makeautorefname{asses}{Assumptions}%
\makeautorefname{conclusion}{Conclusion}%
\makeautorefname{concl}{Conclusion}%
\makeautorefname{conc}{Conclusion}%
\makeautorefname{cond}{Condition}%
\makeautorefname{const}{Construction}%
\makeautorefname{con}{Construction}%
\makeautorefname{criterion}{Criterion}%
\makeautorefname{criter}{Criterion}%
\makeautorefname{crit}{Criterion}%
\makeautorefname{exercise}{Exercise}%
\makeautorefname{exer}{Exercise}%
\makeautorefname{exe}{Exercise}%
\makeautorefname{warn}{Warning}%
%

\newtheorem{thm}{Theorem}[section]
\newtheorem{cor}{Corollary}[section]
\newtheorem{lem}{Lemma}[section]
\newtheorem{prop}{Proposition}[section]
\newtheorem{prob}{Problem}[section]

\theoremstyle{definition}
\newtheorem{df}{Definition}[section]
\newtheorem{ex}{Example}[section]

\newtheorem{rmk}{Remark}[section]
\newtheorem{notn}{Notation}[section]
\numberwithin{equation}{section}
\numberwithin{figure}{section}

\makeatletter
\let\c@cor=\c@thm
\let\c@prop=\c@thm
\let\c@prob=\c@thm
\let\c@lem=\c@thm
\let\c@df=\c@thm
\let\c@ex=\c@thm
\let\c@warn=\c@thm
\let\c@rmk=\c@thm
\let\c@notn=\c@thm
\let\c@equation\c@thm
\let\c@figure\c@thm
\let\c@table\c@thm
\makeatother

\hypersetup{linktoc=all}



\newcommand{\bI}{\mathbf{I}}
\newcommand{\bJ}{\mathbf{J}}


\newcommand{\R}{\mathbb R}

\newcommand{\Z}{\mathbb Z}
\newcommand{\Q}{\mathbb Q}
\newcommand{\Sph}{\mathbb S}
\newcommand{\mc}{\mathcal}
\newcommand{\cat}[1]{\textup{\textbf{{#1}}}}




\newcommand{\Map}{\textup{Map}}

\newcommand{\colim}{\textup{colim}\,}
\newcommand{\hocolim}{\textup{hocolim}\,}
\newcommand{\uhocolim}{\textup{hocolim}^u\,}
\newcommand{\tcofib}{\textup{tcofib}\,}

\newcommand{\sma}{\wedge}

\newcommand{\ti}{\widetilde}
\newcommand{\op}{\textup{op}}

\newcommand{\sd}{\textup{sd}}

\newcommand{\Th}{\textup{Th}}

\newcommand{\po}[2]{\ar@{}@<{#2}>[rd]|({#1})*\txt{\Large $\ulcorner$}}
\newcommand{\pb}[2]{\ar@{}@<{#2}>[rd]|({#1})*\txt{\Large $\lrcorner$}}

\newcommand{\sbt}{\,\begin{picture}(-1,1)(0.5,-1)\circle*{1.8}\end{picture}\hspace{.05cm}}

\newcommand{\Fin}{\mathrm{Fin}}

\DeclareMathOperator{\PT}{PT}
\DeclareMathOperator{\ST}{ST}
\DeclareMathOperator{\CT}{CT}
\DeclareMathOperator{\Pt}{Pt}
\DeclareMathOperator{\St}{St}
\DeclareMathOperator{\Tpl}{Tpl}
\DeclareMathOperator{\T}{T}
\DeclareMathOperator{\GL}{GL}
\DeclareMathOperator{\spa}{span}
\DeclareMathOperator{\coker}{coker}

\newcommand{\Pol}[2]{\mathcal P^{#1}_{\hspace{.1em} #2}}

\hyphenation{un-derived}
\hyphenation{mon-oid-al}
\hyphenation{bi-cat-e-gory}
\hyphenation{pa-ram-e-trized}
\hyphenation{cat-e-gor-ies}

\usepackage{todonotes}

\begin{document}

\begin{abstract}
	We solve the higher version of Hilbert's Third Problem for one-dimensional geometries, and in higher dimensions we reduce the problem to a computation in group homology. Our central result concerns the scissors congruence $K$-theory spectrum of Zakharevich, whose homotopy groups are the correct higher version of the classical scissors congruence groups. We prove that this spectrum is a Thom spectrum, whose base space is the homotopy orbit space of a Tits complex. The relevant computations quickly follow from this more foundational result.
\end{abstract}

\maketitle

\setcounter{tocdepth}{1}
\tableofcontents

\parskip 2.5ex

\section{Introduction}

We say that two Euclidean polygons $P$ and $Q$ are {\bf scissors congruent} if $P$ can be cut into finitely many pieces and rearranged to form $Q$. This clearly implies that they have the same area. However, the converse is also true: if $P$ and $Q$ have the same area then they are scissors congruent \cite{wallace_bolyai_gerwien}.

{\bf Hilbert's Third Problem} asks the same question in dimension three: given two Euclidean polyhedra $P$ and $Q$, when are they scissors congruent? Is it enough to check that they have the same volume? Dehn famously shows that this is false: $P$ and $Q$ must also have the same Dehn invariant, in addition to having the same volume \cite{dehn}. Later Sydler shows that the volume and Dehn invariant suffice to establish scissors congruence \cite{sydler}, finishing the solution to this form of Hilbert's Third Problem.

Hilbert's Third Problem can be considered as a question not just about three-dimensional Euclidean polytopes, but about polytopes in Euclidean, hyperbolic, or spherical geometry in any fixed dimension. When are $P$ and $Q$ scissors congruent? As already mentioned, this version of the problem has been solved in Euclidean geometry in dimension up to three, and in fact, Jessen solves the problem for four-dimensional Euclidean geometry as well \cite{jessen}. But in dimensions five and above, the problem remains open. In hyperbolic and spherical geometry, the problem is also solved up to dimension two, and though we know a great deal about dimension three, see e.g.~\cite[Thm 1.7]{dupont_book}, the problem is still open in this case as well. This is related to the open question of whether the volumes of hyperbolic 3-manifolds are all commensurable, see \cite[3.9]{neumann_notes} and \cite{thurston_problems}.

It is fruitful to stipulate that, when rearranging the pieces of $P$ to form $Q$, we only allow moves lying in some fixed subgroup $G$ of the isometry group $I(X)$ of the geometry $X$. We say in this case that $P$ and $Q$ are {\bf $G$-scissors congruent}. This form of the problem is especially interesting in the Euclidean case, where we may take $G$ to be the subgroup of translations; a complete solution is then given in all dimensions by Jessen and Thorup \cite{jessen_thorup}, see also \cite{dupont_82,dupont_book}.

Most of these results on scissors congruence, particularly the ones from the last 50 years, are driven by casting the problem in algebraic terms. The central object of study is an abelian group called the {\bf scissors congruence group} $\mc P(X,G) = K_0(\Pol{X}{G})$. It is defined as the free abelian group on the polytopes $[P]$ in $X$, modulo relations
\begin{itemize}
	\item $[P] = \sum_i [P_i]$ when the $P_i$ have disjoint interiors and cover $P$, and
	\item $[P] = [gP]$ for every $g \in G$.
\end{itemize}
Alternatively, let the polytope module $\Pt(X)^t$ be the quotient by just the first set of relations. \vspace{.5em}
\begin{equation}\label{intro_homology_eq}
	K_0(\Pol{X}{G}) = H_0(G;\Pt(X)^t).
\end{equation}
\vspace{-1.5em}

The polytopes $P$ and $Q$ are $G$-scissors congruent iff $[P] = [Q]$ in this abelian group, at least when $G$ acts transitively on $X$ (\cite{zylev_65}, \cite[\S 1.3]{sah_79}). Therefore, solving the scissors congruence problem amounts to computing the abelian group $K_0(\Pol{X}{G})$. Furthermore, the expression \eqref{intro_homology_eq} allows for algebraic approaches using group homology. See for example  \cite{jessen_karpf_thorup,jessen_thorup,sah_79,dupont_82,ds1,dupont_parry_sah,ds2,dupont_book,cath1,cz}.

\begin{rmk}\label{intro_twist}
	The $t$ decoration on $\Pt(X)$ indicates that the action of $g \in G$ sends $[P]$ to $[gP]$. This is considered a ``twisted'' action; the untwisted action is the one that sends $[P]$ to $\pm [gP]$, the sign depending on whether $g$ preserves orientation. It is not hard to show that the coinvariants under the untwisted action are always zero, see \cite[Thm 2.2]{dupont_book}.
\end{rmk}

As the notation suggests, $K_0(\Pol{X}{G})$ is the first of a natural sequence of groups $K_n(\Pol{X}{G})$, the {\bf higher scissors congruence groups}. They are defined as the homotopy groups of the scissors congruence $K$-theory spectrum $K(\Pol{X}{G})$ defined by Zakharevich \cite{zak_assemblers}. These higher groups capture information, not just about polytopes up to scissors congruence, but about the \emph{scissors congruences themselves}. Hilbert's Third Problem is therefore a special case of the following.

\begin{prob}[Higher scissors congruence problem]\label{intro_problem}
	Compute or geometrically describe the higher scissors congruence groups $K_m(\Pol{X}{G})$ for all geometries $X$ and subgroups $G \leq I(X)$.
\end{prob}

This problem has been previously studied (using different language) for $K_1$ of one-dimensional geometries, where it becomes a question about the group of interval exchange transformations. See \cite{saf1,saf2,saf3,zak_assemblers,KLMMS-1}.

This is a deep and difficult problem, especially when we move above $K_0$ and $K_1$. One issue is that the tools we have for computing higher $K$-groups do not apply to the setting of polytopes, because the $K$-theory of polytopes is a form of ``cut-and-paste'' $K$-theory. This does not fit into the classical framework for $K$-theory of a category with a notion of an exact sequence, such as an exact category, a Waldhausen category, or a stable $\infty$-category.

We may illustrate the importance of \autoref{intro_problem} by comparing it to the higher algebraic $K$-theory of fields, for which there is a rich literature, see e.g. \cite{weibel} for a survey. Higher scissors congruence is to Hilbert's Third Problem what the higher algebraic $K$-groups are to linear algebra---a substantially deeper invariant with connections to several related fields. Indeed, even $K_0$ is related to homological stability for orthogonal groups \cite{cath3}, volumes of hyperbolic manifolds \cite{goncharov}, characteristic classes of flat bundles \cite{dupont_book,cz}, and conjecturally, mixed Tate motives \cite{goncharov,rudenko}. Many of these relationships extend to the higher scissors congruence groups as well.

Now that we have motivated \autoref{intro_problem}, we turn to our results. In this paper we solve the higher scissors congruence problem for each of the one-dimensional geometries, when $G$ is the full isometry group or the subgroup of orientation-preserving isometries. Our results agree on $K_1$ with those of Zakharevich in \cite[Sec 4]{zak_k1}, which come from an earlier result of Sah \cite{saf1}.
\begin{thm}\label{intro_dim_one}
	The higher scissors congruence groups for the Euclidean line $E^1$ and the circle $S^1$, when $G$ is the group of orientation-preserving isometries, are given by
	\[ K_n \cong \R^{\wedge (n+1)}. \]
	When $G$ is the full isometry group, the answer is in both cases
	\[ K_{2n} = \R^{\wedge (2n+1)}, \qquad K_{2n+1} = 0. \]
	
\end{thm}
Here $\wedge$ denotes exterior product over $\Z$ or $\Q$. Since $H^1 \cong E^1$, these cases exhaust all of the standard one-dimensional geometries.

Above dimension one, we reduce \autoref{intro_problem} to a question about group homology.

\begin{thm}\label{intro_homology}
	For any geometry $X$ and subgroup $G \leq I(X)$, the spectrum homology of scissors congruence $K$-theory is expressed as
	\[ H_m(K(\Pol{X}{G})) \cong H_m(G; \Pt(X)^t).\]
	In particular we get a rational isomorphism
	\[ K_m(\Pol{X}{G}) \otimes \Q \cong H_m(G; \Pt(X)^t) \otimes \Q. \]
\end{thm}
It turns out that $\Pt(E^n)$ is isomorphic to the Steinberg module of the affine Tits building, which we denote $\St(E^n)$, see \autoref{pt}.

Note that \autoref{intro_homology} is a direct generalization of \eqref{intro_homology_eq} to the higher scissors congruence groups. As a result, all of the classical work on scissors congruence based on group homology, from e.g. \cite{sah_79,ds1,dupont_parry_sah,dupont_book,cath1}, can now be applied to the higher scissors congruence groups as well. The reader may also note the conceptual similarity (and technical differences) between this result and the one that relates the higher algebraic $K$-theory of fields to the homology of the general linear group:
\[ H_m(\Omega^\infty K(F)) \cong H_m(GL_\infty(F)). \]

 We also prove a rationality result for Euclidean geometry:

\begin{thm}\label{intro_translation_homology}
	In the Euclidean case $X = E^n$, if $G$ contains all translations, then $K_m(\Pol{E^n}{G})$ is rational and the above expression holds integrally,
	\[ K_m(\Pol{E^n}{G}) \cong H_m(G; \Pt(E^n)^t). \]
\end{thm}
This extends the known result that the lowest group $K_0(\Pol{E^n}{G})$ is rational when $G$ contains all translations. The rationality of the scissors congruence groups $\mc P(X) = K_0(\Pol{X}{I(X)})$ for non-Euclidean geometries is an open problem in general.

\autoref{intro_translation_homology} demonstrates that the higher scissors congruence groups contain substantial arithmetic information about the real numbers. Indeed, this formula for the higher scissors congruence groups is closely related to Quillen's filtration of $\Omega^{\infty - 1}K(R)$ from \cite{quillen_fg}, which was recently used to show that $K_8(\Z) = 0$ \cite{sevkm}.

The group homology in \autoref{intro_homology} is very difficult to compute, as evidenced by the fact that we only know $K_0$ for the geometries $H^1$, $H^2$, $S^1$, $S^2$, and $E^1$ through $E^4$. Still, the above theorems provide ample motivation to extend these calculations further. It may be that the higher $K$-groups of low-dimensional geometries are more tractable than $K_0$ of $E^5$.

We next describe the central result that makes these computations possible. Define the {\bf polytopal Tits complex} $\PT(X)$ to be the homotopy quotient of the geometry $X$ by the homotopy colimit of all its proper subspaces. This is a space whose reduced homology is concentrated in a single degree, where it gives the abelian group $\Pt(X)$. This is a variant of the Solomon-Tits theorem \cite{solomon_tits} that we prove in \autoref{our_st}.

The tangent bundle $TX$ of the geometry $X$ passes to a vector bundle on $\PT(X) \setminus \{*\}$, which is enough to define a reduced Thom spectrum
\[ \PT(X)^{-TX} = \Sigma^{-TX}\PT(X). \]
Intuitively, this is a formal de-suspension of the based space $\PT(X)$, where the suspensions are twisted according to the vector bundle $TX$. The isometry group $G$ (as a discrete group) acts on this in a natural way, and the homotopy orbits are again a reduced Thom spectrum, which we denote $\PT(X)_{hG}^{-TX}$. Our main theorem can then be stated as follows.

\begin{thm}\label{intro_main}
	If $X$ is any geometry and $G \leq I(X)$ is any subgroup, there is an equivalence of spectra
	\[ K(\Pol{X}{G}) \simeq \PT(X)_{hG}^{-TX}. \]
\end{thm}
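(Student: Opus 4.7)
The plan is to construct an explicit Pontryagin--Thom map $\phi \colon K(\Pol{X}{G}) \to \PT(X)_{hG}^{-TX}$ from Zakharevich's assembler $K$-theory to the Thom spectrum, and then prove it is an equivalence by localization along the filtration of $X$ by subspaces.

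First I would work with the assembler presentation of $K(\Pol{X}{G})$ whose objects are polytopes in $X$, whose morphisms are $G$-isometries, and whose covering families are disjoint-interior decompositions. To define $\phi$ on a top-dimensional polytope $P$, I send it to the cell represented by its interior $\mathrm{int}\,P$ in $\PT(X)$, with Thom twist provided by the canonical trivialization of $TX|_{\mathrm{int}\,P}$. The scissors relation $[P] = \sum_i [P_i]$ is respected because the disjoint union $\sqcup_i \mathrm{int}\,P_i$ differs from $\mathrm{int}\,P$ only by a codimension-one subspace, which becomes null in $\PT(X)$ by construction (the quotient by proper nonempty subspaces kills it). Passing to $G$-homotopy orbits is functorial and produces the target $\PT(X)_{hG}^{-TX}$.

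Next I would prove $\phi$ is an equivalence by induction on $\dim X$. Both sides carry matching filtrations indexed by the minimal linear subspace of $X$ containing a polytope: on the $K$-theory side, a Zakharevich-style localization sequence for each such subspace; on the Thom side, the subspace filtration of $\PT(X)$. The map $\phi$ intertwines these filtrations tautologically, so the theorem reduces to comparing each layer. A typical layer corresponds to polytopes whose affine hull is a single $G$-orbit of $n$-dimensional subspaces $Y \subset X$, and on that layer the claim becomes an equivalence between the $K$-theory of polytopes contained in $Y$ (twisted by the $G$-action on the orbit) and $\PT(Y)^{-TY}_{hG_Y}$ induced up from the stabilizer $G_Y$ to $G$. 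The outer induction then handles $Y$.

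The main obstacle is this layer-by-layer identification, since it is essentially the theorem for the lower-dimensional geometry $Y$ together with an equivariant bookkeeping. I expect the cleanest approach is to build a common simplicial model indexed over the flag complex of $X$: in such a model, $\PT(X)$ is expressed as a geometric realization built from contractible subspace cells, and $K(\Pol{X}{G})$ is presented as the $K$-theory of a simplicial assembler with the same indexing. Each simplex of the flag complex then contributes a Thom spectrum of $TY$ over a contractible base, where the comparison $\phi$ is immediate, and gluing along faces (with attention to the $G$-action permuting flags) assembles the full equivalence. The base case is a $0$-dimensional subspace, where both sides reduce to the suspension spectrum of $BG_x^+$ for the stabilizer of a point, matching on the nose.
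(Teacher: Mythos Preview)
Your proposed filtration by ``minimal linear subspace containing a polytope'' does not exist on the $K$-theory side. In this paper, a polytope in $X$ is by definition a finite union of \emph{top-dimensional} geometric simplices (see \autoref{sec:polytopes}), so every nonempty object of the assembler $\Pol{X}{G}$ has affine hull equal to $X$ itself. There is no layer corresponding to polytopes spanning a proper subspace $Y \subsetneq X$, and hence no Zakharevich-style localization sequence indexed by subspaces to match the subspace filtration defining $\PT(X)$. Your induction on $\dim X$ therefore never gets off the ground: the only layer is the top one, which is the full theorem.

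Relatedly, your construction of $\phi$ is not a definition of a map of spectra. The assembler $K$-theory is a Segal $\Gamma$-space, not a free object on polytopes, so ``send $P$ to the cell of $\mathrm{int}\,P$'' does not specify a map out of it; one needs a functor on the category $\mathcal W(\Pol{X}{G})$ respecting the $\Gamma$-structure. Even your base case hides the real content: for a $0$-dimensional geometry the $K$-theory is $K(\Fin)$, and identifying this with $\Sph$ is exactly the Barratt--Priddy--Quillen theorem, which never appears in your sketch. The paper's proof is organized around precisely this point: it filters $K(\Pol{X}{1})$ not by subspaces but by tuples of almost-disjoint simplices (the category $\mathcal D$), shows via an adjunction that each stage is a product of copies of $K(\Fin)\simeq\Sph$, then uses Pontryagin--Thom collapses and a contractible space of ``introspective'' maps to reach $\PT(X)$. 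The passage from $G=1$ to general $G$ is a separate Farrell--Jones-type theorem (\autoref{farrell_jones}) imported from the companion paper, not something absorbed into a direct equivariant construction.
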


Let $\T(X)$ denote the Tits complex, in other words the realization of the poset of proper nonempty subspaces of $X$. Then the space $\PT(X)$ has a canonical map to the unreduced suspension $\ST(X)$:
\begin{equation}\label{intro_pt_to_st}
	\PT(X) \to \ST(X).
\end{equation}
In the Euclidean and hyperbolic cases, the map \eqref{intro_pt_to_st} is an equivalence, and therefore
\[ K(\Pol{X}{G}) \simeq \ST(X)_{hG}^{-TX}. \]
In the spherical case, the map \eqref{intro_pt_to_st} is not an equivalence, but the spectrum $\ST(X)_{hG}^{-TX}$ still has an important role to play. We define the reduced spherical $K$-theory spectrum $\ti K(\Pol{S(W)}{1})$ to be $K(\Pol{S(W)}{1})$ modulo the homotopy colimit of the spectra $K(\Pol{S(V)}{1})$ for $V \subseteq W$, along maps that suspend the polytopes. The definition for larger groups $G$ is similar. (See \autoref{sec:reduced}.) At $K_0$, this recovers the reduced spherical scissors congruence groups $\ti{\mc P}(S(W),G)$ that appear in the definition of the Dehn invariant, see e.g. \cite{sah_79,dupont_book,cath1}.
\begin{thm}\label{intro_reduced}
	For any inner product space $W \neq 0$ and any subgroup $G \leq O(W)$, there is an equivalence of spectra
	\[ \ti K(\Pol{S(W)}{G}) \simeq \ST(S(W))_{hG}^{1-W}. \]
\end{thm}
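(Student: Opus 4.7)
The plan is to derive Theorem \ref{intro_reduced} from Theorem \ref{intro_main} by passing to an appropriate cofiber on both sides. First, I apply Theorem \ref{intro_main} to each nonzero linear subspace $V \subseteq W$ to obtain
\[ K(\Pol{S(V)}{G_V}) \simeq \PT(S(V))_{hG_V}^{-TS(V)} \simeq \PT(S(V))_{hG_V}^{1-V}, \]
using the identification $-TS(V) = 1 - V$ as virtual bundles (which holds because the tautological line bundle on $S(V)$ is trivialized by the position section). Here $G_V$ denotes the natural subgroup associated to $V$, for instance $G \cap O(V)$.

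The next step is to verify that these equivalences are natural with respect to the spherical-suspension maps on $K$-theory. The join-with-$S(V^\perp)$ map $K(\Pol{S(V)}{G_V}) \to K(\Pol{S(W)}{G})$ should correspond under the equivalences above to the map of Thom spectra $\PT(S(V))_{hG_V}^{1-V} \to \PT(S(W))_{hG}^{1-W}$ induced by the subspace inclusion $S(V) \hookrightarrow S(W)$, together with the virtual-bundle relation $(1 - V) = (1 - W) \oplus V^\perp$ that reconciles the differing twists. This naturality should be traceable through the construction underlying Theorem \ref{intro_main}.

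By the definition of $\ti K$ recalled just before the statement, $\ti K(\Pol{S(W)}{G})$ is the cofiber of the natural map from $\hocolim_{V \subsetneq W} K(\Pol{S(V)}{G_V})$ to $K(\Pol{S(W)}{G})$. Substituting the equivalences from the first two steps reduces the theorem to showing that the cofiber of
\[ \hocolim_{V \subsetneq W} \PT(S(V))_{hG_V}^{1-V} \longrightarrow \PT(S(W))_{hG}^{1-W} \]
is equivalent to $\ST(S(W))_{hG}^{1-W}$. Using that the Thom-spectrum functor commutes with homotopy colimits, this should reduce to a cofiber computation at the level of base spaces, carrying the single twist $1 - W$: collapsing each subsphere $S(V)$ for $V \subsetneq W$ to a point transforms $\PT(S(W))$ into the unreduced suspension of the Tits complex, namely $\ST(S(W))$. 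This is exactly the geometric content that makes the map $\PT(X) \to \ST(X)$ fail to be an equivalence only in the spherical case.

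I expect the main obstacle to be the coherent handling of the Thom twists $1 - V$ as $V$ varies. Assembling them into the single twist $1 - W$ via the relations $(1 - V) = (1 - W) \oplus V^\perp$, and verifying that the induced cofiber on base spaces is genuinely $\ST(S(W))$ rather than some closely related quotient, will require careful bookkeeping, most likely via an explicit point-set model for the Thom spectra involved. The Thom isomorphism and the compatibility of Thom spectra with homotopy colimits along bundle maps are the essential tools.
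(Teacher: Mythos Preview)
Your outline is the natural first guess, but it underestimates where the actual work lies and contains a conflation in the last step.

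On step 2: the equivalence of \autoref{intro_main} is not a single map but a zig-zag (the diagram \eqref{main_diagram}), and one leg---the introspective map---is only specified up to a contractible space of choices, made equivariant by smashing with $EG_+$. To take a homotopy colimit over all $V\subseteq W$ you would need these zig-zags to be coherently compatible with the suspension maps, not just termwise equivalences. That coherence is exactly the content of \autoref{sec:reduced}: the paper enlarges the indexing category $\mc D(W)$ to $\mc A(W)\supseteq\mc A_\Sigma(W)$ so that the suspension maps become inclusions of subcomplexes (\autoref{suspension_main_diagram_1}, \autoref{suspension_main_diagram_2}), and then replaces the introspective maps by a new notion of \emph{obedient} maps (\autoref{obedient}) tailored to land in $\ST$ rather than $\PT$. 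Your phrase ``should be traceable through the construction'' hides essentially all of the proof.

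On step 4: you identify the cofiber you need with ``collapsing each subsphere $S(V)$ to a point,'' which is the passage $\PT(S(W))\to\ST(S(W))$. But those are two different cofibers. The map $\PT\to\ST$ collapses the spaces $U$ appearing in the diagram of \autoref{PT}; the cofiber defining $\ti K$ quotients by the images of the \emph{suspension} maps $\PT(S(V))\to\PT(S(W))$, and the suspended polytope $P\ast S(V^\perp)$ does not sit inside the ``$S(V)$ part'' of $\PT(S(W))$. The paper does not identify these cofibers directly; instead it builds an explicit map from the quotient \eqref{big_quotient} to $\Sigma\ST(W)$ via obedient maps, and then separately exhibits a nontrivial nullhomotopy on the $\mc A_\Sigma$ piece (\autoref{big_nullhomotopy}). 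The remark after that proposition notes that one cannot simply arrange the map to vanish on $\mc A_\Sigma$ from the start, so this extra homotopy is genuinely needed.

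A minor point: the reduced spectrum for general $G$ is defined in the paper as $(\ti K(\Pol{S(W)}{1}))_{hG}$, so the hocolim should be taken with $G=1$ throughout and then homotopy orbits applied at the end; the groups $G_V$ do not enter.
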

By definition, we have $\ti K(\Pol{S(0)}{G}) = K(\Pol{S(0)}{G}) \simeq \Sph$. See \autoref{sec:examples}.

The proof of the main theorem (\autoref{intro_main}) can be sketched as follows. We first consider the case of the trivial subgroup $G = 1$. We use the Barratt-Priddy-Quillen theorem and a filtration argument to establish a Solomon-Tits theorem for the $K$-theory spectrum:
\begin{thm}[M-Zakharevich]\label{intro_st}
	For any geometry $X$, the spectrum $K(\Pol{X}{1})$ is (non-equivariantly) equivalent to a wedge of sphere spectra $\Sph$.
\end{thm}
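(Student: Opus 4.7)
The plan is to model $K(\Pol{X}{1})$ using Zakharevich's assembler framework, where polytopes are the objects, subdivisions are the only nontrivial morphisms, and no isometry relations are imposed. Because no group action is in play, each polytope is its own isomorphism class, so the only genuine $K$-theoretic relations come from cutting. The strategy is to filter the assembler by the dimension of the affine or geodesic span, identify each layer via Barratt--Priddy--Quillen, and split the resulting tower using the classical Solomon--Tits theorem on the Tits complex $\T(X)$.

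Concretely, let $\sA_d \subseteq \Pol{X}{1}$ denote the sub-assembler of polytopes whose span has dimension at most $d$. This yields a tower of cofiber sequences
$$K(\sA_{d-1}) \longrightarrow K(\sA_d) \longrightarrow K(\sA_d)/K(\sA_{d-1}), \qquad 0 \leq d \leq \dim X,$$
whose top term is $K(\Pol{X}{1})$. For each $d$-dimensional subspace $Y \subsetneq X$, the polytopes with span equal to $Y$ contribute an assembler whose $K$-theory, modulo the lower-dimensional boundary pieces already incorporated into $K(\sA_{d-1})$, should split off as a wedge of sphere spectra indexed by the natural choice of top-dimensional generators. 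Since no isometries are imposed, the isomorphism classes in this quotient assembler are genuinely the polytopes themselves, and Barratt--Priddy--Quillen identifies the contribution as a wedge of $\Sph$'s. Assembled over all $d$-dimensional $Y$, the layer $K(\sA_d)/K(\sA_{d-1})$ is again a wedge of sphere spectra.

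The remaining task is to split the tower globally, so that $K(\Pol{X}{1})$ itself is a wedge of $\Sph$'s. For this I would invoke the Solomon--Tits theorem: for each top-dimensional simplex generator of $\T(X)$, a representative chain $Y_0 \subsetneq Y_1 \subsetneq \cdots \subsetneq Y_{\dim X - 1}$ of proper subspaces provides a canonical lift, compatible with subdivision, of the corresponding BPQ summand back into $K(\Pol{X}{1})$. The collection of these sections assembles into a global wedge decomposition.

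The hard part is this last step: upgrading the classical Solomon--Tits decomposition, which on the nose is a statement about the reduced homology of the poset of proper nonempty subspaces being concentrated in top degree, to a genuine splitting of the tower at the spectrum level. Equivalently, one must check that all $k$-invariants in the filtration vanish, which requires an explicit geometric construction of sections from flag data in $\T(X)$. This ``$K$-theoretic Solomon--Tits'' statement is the backbone on which the equivariant refinement in \autoref{intro_main} will be built.
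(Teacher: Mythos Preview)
Your proposed filtration does not exist in this setup. In the paper's conventions (see \autoref{sec:polytopes}), a polytope $P \subseteq X$ is by definition a finite union of \emph{top-dimensional} simplices; a $1$-simplex in $E^2$ is explicitly not a polytope. Consequently every nonempty object of $\Pol{X}{1}$ has span equal to $X$, and your sub-assembler $\sA_d$ is just $\{\emptyset\}$ for $d < n$ and all of $\Pol{X}{1}$ for $d = n$. The tower collapses to one step and gives you nothing.

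Even setting that aside, the mechanism you sketch for applying Barratt--Priddy--Quillen to the layers is vague. You say the quotient assembler ``should split off as a wedge of sphere spectra indexed by the natural choice of top-dimensional generators,'' but you do not say what those generators are or why the assembler structure on them is trivial. The paper's actual argument supplies exactly this missing idea, and it uses a completely different filtration: one writes $K(\Pol{X}{1})$ as a \emph{filtered} homotopy colimit over the poset $\mathcal D$ of finite almost-disjoint collections of simplices $\{P_i\}$ (ordered by subdivision-and-extension). For each fixed $\{P_i\}$, the sub-assembler $\mc C_{\{P_i\}}$ of polytopes that are unions of some of the $P_i$ admits an explicit adjunction to $\prod_i \Fin$, sending a formal coproduct $\coprod_j Q_j$ to the tuple recording which $Q_j$ contain each $P_i$. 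This adjunction is the genuine content; it gives $K(\mc C_{\{P_i\}}) \simeq \prod_i K(\Fin) \simeq \prod_i \Sph$ on the nose, with the maps in $\mathcal D$ acting by diagonals and zero maps.

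Because the colimit is filtered and the maps are sums of identities of $\Sph$, one reads off $\pi_k K(\Pol{X}{1}) \cong \pi_k(\Sph) \otimes_\Z \Pt(X)$ directly. The Solomon--Tits input enters only at the end, and only as the statement that $\Pt(X)$ is free abelian, so that a $\pi_*$-isomorphism from a wedge of spheres exists. There is no tower to split and no $k$-invariants to kill; the approach you outline would be substantially harder even if the filtration were available.
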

We highlight that this first step of the argument is a joint result with Zakharevich. We then use this to define a second, $G$-equivariant equivalence
\[ K(\Pol{X}{1}) \simeq \PT(X)^{-TX}. \]
This uses a combination of Pontryagin-Thom collapse maps and a homotopy-coherent refinement of the notion of an ``apartment'' from the theory of Steinberg modules, see e.g.~\cite{solomon_tits,lee_szczarba,church_putman_one,church_farb_putman_integrality}. Finally, we invoke a result from \cite{bgmmz}, that scissors congruence $K$-theory is a homotopy orbit spectrum.

\begin{thm}[Bohmann-Gerhardt-M-Merling-Zakharevich]\label{farrell_jones}
There is an equivalence of spectra
	\[ K(\Pol{X}{1})_{hG} \simeq K(\Pol{X}{G}) \]
	for every geometry $X$ and every subgroup $G \leq I(X)$.
\end{thm}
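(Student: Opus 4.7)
The natural approach is to exhibit $\Pol{X}{G}$ as an assembler obtained from $\Pol{X}{1}$ by adjoining the $G$-isometries as new equivalences, and then show that passing from $\Pol{X}{1}$ to $\Pol{X}{1} \rtimes G$ is absorbed by $K$-theory into the homotopy-orbit operation on spectra. First I would set up the comparison map. The group $G$ acts on the assembler $\Pol{X}{1}$ by isometry, hence it acts on the spectrum $K(\Pol{X}{1})$; in $K(\Pol{X}{G})$ those isometries are already equivalences, so the action becomes canonically trivial there. By the universal property of homotopy orbits this yields the comparison
\[ K(\Pol{X}{1})_{hG} \longrightarrow K(\Pol{X}{G}), \]
and the task is to prove this map is an equivalence of spectra.

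Second, I would identify $\Pol{X}{G}$ with a semidirect-product / action assembler $\Pol{X}{1} \rtimes G$, where objects are polytopes in $X$ but morphism sets are enlarged by formally adjoining $g \in G$ acting on polytopes. This needs a small compatibility check: the simplicial bar construction of $G$ acting on $\Pol{X}{1}$ in the category of assemblers produces, in each simplicial degree, a coproduct $\coprod_{g_1,\dots,g_n} \Pol{X}{1}$, and the face maps encode both composition in $G$ and the $G$-action on polytopes; taking the colimit recovers $\Pol{X}{G}$.

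Third, I would apply $K$-theory degreewise. For assemblers in the sense of Zakharevich, $K$ sends coproducts to wedges and is a homotopy-invariant functor that respects the simplicial structure, so $K$ applied to the bar construction becomes the bar construction $B_\bullet(S^0, G_+, K(\Pol{X}{1}))$ in spectra, whose geometric realization is $K(\Pol{X}{1})_{hG}$. Combining this with the identification in the previous step gives the required equivalence. The accompanying spectral sequence would then have $E^2_{p,q} = H_p(G;\pi_q K(\Pol{X}{1}))$ converging to $\pi_{p+q} K(\Pol{X}{G})$, a useful byproduct for the main applications.

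The main obstacle is precisely the claim that $K$-theory of assemblers commutes with this action-groupoid construction - in other words, that $K$ of a homotopy colimit of assemblers in the particular shape $BG$ agrees with the homotopy colimit of spectra $K$-applied-pointwise. The analogous statement for Waldhausen categories is Thomason's homotopy-colimit theorem and is delicate; here one needs an assembler-theoretic analogue, likely proved by checking that the relevant simplicial object of assemblers has sufficiently good structure (e.g.\ that the $G$-action on the covering families defining the Grothendieck topology is free in the appropriate sense) so that no corrections arise when passing to spectra. This technical commutation result is what gives the theorem its ``Farrell-Jones'' flavor and is the content the companion paper \cite{bgmmz} would need to supply.
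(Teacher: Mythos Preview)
This theorem is not proved in the present paper at all: it is stated as \autoref{farrell_jones} and immediately attributed to the companion paper \cite{bgmmz}. The only places it appears are in the introduction, where it is described as ``the main result of the companion paper,'' and in \S2.5, where it is invoked as a black box so that the rest of the paper can focus on $K(\Pol{X}{1})$ with its $G$-action. There is therefore no proof here to compare your proposal against.

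Your outline is a reasonable sketch of one possible strategy, and you correctly isolate the crux: the claim that $K$-theory of assemblers commutes with the $BG$-shaped homotopy colimit (your ``Thomason-style'' step). Everything before that step is essentially formal, and you rightly flag that this commutation is the substantive content that \cite{bgmmz} must supply. One caution: the name ``Farrell--Jones isomorphism'' suggests that the actual argument in \cite{bgmmz} may proceed via an assembly-map framework---constructing an explicit comparison map and proving it is an equivalence, possibly using geometric input specific to polytopes in neat geometries---rather than by a purely categorical Thomason-type theorem for arbitrary group actions on assemblers. Your sketch would, if it worked, prove something more general (for any $G$-assembler), and such generality is not always available; the polytope-specific geometry may be doing real work in \cite{bgmmz}. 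Without access to that paper one cannot say for certain, but you should not assume the two routes coincide.
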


Combining these together gives \autoref{intro_main}. The reduced spherical case (\autoref{intro_reduced}) follows a similar but more elaborate argument.

We note once more a conceptual relationship to the higher $K$-theory of fields: the above two theorems imply that scissors congruence $K$-theory is the homotopy orbits of a wedge of sphere spectra. The same was recently proven about the stable rank filtration of higher $K$-theory of fields from \cite{rognes_rank}, partially resolving a longstanding conjecture by Rognes \cite{miller_patzt_wilson}.

\begin{rmk}\label{automorphisms}
	Since the first version of this paper was posted, it has become apparent that \autoref{intro_dim_one} is closely related to the homology of the group of interval exchange transformations, which was computed more recently in \cite{tanner}, using \cite{li,szymik_wahl}. In particular, Robin Sroka pointed out a striking similarity between \autoref{intro_dim_one} and \cite[Lem 5.6]{tanner}. In general, the higher scissors congruence groups are closely related to the group of ``scissors automorphisms,'' that is, self-maps of a polytope that cut it into pieces and then rearrange the pieces using elements of the group $G$. See the more recent paper \cite{KLMMS-1} for details.
\end{rmk}

\begin{rmk}\label{rt_building}
	The space $\ST(X)$ agrees with the RT-building $F^X_{\sbt}$ used by Campbell and Zakharevich \cite{cz} in their refinement of Goncharov's conjecture concerning the Dehn complex and motivic cohomology \cite{goncharov,rudenko}. Campbell and Zakharevich use an isomorphism at the level of $K_0$
	\[ K_0(\Pol{X}{G}) \cong H_{n+1}(\ST(X)^{\sigma}_{hG}), \]
	where $\sigma$ is the sign representation, and we reduce $K$-theory in the spherical case. Our results imply that the higher homology of their space $\ST(X)^{\sigma}_{hG}$ also captures the higher $K$-groups, at least rationally. Therefore the construction in \cite[\S 2]{cz} can be used to extend the classical Dehn invariant to the higher scissors congruence groups. Details of this construction will appear in future work.
\end{rmk}

\subsection{Acknowledgements}
The author is pleased to acknowledge that \autoref{intro_st} is joint work with Inna Zakharevich, and to thank Inna for her support in incorporating it into the current paper. He also thanks Inna for conversations, insights, and helpful references on scissors congruence, without which none of the other results would be possible. The author would furthermore like to thank Laura Anderson and Michael Dobbins for helpful conversations about polytopes, and Alexander Kupers, Jeremy Miller, Peter Patzt, and Robin Sroka for enlightening conversations about the connection to the work of Quillen and Rognes. Finally, he thanks Andrew Blumberg, Anna Marie Bohmann, Jonathan Campbell, Teena Gerhardt, Michael Hill, Michael Mandell, Mona Merling, and Kate Ponto for conversations about trace maps for combinatorial forms of $K$-theory, insights from which inspired the current project. The author was supported by the NSF grants DMS-2005524 and DMS-2052923.

\section{Preliminaries}

\subsection{Geometries and polytopes}\label{sec:polytopes}
For each integer $n \geq 0$, let $H^n$, $E^n$, and $S^n$ be defined as subspaces of $\R^{n+1}$:
\begin{align*}
	H^n &= \left\{ \ (x_0,\ldots,x_n) \ : \ -x_0^2 + \sum_{i=1}^n x_i^2 = -1, \ x_0 > 0 \ \right\} \\
	E^n &= \Big\{ \ (x_0,\ldots,x_n) \ : \ x_0 = 1 \ \Big\} \\
	S^n &= \left\{ \ (x_0,\ldots,x_n) \ : \ x_0^2 + \sum_{i=1}^n x_i^2 = 1 \ \right\}
\end{align*}
If $X$ is one of these geometries, the isometry group $I(X)$ is the subgroup of $\GL_{n+1}(\R)$ that preserves
\begin{itemize}
	\item The form $-x_0^2 + \sum_{i=1}^n x_i^2$ and the sign of $x_0$, when $X = H^n$, \\[-.8em]
	\item Both $x_0$ and the form $\sum_{i=1}^n x_i^2$ along the subspace $x_0 = 0$, when $X = E^n$, and \\[-.8em]
	\item The form $\sum_{i=0}^n x_i^2$, when $X = S^n$.
\end{itemize}
We denote these groups $H(n)$, $E(n)$, and $O(n+1)$, respectively. For the sphere it is often convenient to replace $\R^{n+1}$ by an arbitrary inner product space $W$, denoting the unit sphere $S(W)$ and its isometry group $O(W)$.

The canonical map $E(n) \to O(n)$ restricts to the subspace $x_0 = 0$. Its kernel is the translation subgroup $T(n) \leq E(n)$, which is isomorphic to $\R^n$ as an abelian group, and we get a split short exact sequence
\[ \xymatrix{
	0 \ar[r] & T(n) \ar[r] & E(n) \ar[r] & O(n) \ar[r] & 0
} \]
making $E(n)$ into a semidirect product $T(n) \rtimes O(n)$.

Let $X$ be any of the above geometries. A {\bf (geometric) subspace} $U \subseteq X$ is a subset obtained by intersecting with a linear subspace of $\R^{n+1}$. If different linear subspaces have the same intersection with $X$ then we consider them to be the same geometric subspace. 

A {\bf (geometric) simplex} $\Delta^k \subseteq X$ is obtained from any $(k+1)$ distinct points in $X$ in general position (not all lying in a $(k-1)$-dimensional subspace) by taking the convex hull in $\R^{n+1}$, then projecting from the origin back to $X$. In the spherical case it is necessary to also assume that this convex hull does not contain the origin, in other words the $(k+1)$ points all lie in an open hemisphere. Note that we can move any of these $(k+1)$ points along the ray connecting it to the origin, before taking the convex hull, and it does not change the resulting subset of $X$. 

More generally, the {\bf join} $P * Q$ of two subsets $P,Q \subseteq X$ is the quotient of $P \times Q \times I$ by the relations
\[ (p,q,0) \sim (p',q,0), \qquad (p,q,1) \sim (p,q',1) \qquad \forall p,p' \in P, q,q' \in Q. \]
We regard this as a subspace of $X$ by taking $(p,q,t)$ to the point $(1-t)p + tq \in \R^{n+1}$ and then projecting to $X$. We only consider this to be well-defined if the resulting map $P * Q \to X$ is injective, and in the spherical case if every pair $(p,q)$ lies in an open hemisphere. For example, a $k$-simplex is a join of $(k+1)$ points in general position, while if $W = V \oplus V^\perp$ then the entire sphere $S(W)$ is the join of the subspaces $S(V)$ and $S(V^\perp)$.

A {\bf polytope} $P \subseteq X$ is any subset that can be expressed as a finite union of geometric simplices of dimension $n$. So $P$ is \emph{not} required to be convex, but it does need to be top-dimensional. A 1-simplex in $E^2$ is not considered to be a polytope, but a 2-simplex is. 

If $P \subseteq X$ is any finite union of simplices, not necessarily top-dimensional, its {\bf span} is the smallest geometric subspace $U$ containing $P$. The corresponding linear subspace of $\R^{n+1}$ is just the span of the points in $P$.

A {\bf weak subdivision} or {\bf almost-disjoint cover} of a polytope $P \subseteq X$ is a finite collection of polytopes $P_i \subseteq X$, whose union is $P$, and whose interiors are disjoint. We think of these as a generalization of linear subdivision, where the pieces of the subdivision do not have to actually form a simplicial or polytopal complex, i.e. the faces do not have to line up. 

\begin{lem}\label{common_refinement}
	Any two weak subdivisions have a common refinement.
\end{lem}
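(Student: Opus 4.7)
The plan is to produce the common refinement by intersecting simplices pairwise. First I would refine each polytope in the two subdivisions into an actual simplicial complex: for each $P_i$ in the first weak subdivision, find a finite collection of $n$-simplices $\{\sigma_\alpha^i\}$ with pairwise disjoint interiors whose union is $P_i$, and similarly refine each $Q_j$ into simplices $\{\tau_\beta^j\}$. Such refinements exist because $P_i$ is by definition a finite union of $n$-simplices, and standard polyhedral subdivision techniques (working in a chart in the hyperbolic and spherical cases, where the simplices in question lie in a convex subset of a Euclidean model) allow one to mutually subdivide overlapping simplices into a simplicial complex.

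Next, for each quadruple $(i,j,\alpha,\beta)$ consider $R_{ij\alpha\beta} := \sigma_\alpha^i \cap \tau_\beta^j$. Since geometric simplices are compact and geodesically convex, $R_{ij\alpha\beta}$ is compact and convex, and is either of dimension strictly less than $n$ or is a convex $n$-dimensional region; in the second case it can itself be triangulated into finitely many $n$-simplices, so it is a polytope in the sense of \autoref{sec:polytopes}.

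I then verify that the collection of top-dimensional $R_{ij\alpha\beta}$ is a weak subdivision of $P$. The union of all intersections (top and lower dimensional) equals $(\bigcup_i P_i) \cap (\bigcup_j Q_j) = P$, and any lower-dimensional piece must lie in the closure of top-dimensional pieces because $P$ is the closure of its interior and a finite union of closed sets of dimension less than $n$ cannot fill an open neighborhood of a top-dimensional point. Disjointness of interiors follows from $\operatorname{int}(\sigma_\alpha^i \cap \tau_\beta^j) \subseteq \operatorname{int}(\sigma_\alpha^i) \cap \operatorname{int}(\tau_\beta^j)$, together with disjointness of the $\operatorname{int}(\sigma_\alpha^i)$ across distinct $(i,\alpha)$ and of the $\operatorname{int}(\tau_\beta^j)$ across distinct $(j,\beta)$. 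Each piece $R_{ij\alpha\beta}$ lies in some $P_i$ and in some $Q_j$, so grouping pieces by their containing $P_i$ (respectively $Q_j$) exhibits the collection as a common refinement of both subdivisions.

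The main obstacle is the initial step of refining each $P_i$ into a genuine simplicial complex. Conceptually, all one needs is that the intersection of two convex simplices is convex and that convex $n$-polytopes admit triangulations, both classical in the Euclidean case. In the hyperbolic and spherical cases, however, one must work in a model (e.g.\ the Klein model for $H^n$, or an open hemisphere chart for $S^n$) in which the simplices of interest correspond to Euclidean convex polytopes, and verify that the resulting triangulation transfers back to $X$; after that, the remainder of the argument is bookkeeping.
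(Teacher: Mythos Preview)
Your approach is correct and shares the paper's core idea: form the common refinement by pairwise intersection. The paper's proof, however, is a single sentence---it intersects the polytopes $P_i \cap Q_j$ directly, without first passing to triangulations of each piece. Your preliminary triangulation step and the discussion of charts for the hyperbolic and spherical cases are unnecessary overhead: in this paper, simplices and polytopes are defined via linear subspaces and convex hulls in the ambient $\R^{n+1}$, so intersections behave uniformly across all three geometries, and the intersection of two polytopes (after discarding any lower-dimensional residue, as you note) is again a polytope. Your more careful treatment does address a minor technicality the paper glosses over---that $P_i \cap Q_j$ could in principle have lower-dimensional whiskers---but the fix is exactly the one you give, and the paper evidently regards it as routine.
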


\begin{proof}
	Given two covers $\coprod_i P_i \to P$ and $\coprod_j Q_j \to P$, the pairwise intersections $P_i \cap Q_j$ also form a cover. Throwing out the intersections that are less than top-dimensional gives a cover by top-dimensional polytopes.
\end{proof}

A {\bf (geometric) triangulation} of $P$ is the structure of a simplicial complex on $P$, all simplices being geometric in $X$.
\begin{prop}\label{triangulation_exists}
	Each polytope $P$ has a geometric triangulation. For any finite collection of polytopes $\{ P_i \}$, the union $\cup_i P_i$ can be triangulated so that each $P_i$ is a subcomplex.
\end{prop}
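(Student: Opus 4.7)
The plan is to subdivide into a simplicial complex using a finite hyperplane arrangement, in a way that works uniformly in the Euclidean, hyperbolic, and spherical cases. Since each polytope is by definition a finite union of top-dimensional geometric simplices, it suffices to prove the second assertion for a finite collection $\{\Delta_\alpha\}$ of $n$-simplices whose union equals $\cup_i P_i$: once $\cup_\alpha \Delta_\alpha$ is triangulated with each $\Delta_\alpha$ a subcomplex, each $P_i = \cup_j \Delta_{i,j}$ is automatically a subcomplex, and the single-polytope claim follows by specialization. For each $\Delta_\alpha$, each of its $n+1$ facets spans a unique codimension-one geometric subspace of $X$; I collect all such ``hyperplanes'' into a finite set $\mathcal{H}$.

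Next, I refine each $\Delta_\alpha$ by the arrangement $\mathcal{H}$. Each $H \in \mathcal{H}$ corresponds via the models in \autoref{sec:polytopes} to a codimension-one linear subspace of $\R^{n+1}$, hence to a choice of sign of a defining linear functional. Intersecting each $\Delta_\alpha$ with every possible combination of closed half-spaces cut out by $\mathcal{H}$ produces finitely many convex cells, each contained in $\Delta_\alpha$, whose codimension-one faces lie on hyperplanes of $\mathcal{H}$. The totality of these cells forms a polyhedral complex on $\cup_\alpha \Delta_\alpha$: two cells meet along a common face, precisely because membership in each cell is determined by a consistent collection of half-space signs across $\mathcal{H}$. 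By construction, every $\Delta_\alpha$, and hence every $P_i$, is a union of cells in this complex.

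Finally, I triangulate the resulting polyhedral complex by induction on skeleta. For each new cell $C$ of dimension $k$, the boundary $\partial C$ is already triangulated; I pick an interior point $v_C \in C$ and form the cone $v_C * \partial C$, which is a geometric simplicial subdivision of $C$ because $v_C$ joined in $\R^{n+1}$ with any face $\sigma$ of $\partial C$ projects into $C$ by convexity. The resulting simplicial complex is a geometric triangulation of $\cup_i P_i$ in which each $P_i$ appears as a subcomplex.

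The main obstacle is the spherical case: a great subsphere $H \in \mathcal{H}$ does not globally separate $S^n$ into two hemispheres, and $\cup_i P_i$ need not lie in any single open hemisphere. The fix is to perform the refinement one simplex at a time. Each $\Delta_\alpha$ \emph{does} lie in an open hemisphere by the definition of a geometric simplex in \autoref{sec:polytopes}, so within that hemisphere each $H$ cleanly separates $\Delta_\alpha$ into two convex pieces, and the coning step also stays in the hemisphere. With this local-on-$\Delta_\alpha$ formulation, the Euclidean, hyperbolic, and spherical arguments coincide, and the separation of concerns between ``produce a polyhedral complex'' and ``triangulate a polyhedral complex'' keeps the combinatorics manageable.
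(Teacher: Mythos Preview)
Your proof is correct and follows essentially the same route as the paper's: collect the hyperplanes spanned by the facets of the constituent simplices, cut into convex cells by intersecting half-spaces, then triangulate the resulting polyhedral complex by coning from interior points inductively on dimension. One small correction: a codimension-one great subsphere \emph{does} separate $S^n$ into two closed hemispheres, so your stated obstacle is not quite right; the genuine spherical subtlety (which your local-on-$\Delta_\alpha$ fix correctly handles, and which the paper's ``bounded'' restriction is implicitly addressing) is that a \emph{global} region of the arrangement may fail to lie in any open hemisphere, so that coning could produce degenerate simplices.
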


\begin{proof}
	For each $i$, take a finite list of simplices whose union is $P_i$. Each face of each of these simplices determines two closed half-spaces in $X$, one pointing to the interior of the simplex and one pointing away. Take all possible intersections of the half-spaces arising in this way, including intersections of a half-space and the closure of its own complement, which gives a hyperplane. The resulting intersections give a collection of convex polyhedra $\{Q_j\}$ in $X$ of varying dimensions, preserved under taking faces.
	
	We restrict $\{Q_j\}$ to the polyhedra that are bounded, in other words the polytopes that are not necessarily top-dimensional, and make their union into a simplicial complex. For each bounded polyhedron $Q_j$, if its boundary is triangulated, we can extend the triangulation to $Q_j$ by taking the join with one fixed point in the interior. By induction, this gives a triangulation of the union $\cup_j Q_j$ over all the bounded polyhedra in the collection. By construction, it contains each $P_i$ as a subcomplex.
\end{proof}

\begin{rmk}\label{triangulation_common_refinement}
	\autoref{triangulation_exists} implies that any two geometric triangulations of a polytope have a common refinement. (Take $P_i$ to run over all the simplices in both triangulations.) This is not surprising, since in the Euclidean case the triangulations both present the same PL structure on $\R^n$, and so one expects them to be compatible under subdivision. Since the Hauptvermutung is known to be false, if we allow non-geometric triangulations, in other words arbitrary homeomorphisms to simplicial complexes, then common refinements do not exist in general. See e.g. \cite{haupt_survey}.
\end{rmk}

If $V \subseteq W$ is an inclusion of inner product spaces with orthogonal complement $V^\perp$, and $P \subseteq S(V)$ is any polytope, the {\bf suspension} of $P$ is the join $P * S(V^\perp)$. This is always a polytope in $S(W)$.

Finally, the {\bf empty geometry} is the empty set $\emptyset$. It has a unique polytope, the empty polytope $\emptyset \subseteq \emptyset$.

\subsection{The tuple, polytope, and Steinberg complexes}\label{sec:complexes}

In this subsection we describe the base spaces for our Thom spectra in several equivalent ways.

Let $X$ be any geometry. Following \cite[Def A.1]{cz}, let $\Tpl(X)$ be the simplicial complex in which a $k$-simplex is any $(k+1)$ points in $X$, not necessarily in general position, but lying in an open hemisphere in the spherical case. Let $\ti\Tpl(X)$ be the same complex except that, in the spherical case, the points do not have to lie in an open hemisphere. In \cite{dupont_book} these correspond to the chain complexes $C_.(X)$ and $\overline C_.(X)$, respectively.

\begin{lem}\label{tup_homotopy_type}
	The canonical map $\Tpl(X) \to X$ is an equivalence, and $\ti\Tpl(X)$ is contractible.
\end{lem}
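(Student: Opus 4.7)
The plan is to handle the two statements separately.

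For the contractibility of $\ti\Tpl(X)$: by definition every non-empty finite subset of $X$ is a simplex of $\ti\Tpl(X)$, so $\ti\Tpl(X)$ is the full simplicial complex on the vertex set $X$. I would contract it to any chosen basepoint $v_0 \in X$ by the standard cone homotopy: on each simplex $\sigma$, the union $\sigma \cup \{v_0\}$ is again a simplex of $\ti\Tpl(X)$, and one moves each point of $|\sigma|$ linearly toward $v_0$ through the larger simplex $|\sigma \cup \{v_0\}|$. These maps glue consistently along face identifications to give a continuous deformation of $|\ti\Tpl(X)|$ onto $v_0$.

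For the equivalence $\Tpl(X) \to X$, I would split into cases. In the Euclidean or hyperbolic case the hemisphere restriction is vacuous, so $\Tpl(X) = \ti\Tpl(X)$ is contractible by the previous paragraph, and $X$ is itself contractible, so the map is a weak equivalence for trivial reasons. In the spherical case $X = S(W)$, I would invoke the nerve lemma applied to the cover of $S(W)$ by all open hemispheres $U_v = \{ x : \langle x, v \rangle > 0 \}$ indexed by $v \in S(W)$. Each $U_v$ is contractible, and any non-empty finite intersection is a geodesically convex open subset of $S(W)$, hence contractible. The nerve has $\{v_0, \ldots, v_k\}$ as a simplex precisely when there exists $w$ with $\langle w, v_i \rangle > 0$ for all $i$, i.e., when the $v_i$ lie in a common open hemisphere; so the nerve coincides with $\Tpl(S(W))$, and since $S(W)$ is paracompact the nerve lemma produces the desired homotopy equivalence.

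The main technical point is identifying the nerve-lemma equivalence with the canonical map itself, which sends a point in barycentric coordinates on $\{v_0, \ldots, v_k\}$ to the radial projection of $\sum t_i v_i$ back to the sphere. I would construct an inverse to the canonical map using a partition of unity $\{\psi_v\}$ subordinate to $\{U_v\}$: send $x \in S(W)$ to the point in $|\Tpl(S(W))|$ with barycentric coordinates $\psi_v(x)$ on the simplex $\{v : \psi_v(x) > 0\}$, which is a valid simplex because all such $v$ satisfy $x \in U_v$. Standard straight-line homotopies in each simplex show this is a homotopy inverse to the canonical map, completing the proof. This partition-of-unity bookkeeping is the only real technical content of the argument and is the place I would expect to spend the most care.
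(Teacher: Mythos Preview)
Your argument for the contractibility of $\ti\Tpl(X)$ is correct and is exactly what the paper does: cone off to a chosen vertex.

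Your approach to the equivalence $\Tpl(X)\to X$ is different from the paper's. The paper simply cites simplicial approximation and Dupont's book, whereas you split into cases and, in the spherical case, use the nerve lemma for the good cover by open hemispheres. Your identification of the nerve of $\{U_v\}$ with $\Tpl(S(W))$ is correct, and the nerve lemma does give $\Tpl(S(W))\simeq S(W)$ abstractly.

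However, the final step has a genuine gap. For the composite $g\circ f\colon \Tpl(S(W))\to \Tpl(S(W))$ to be homotoped to the identity by ``straight-line homotopies in each simplex,'' you need that for every $p\in |\sigma|$ with $\sigma=\{v_0,\dots,v_k\}$ and image $x=f(p)$, the simplex $\sigma\cup\tau$ (where $\tau=\{v:\psi_v(x)>0\}$) is again a simplex of $\Tpl(S(W))$, i.e.\ all its vertices share an open hemisphere. This can fail: take $v_0,v_1\in S^2$ nearly antipodal but in a common hemisphere, and $p$ close to $v_1$, so $x\approx v_1$; then $v_0\notin U_x$, and nothing prevents the partition of unity from putting a vertex near $-v_0$ into $\tau$, whence $\sigma\cup\tau$ contains near-antipodal points. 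So for a generic partition of unity the straight line is not defined in $|\Tpl(S(W))|$.

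An easy fix: the canonical map has a continuous section $s\colon S(W)\to \Tpl(S(W))$ sending each point to itself as a $0$-simplex, with $f\circ s=\mathrm{id}$. Once the nerve lemma tells you $\Tpl(S(W))\simeq S^n$, any self-map of an $S^n$-type space admitting a one-sided inverse is a homotopy equivalence (compose with a chosen equivalence to $S^n$ and use that a degree-one map is an equivalence). Alternatively, choose the partition of unity with supports small enough that each $\tau$ lies in a small neighborhood of $x$, and argue more carefully; but the section argument is cleaner.
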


\begin{proof}
	The first claim is a consequence of simplicial approximation, see Lemma 3.6 and (3.9) in \cite{dupont_book} for the homology version. The contractibility of $\ti\Tpl(X)$ is clear because it can be coned off to any one of its points. 
\end{proof}

When $X$ is $n$-dimensional, let $\Tpl(X)^{n-1} \subseteq \Tpl(X)$ denote the subcomplex of those tuples that lie in some $(n-1)$-dimensional subspace, and similarly for $\ti\Tpl(X)^{n-1}$. This is identified with the union of the subcomplexes $\Tpl(U)$ over all proper subspaces $U \subsetneq X$. Even when $\Tpl(X)$ is contractible, this subcomplex is not, so the quotient $\Tpl(X)/\Tpl(X)^{n-1}$ has an interesting homotopy type.

We next describe $\Tpl(X)^{n-1}$ as a homotopy colimit. Recall that for any small category $\bI$ and any diagram of spaces $F\colon \bI \to \cat{Top}$, there is an unbased homotopy colimit
\[ \underset{\bI}\uhocolim F \]
defined using the Bousfield-Kan formula. (See \autoref{sec:hocolim} for more details.) We use the ``$u$'' decoration to distinguish this from the more common based homotopy colimit, defined from this unbased one by identifying the homotopy colimit of all the basepoints to a single point.

%

\begin{df}\label{tcofib}
Suppose $\bI$ has a terminal object $* \in \bI$. We define the {\bf total homotopy cofiber} of $F$ to be the quotient
\[ \tcofib(F) := \left( \underset{\bI}\uhocolim\, F \right) \Big/ \left( \underset{\bI \setminus \{*\}}\uhocolim\, F \right). \]
This has the homotopy type of a homotopy cofiber, provided the spaces $F(i)$ are well-based.
\end{df}


\begin{ex}
	Suppose $\bI$ is an $n$-dimensional cube, i.e. the poset of subsets of a fixed set $\{0,\ldots,n-1\}$, but we use the convention that $S \to T$ when $T \subseteq S$. Then our definition of the total homotopy cofiber matches the one from functor calculus \cite{calc2,calc3,munson_volic}. In particular, it is homeomorphic to the iterated homotopy cofibers (mapping cones) taken along each direction of the cube.
\end{ex}

%
%
%
	
\begin{ex}\label{subcomplex_hocolim}
	Suppose $K$ is a cell complex, and $\bI$ is a poset of subcomplexes $K(i) \subseteq K$, closed under intersections, and terminating at $K$ itself. In this case the canonical map
	\[ \xymatrix{
		\underset{i \in \bI \setminus \{*\}}\uhocolim\, K(i) \ar[r] & \displaystyle\bigcup_{i \in \bI \setminus \{*\}} K(i)
	} \]
	is an equivalence, and therefore the total homotopy cofiber is equivalent to the quotient of $K$ by the union of the proper subcomplexes in the diagram:
	\[ K \Big/ \left( \bigcup_{i \in \bI \setminus \{*\}} K(i) \right). \]
\end{ex}

In other words, the quotient $\Tpl(X)/\Tpl(X)^{n-1}$ is a total homotopy cofiber.

\begin{df}\label{PT}
Let $X$ be any geometry and let $\bI$ be the poset of nonempty geometric subspaces $\emptyset \subsetneq U \subseteq X$, including $X$ itself. We define two diagrams on $\bI$:
\begin{itemize}
	\item A tautological diagram whose value at $U$ is the space $U$ itself. We define the {\bf polytopal Tits complex} 
	$\PT(X)$ to be its total homotopy cofiber:
	\begin{align*}
		\PT(X) &= \left( \underset{\emptyset \subsetneq U \subseteq X}\uhocolim\, U \right) \Big/ \left( \underset{\emptyset \subsetneq U \subsetneq X}\uhocolim\, U \right) \\
		&\simeq \left( \underset{\emptyset \subsetneq U \subseteq X}\uhocolim\, \Tpl(U) \right) \Big/ \left( \underset{\emptyset \subsetneq U \subsetneq X}\uhocolim\, \Tpl(U) \right) \\[0.4em]
		&\simeq \Tpl(X)/\Tpl(X)^{n-1}.
	\end{align*}
	\item A constant diagram whose value at $U$ is the one-point space $*$. We define the {\bf Steinberg complex} $\ST(X)$ to be its total homotopy cofiber:
	\begin{align*}
		\ST(X) &= \left( \underset{\emptyset \subsetneq U \subseteq X}\uhocolim\, {*} \right) \Big/ \left( \underset{\emptyset \subsetneq U \subsetneq X}\uhocolim\, {*} \right) \\
		&\simeq \left( \underset{\emptyset \subsetneq U \subseteq X}\uhocolim\, \ti\Tpl(U) \right) \Big/ \left( \underset{\emptyset \subsetneq U \subsetneq X}\uhocolim\, \ti\Tpl(U) \right) \\[0.4em]
		&\simeq \ti\Tpl(X)/\ti\Tpl(X)^{n-1}.
	\end{align*}
\end{itemize}
\end{df}
As mentioned in the introduction, the Steinberg complex $\ST(X)$ coincides with the RT-building $F^X_{\sbt}$ of \cite[Def 1.8]{cz}. See \cite[App A]{cz} for an alternative argument for the above equivalences.

Note that the second term in this definition $\left( \underset{\emptyset \subsetneq U \subsetneq X}\uhocolim\, {*} \right)$ is the Tits complex $\T(X)$, the realization of the poset of proper nonempty subspaces of $X$.
Similarly the first term $\left( \underset{\emptyset \subsetneq U \subseteq X}\uhocolim\, {*} \right)$ is the cone on the Tits complex $\CT(X)$. Therefore
\begin{lem}
	The Steinberg complex $\ST(X)$ is homeomorphic to the unreduced suspension of the Tits complex $\T(X)$.
\end{lem}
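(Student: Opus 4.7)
The plan is to exhibit $\left( \underset{\emptyset \subsetneq U \subseteq X}\uhocolim\, {*} \right)$ as a cone on $\left( \underset{\emptyset \subsetneq U \subsetneq X}\uhocolim\, {*} \right) = \T(X)$, and then invoke the standard fact that the quotient of a cone by its base is the unreduced suspension.

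First I would recall that, under the Bousfield-Kan model referenced in \autoref{sec:hocolim}, the unbased homotopy colimit of the constant diagram at a point over a small category $\bJ$ is homeomorphic to the geometric realization $|N\bJ|$ of the nerve. Let $\bI$ denote the poset of nonempty geometric subspaces $\emptyset \subsetneq U \subseteq X$, so that the denominator in the definition of $\ST(X)$ becomes $|N(\bI \setminus \{X\})|$, which is by construction the order complex $\T(X)$; and the numerator becomes $|N\bI|$.

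Next I would exploit that $X \in \bI$ is the terminal object. Every chain in $N\bI$ either lies entirely in $\bI \setminus \{X\}$ or has $X$ as its unique maximum, so as simplicial sets
\[ N\bI \;\cong\; N(\bI \setminus \{X\}) \star \Delta^0, \]
where $\star$ denotes the simplicial join. Passing to geometric realizations identifies $|N\bI|$ with the unreduced cone $C\T(X)$, with $\T(X)$ sitting as the base. Crucially, the canonical map $\underset{\emptyset \subsetneq U \subsetneq X}\uhocolim\, {*} \to \underset{\emptyset \subsetneq U \subseteq X}\uhocolim\, {*}$ induced by including the sub-diagram into the full diagram is the map that sends each chain in $\bI\setminus\{X\}$ to the same chain in $\bI$, and therefore matches the inclusion of the base $\T(X) \hookrightarrow C\T(X)$ on the nose.

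Finally, the quotient $C\T(X)/\T(X)$ collapses the base $\T(X)\times\{0\}$ of the mapping cylinder $\T(X)\times I / \T(X)\times\{1\}$ to a point, producing a second cone point distinct from the apex; the result is precisely the unreduced suspension $S\T(X)$, giving the claimed homeomorphism.

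I do not foresee a serious obstacle. The only detail requiring care is that the chosen model of $\uhocolim$ actually returns the nerve (on the nose, not merely up to equivalence) for the constant diagram at a point, which is guaranteed by the Bousfield-Kan two-sided bar construction; with a different model one would obtain at worst a natural homeomorphism through a canonical comparison, which would upgrade the statement to a natural equivalence without affecting subsequent arguments.
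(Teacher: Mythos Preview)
Your proposal is correct and follows essentially the same approach as the paper: identify the numerator as the cone $\CT(X)$ on the Tits complex (since $X$ is terminal in the poset), identify the denominator as $\T(X)$, and observe that the quotient of a cone by its base is the unreduced suspension. The paper states these identifications in the sentences immediately preceding the lemma and then concludes; your write-up simply supplies more of the routine details (the join description of the nerve and the explicit check that the inclusion of homotopy colimits matches the base inclusion).
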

This explains the convenient notation ``$\ST$'' for the Steinberg complex.

The collapse of each subspace $U$ to a point, or the inclusion of $\Tpl$ into $\ti\Tpl$, induces a canonical map
\[ \PT(X) \to \ST(X). \]
When $X = E^n$ or $H^n$, this is an equivalence, because every subspace $U$ is contractible, and homotopy colimits preserve all equivalences of diagrams.

The following is a variant of the Solomon-Tits theorem \cite{solomon_tits}.
\begin{thm}\label{our_st}
	$\PT(X)$ and $\ST(X)$ are each equivalent to a wedge of $n$-spheres, where $n$ is the dimension of the geometry $X$.
\end{thm}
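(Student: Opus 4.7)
The plan is to reduce both claims to a Solomon--Tits-type statement for the Tits complex $\T(X)$, and then separately handle the extra subtlety in the spherical case of $\PT$. By the preceding lemma, $\ST(X) = \Sigma \T(X)$ (unreduced suspension), so if I can show $\T(X) \simeq \bigvee S^{n-1}$, then $\ST(X) \simeq \bigvee S^n$ follows at once. Moreover, in the Euclidean and hyperbolic cases, every subspace $U \subseteq X$ is contractible, so the two homotopy colimit diagrams defining $\PT(X)$ and $\ST(X)$ are objectwise equivalent. Since unbased homotopy colimits preserve objectwise equivalences, $\PT(X) \simeq \ST(X)$ in these cases, which reduces them to the Solomon--Tits statement.

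For the Solomon--Tits statement about $\T(X)$, the spherical case is essentially immediate: proper subspaces $S(V) \subsetneq S(W)$ correspond bijectively with proper nonzero linear subspaces $V \subsetneq W$, so $\T(S(W))$ is exactly the classical type-$A_n$ Tits building of $W \cong \R^{n+1}$, whence it is a wedge of $(n-1)$-spheres by the classical Solomon--Tits theorem. The hyperbolic case is handled analogously since geometric subspaces again correspond to suitable linear subspaces of $\R^{n+1}$. For the Euclidean case, proper subspaces correspond to affine subspaces, equivalently to linear subspaces of $\R^{n+1}$ not contained in the hyperplane $\{x_0=0\}$; I would induct on $n$, fix a point $p \in E^n$, observe that the subposet of subspaces containing $p$ is cone-like (hence has contractible nerve), and use the join-with-$p$ operation to build a deformation retract identifying the remaining contribution with the suspended Tits complex of a hyperplane, closing the induction.

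For the spherical case of $\PT(X) = \PT(S(W))$, the map $\PT(X)\to\ST(X)$ is not an equivalence since each $S(V)$ is itself a sphere rather than a point. I would induct on $n$, writing $\PT(S(W))$ as the cofiber $F_n/F_{n-1}$ where $F_n = \underset{V}\uhocolim\, S(V) \simeq S(W)$ and $F_{n-1} = \underset{V\subsetneq W}\uhocolim\, S(V)$, and then filter $F_{n-1}$ by the dimension of $V$. Using the argument of \autoref{subcomplex_hocolim}, the filtration subquotients assemble from the pieces $\PT(S(V))$ for proper $V$, which are wedges of $(\dim V{-}1)$-spheres by induction. Combining this with the $S(W)$ on top, Mayer--Vietoris (or a direct cell count using the equivalence $\ST(S(W))\simeq\bigvee S^n$ as a reference) shows that the only surviving homotopy lies in degree $n$, and since $\PT(S(W))$ is simply connected for $n\geq 2$ we conclude it is a wedge of $n$-spheres.

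The main obstacle I expect is the spherical $\PT$ case: the filtration subquotients interact non-trivially because subspaces $S(V)$ can share common subspheres, so the wedge decomposition at each filtration level is only approximate and requires careful bookkeeping of attaching maps. A cleaner alternative, which I would consider as a backup, is to build an explicit CW model for $\PT(S(W))$ using top-dimensional simplices of $\Tpl(S(W))$ whose $(n+1)$ vertices span $W$, then argue that the attaching maps into the collapsed $\Tpl(S(W))^{n-1}$ are null, producing the wedge decomposition directly.
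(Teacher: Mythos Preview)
Your approach is workable in outline but takes a substantially longer route than the paper, and a couple of your steps would need real work to make rigorous. The paper gives a uniform two-line argument that avoids case analysis entirely: from the first description of $\PT(X)$ (the homotopy-colimit model), one sees a CW structure with all cells of dimension $\leq n$, so $H_n$ is free; from the last description $\Tpl(X)/\Tpl(X)^{n-1}$, one sees a CW structure with all cells of dimension $\geq n$. For $n\geq 2$ this forces simple connectivity and reduced homology concentrated (and free) in degree $n$, so Hurewicz plus Whitehead gives a wedge of $n$-spheres. The same argument applies verbatim to $\ST(X)$ via $\ti\Tpl(X)/\ti\Tpl(X)^{n-1}$. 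No appeal to the classical Solomon--Tits theorem is needed, and the spherical $\PT$ case requires no separate treatment.

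By contrast, your plan imports the classical Solomon--Tits theorem for the spherical Tits building, then has to re-prove an analogue for the affine and hyperbolic Tits complexes. Your Euclidean sketch (cone off subspaces through a point, then ``join-with-$p$'' to reduce to a hyperplane) is plausible but is not obviously a deformation retract at the level of posets; making this precise is essentially re-deriving a Solomon--Tits-type result for affine flags, which is more than you need here. Your spherical $\PT$ induction likewise has the bookkeeping problem you identify: the filtration subquotients do not split cleanly because lower-dimensional subspheres are shared. Your backup idea---use the $\Tpl$ model and look at cells---is in fact the right instinct, but you should push it further: rather than arguing attaching maps are null, simply observe that $\Tpl(X)/\Tpl(X)^{n-1}$ has no cells below dimension $n$, which combined with the upper bound from the other model finishes the proof immediately.
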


This is a variant of known results, for instance the chain complex version that can be found in \cite[Thm 3.5 and Thm 3.10]{dupont_book}.

\begin{proof}
	From the first model of $\PT(X)$ it is easy to check that it is a cell complex with all cells of dimension $\leq n$. This makes the $n$th homology group free, and if $n \leq 1$ the proof is done. If not, we use the last model $\Tpl(X) / \Tpl(X)^{n-1}$, to see this space has the homotopy type of a complex with all cells of dimension $\geq n$. Therefore the space is simply-connected, and its reduced homology is concentrated in degree $n$ and is free. It follows using the Hurewicz theorem and Whitehead theorem that the space is equivalent to a wedge of $n$-spheres. The proof for $\ST(X)$ is the same.
\end{proof}

\begin{df}\label{pt}
The {\bf polytope module} and {\bf Steinberg module} are defined as
\begin{align*}
	\Pt(X) &:= \ti H_n(\PT(X)), \\
	\St(X) &:= \ti H_n(\ST(X)) = \ti H_{n-1}(\T(X)),
\end{align*}
as in \cite[(2.12) and (2.15)]{dupont_book}. These are free abelian groups, indexed by the spheres that show up in each wedge sum. Of course $\Pt(E^n) \cong \St(E^n)$ and $\Pt(H^n) \cong \St(H^n)$, but the canonical map $\Pt(S^n) \to \St(S^n)$ is not an isomorphism.
\end{df}

\begin{prop}\label{pt_presentation}
	$\Pt(X)$ is presented as the free abelian group on the polytopes $P \subseteq X$ modulo the relation $[P] = \sum_i [P_i]$ when the $P_i$ are a weak subdivision of $P$. Alternatively, it can be presented the same way but with $P$ and all $P_i$ required to be simplices.
\end{prop}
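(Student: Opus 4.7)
The plan is to compute $\Pt(X) = \tilde H_n(\PT(X))$ directly from the simplicial chain complex of the model $\Tpl(X)/\Tpl(X)^{n-1}$, identify the chain-level presentation with the simplex presentation, and then extend from simplices to polytopes using \autoref{triangulation_exists} and \autoref{triangulation_common_refinement}.

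Because $\Tpl(X)/\Tpl(X)^{n-1}$ has no cells in degrees below $n$, the top homology is the cokernel of $d_{n+1}\colon C_{n+1} \to C_n$ in the ordered simplicial chain complex, where $C_n$ is free abelian on ordered $(n+1)$-tuples of points spanning $X$ and $C_{n+1}$ on ordered $(n+2)$-tuples spanning $X$. Boundaries of $(n+2)$-tuples with a repeated vertex enforce that transposing two entries negates the sign, so after imposing these relations the generators become oriented $n$-simplices; fixing a global orientation of $X$ then identifies them with unoriented simplices. The remaining relations, from $(n+2)$-tuples of distinct vertices, are the alternating sums
\[
\sum_{i=0}^{n+1} (-1)^i\, [v_0,\ldots,\widehat{v_i},\ldots,v_{n+1}] = 0.
\]
The heart of the proof is to identify these with the simplicial-subdivision relations $[\Delta] = \sum_j [\Delta_j]$. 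Given $n+2$ points spanning $X$, a Radon partition $\{v_0,\ldots,v_{n+1}\} = A \sqcup B$ produces two simplicial subdivisions of $\mathrm{conv}(A \cup B)$ by coning from the unique intersection point of $\mathrm{conv}(A)$ and $\mathrm{conv}(B)$ over each side; equating these two subdivisions recovers the alternating-sum relation up to orientation signs. Conversely, every simplicial subdivision of a simplex is connected to the trivial one by a sequence of stellar subdivisions and their inverses (Alexander's theorem), and each stellar subdivision is exactly the alternating-sum relation for the $(n+2)$-tuple consisting of the subdivided simplex's vertices together with the new interior cone point.

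Granting the simplex presentation, extending to polytopes is routine: define $[P] \mapsto \sum_j [\Delta_j]$ for any triangulation of $P$, well-defined by \autoref{triangulation_common_refinement}, and compatible with weak subdivisions because \autoref{triangulation_exists} provides a common triangulation of any almost-disjoint cover $\{P_i\}$ in which each $P_i$ is a subcomplex. The inverse map is the inclusion of simplices into polytopes, which takes simplicial subdivisions to weak subdivisions. The main obstacle is the middle paragraph's equivalence of alternating-sum and simplicial-subdivision relations, and in particular establishing that stellar subdivisions connect all simplicial subdivisions. This has to be done uniformly in the Euclidean, hyperbolic, and spherical settings, using that each geometric simplex embeds as a convex region in which the Radon partition and stellar moves are geometrically realized; the spherical case additionally requires the open-hemisphere hypothesis from \autoref{sec:polytopes} to make convex hulls well defined.
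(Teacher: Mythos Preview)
The paper does not actually prove this proposition; it defers entirely to \cite[Thm 2.10]{dupont_book}, noting only that the result is deduced from the model $\Tpl(X)/\Tpl(X)^{n-1}$. Your sketch fleshes out precisely this line of argument, so at the level of strategy you are doing what the paper (and Dupont) indicates.

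Two comments on execution. First, your description of the Radon step is slightly off: the two triangulations of $\mathrm{conv}\{v_0,\ldots,v_{n+1}\}$ are the facets $[v_0,\ldots,\widehat{v_i},\ldots,v_{n+1}]$ for $i\in A$ and for $i\in B$ respectively, not cones from the Radon point; but the conclusion---that equating them recovers the alternating sum---is correct. You also need to handle the degenerate case where the $(n+2)$ points are not in general position (so some faces already lie in $\Tpl(X)^{n-1}$), though this is easier, not harder. Second, Alexander's theorem is a valid way to close the loop, but it is stated for abstract complexes, and the sequence of stellar moves it produces is a priori only combinatorial. You should observe that what you actually need is the weaker lemma that any geometric subdivision of a simplex can be reached from the trivial one by geometric stellar \emph{subdivisions} alone (no welds), since each such move inserts a vertex already present in the target triangulation and hence is geometrically realized in $X$. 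With that refinement, and the hemisphere hypothesis in the spherical case to keep convex hulls well defined, the argument goes through uniformly in all three geometries as you say.
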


This can be deduced from the model $\Tpl(X) / \Tpl(X)^{n-1}$, see \cite[Thm 2.10]{dupont_book}. In particular, the generators for simplices $[\Delta^n]$ correspond to the $n$-simplices in $\Tpl(X) / \Tpl(X)^{n-1}$.

\begin{rmk}\label{twist}
	The isomorphism of \autoref{pt_presentation} is $I(X)$-equivariant provided we use the ``twisted'' action for each group. As in \autoref{intro_twist}, this means that in the presentation of \autoref{pt_presentation}, the action of $g$ sends $[P]$ to $[gP]$. On the homology of $\PT(X) \simeq \Tpl(X) / \Tpl(X)^{n-1}$, we take the obvious action that applies $g$ to the points in each tuple, but then twist by $\pm 1$, according to whether $g$ preserves or reflects orientation. With these conventions, the isomorphism is equivariant. (See also \autoref{twist2}.)
\end{rmk}

For $X = E^n$ or $H^n$, \autoref{pt_presentation} also gives a presentation of $\St(X)$. In the spherical case, $\St(S^n)$ can be presented as $\Pt(S^n)$ modulo an image of $\Pt(S(V))$ for every linear subspace $V \subseteq W$, see \autoref{lem_pi0_exact_seq} or \cite[Thm 3.13]{dupont_book}.

\begin{notn}\label{drop_s}
	For the geometry $S(W)$ that is the unit sphere of the inner product space $W$, we usually write $\PT(W)$ instead of $\PT(S(W))$ to improve readability. Similarly for $\ST(W)$, $\Pt(W)$, and $\St(W)$.
\end{notn}

\subsection{Spectra and Borel $G$-spectra}

For most of the paper we use the simplest model of spectra, sometimes called prespectra, consisting of spaces $X_n$ and bonding maps $\Sigma X_n \to X_{n+1}$. There are many standard references for these, among them \cite{mmss,schwede_ss}.

We say that a Borel $G$-spectrum is such a spectrum $X$ together with a continuous basepoint-preserving left action of $G$ on each level $X_n$, commuting with the bonding maps. An equivalence of such spectra is a map of spectra $X \to Y$, commuting with the $G$-action, that gives an isomorphism on the stable homotopy groups. Among other things, this gives an equivalence on the homotopy orbit spectra $X_{hG} \to Y_{hG}$.

\begin{lem}\label{suspend_invertible}
	If $S^V$ is any sphere with a basepoint-preserving action by $G$, then the operations $S^V \sma -$ and $\Map_*(S^V,-)$ preserve all equivalences of Borel $G$-spectra, and are inverses up to equivalence of Borel $G$-spectra.
\end{lem}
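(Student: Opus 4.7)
The plan is to observe that the definition given for Borel $G$-spectra reduces equivariance and the stable-equivalence condition to two separate and almost independent checks: a map is an equivalence of Borel $G$-spectra iff it is $G$-equivariant and a (non-equivariant) stable equivalence of underlying prespectra. So both claims of the lemma factor into (i) a purely formal check that the two functors, applied levelwise, preserve the $G$-action structure on prespectra, and (ii) classical non-equivariant facts about smashing with, or mapping out of, a finite-dimensional sphere.

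Point (i) is automatic: on a prespectrum $X$ the levels are $(S^V \sma X)_n = S^V \sma X_n$ with the diagonal action, while $\Map_*(S^V, X)_n = \Map_*(S^V, X_n)$ with the conjugation action; in both cases the usual bonding maps are $G$-equivariant, and a $G$-equivariant map $X \to Y$ gives a $G$-equivariant map after applying either functor. For (ii), I would first prove that non-equivariantly $\pi_k(S^V \sma X) \cong \pi_{k-d}(X)$ and $\pi_k(\Map_*(S^V, X)) \cong \pi_{k+d}(X)$, where $d = \dim V$, via a direct colimit argument: in the first case, $\colim_n \pi_{n+k}(S^V \sma X_n)$ is computed using the homeomorphism $S^V \cong S^d$ and Freudenthal; in the second, the same colimit rewrites as $\colim_n \pi_{n+k+d}(X_n)$ by adjunction. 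Both identifications are natural, so a stable equivalence $X \to Y$ induces one after applying either functor.

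For the inverse statement, I would use the levelwise unit and counit of the smash--hom adjunction on pointed spaces, $\eta_X \colon X \to \Map_*(S^V, S^V \sma X)$ and $\epsilon_X \colon S^V \sma \Map_*(S^V, X) \to X$, applied levelwise to a prespectrum. Both are natural in $X$ and are built only from $S^V$ (with its $G$-action) and identities, so they are $G$-equivariant, i.e.\ maps of Borel $G$-spectra. Non-equivariantly they are the unit and counit of the $(\Sigma^d, \Omega^d)$-adjunction on prespectra, which are stable equivalences by Freudenthal (the colimit of homotopy groups stabilizes at the adjunction unit/counit). Combined with point (i) this gives equivalences of Borel $G$-spectra, as required.

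The most delicate step will be the non-equivariant verification for $\Map_*(S^V, -)$ on an arbitrary prespectrum (not necessarily an $\Omega$-spectrum): one needs to check that the levelwise mapping construction already computes the correct derived functor in the stable category, so that no fibrant replacement is needed before asserting $\pi_k(\Map_*(S^V, X)) \cong \pi_{k+d}(X)$. This is a standard colimit interchange justified by Freudenthal, but it is the one place where the argument genuinely uses that $S^V$ is a finite complex, and it is what makes the simple levelwise definitions in the lemma compatible with the stable homotopy category.
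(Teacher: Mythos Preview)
Your proposal is correct and follows essentially the same approach as the paper: reduce to the underlying non-equivariant statement, and use the unit and counit of the smash--hom adjunction for the inverse claim. The only difference is that where the paper simply cites \cite{mmss} for preservation of equivalences and asserts the unit/counit are underlying equivalences, you unpack these into explicit colimit and Freudenthal arguments; your version is more self-contained but not structurally different.
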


\begin{proof}
	The fact that they preserve equivalences follows from \cite{mmss} because $S^V$ is a cell complex. The unit and counit of the adjunction are equivalences on the underlying spectra, hence are equivalences of Borel $G$-spectra.
\end{proof}
	
At one point we will need the symmetric monoidal category of symmetric or orthogonal spectra with $G$-action. As before, we consider these up to equivariant maps that are stable equivalences on the underlying symmetric or orthogonal spectrum. There is a model category of such, obtained by taking the projective model structure on $G$-diagrams of diagram spectra, and it is Quillen equivalent to the model category of ordinary (pre-)spectra with $G$-action. This can be deduced easily from the standard theory in e.g. \cite{mmss}.

The terminology ``Borel'' serves as a reminder that we are not using a more sophisticated equivariant theory that uses fixed points or representations of $G$, as in \cite{mm02}.

\subsection{Scissors congruence spectra}

Fix a geometry $X = H^n$, $E^n$, or $S^n$. Let
\[ \mc C = \Pol{X}{1} \]
be the category with one object for every polytope $P \subseteq X$ (possibly empty), and one morphism for every inclusion $P \subseteq Q$ in $X$. In other words, it is the poset of polytopes in $X$ under inclusion.

This category is an assembler in the sense of \cite{zak_assemblers}, see also \cite{bgmmz}, and therefore has an associated $K$-theory spectrum. We recall the construction.

Following \cite{zak_assemblers}, for each based set $A$, let
\[ A \sma \mc C = \bigvee_{A \setminus \{*\}} \mc C \]
denote the $(A \setminus \{*\})$-fold wedge of the category $\mc C$ along the object $\emptyset$. Explicitly, an object in this category is a pair $(a,P)$ in which $a \in A \setminus \{*\}$ and $P$ is a nonempty polytope, and additionally we have an initial object $\emptyset$. We have one morphism $(a,P) \to (b,Q)$ if $a = b$ and $P \subseteq Q$, and no morphisms otherwise.

Let $\mathcal W(\mc C)$ be the category in which an object is a finite set $I$, possibly empty, and a tuple of nonempty polytopes $\{P_i\}$ indexed by $I$. A morphism in $\mc W(\mc C)$
\[ \xymatrix{ \{ P_i \}_{i \in I} \ar[r] & \{ Q_j \}_{j \in J} } \]
is a map of finite sets $\alpha\colon I \to J$, and maps $P_i \to Q_{\alpha(i)}$ in $\mc C$, such that for each $j \in J$, the collection of $P_i$ mapping to $Q_j$ form a weak subdivision (almost-disjoint cover) of $Q_j$.

We similarly define $\mathcal W(A \sma \mc C)$ for any set $A$ as tuples $\{ (a_i,P_i) \}_{i \in I}$, and morphisms the indexed maps in the category $A \sma \mc C$. Again, for each $(a,Q_j)$, if we take every $(a,P_i)$ mapping to it (necessarily indexed by the same $a \in A$), the polytopes $P_i$ form an almost-disjoint cover of $Q_j$.

Let $S^1_\bullet$ denote the standard simplicial circle $\Delta[1]/\partial\Delta[1]$. Let $N_\bullet$ refer to the nerve of a category.

The {\bf scissors congruence $K$-theory spectrum} is the symmetric spectrum arising from the Segal $\Gamma$-space
\[ {\bf n}_+ \mapsto \left| N_\bullet \mc W\left( {\bf n}_+ \sma \mc C \right) \right|. \]
Concretely, at spectrum level $k$ it is the realization of the multisimplicial set
\begin{equation}\label{sck_defn}
	[p,q_1,\ldots,q_k] \mapsto N_p \mc W\left( S^1_{q_1} \sma \ldots \sma S^1_{q_k} \sma \mc C \right),
\end{equation}
and the bonding maps of the spectrum arise from the identifications
\[  S^1_{q_1} \sma \ldots \sma S^1_{q_k} \cong S^1_{q_1} \sma \ldots \sma S^1_{q_k} \sma S^1_1. \]
As usual, because the $\Gamma$-space is special, this spectrum is a positive $\Omega$-spectrum and is therefore semistable (see \cite{schwede_ss}). We are therefore free to ignore the symmetric spectrum structure and treat it as an ordinary spectrum. We denote this spectrum by $K(\mc C)$ or $K(\Pol{X}{1})$.

The spectrum $K(\Pol{X}{1})$ has an action by the isometry group $I(X)$, acting on each of the polytopes in $\mc C$ that appear at each multisimplicial level. We consider this as a Borel equivariant spectrum, that is, up to equivariant maps of spectra that are equivalences on the underlying spectrum. 

We similarly define $K(\Pol{X}{G})$ by replacing $\mc C$ by the category $\Pol{X}{G}$, whose objects are polytopes and whose morphisms are elements $g \in G$ and inclusions $g(P) \subseteq Q$. The main result of \cite{bgmmz} says that this is equivalent to the homotopy orbit spectrum
\[ K(\Pol{X}{G}) \simeq K(\Pol{X}{1})_{hG}. \]
As a result, this paper focuses exclusively on the spectrum $K(\Pol{X}{1})$ with its $G$-action.

\subsection{Finite sets and Barratt-Priddy-Quillen}

If $X^0 = E^0$ is the 0-dimensional, one-point geometry, then $\Pol{X^0}{1}$ is the poset $\{ \emptyset \to \bullet \}$. The associated category of indexed polytopes $\mathcal W(\{ \emptyset \to \bullet \})$ may be identified with the category of finite sets $\Fin$. We denote the associated spectrum \eqref{sck_defn} as $K(\Fin)$. At level $k$, it is the realization of the bisimplicial set
\begin{equation}\label{fin_defn}
	[p,q] \mapsto N_p \prod_{S^k_q \setminus \{*\}} \Fin,
\end{equation}
restricted to those $S^k_q$-tuples of sets in $\Fin$ that are pairwise disjoint.

\begin{rmk}This is equivalent to the usual models of the $K$-theory of finite sets. For instance, the model described by Waldhausen in \cite[1.8]{1126} agrees with this one, except that instead of requiring the sets to be disjoint, we include a choice of coproduct for various collections of the finite sets, so that we can use these coproducts when we take face maps. Of course, if the sets happen to be disjoint, then the set-theoretic union works just fine as a model for the coproduct. This observation gives a map from \eqref{fin_defn} into Waldhausen's model. For each fixed $q$, this gives an equivalence of categories, hence an equivalence after taking realization with respect to $p$. Varying $q$, we then have a levelwise equivalence of Reedy cofibrant simplicial spaces, and hence we get an equivalence on the realizations.
\end{rmk}

Let $\Sph = \Sigma^\infty S^0$ refer to the standard model of the sphere spectrum whose $n$th level is $S^n$.

\begin{thm}[Barratt-Priddy-Quillen]\label{bpq}
	$K(\Fin)$ is equivalent to the sphere spectrum $\Sph$.
\end{thm}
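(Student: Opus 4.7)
The plan is to identify the $\Gamma$-space underlying $K(\Fin)$ as Segal's $\Gamma$-space for the permutative groupoid of finite sets and bijections, and then invoke the classical Barratt-Priddy-Quillen theorem.

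First I would unpack the morphisms in $\mathcal W({\bf n}_+ \sma \{\emptyset \to \bullet\})$. Since the only nonempty polytope in $\Pol{X^0}{1}$ is the single point $\bullet$, a weak subdivision of $\bullet$ consists of exactly one copy of $\bullet$: covering forces at least one preimage, while disjoint interiors (which coincide with the whole space in dimension $0$) force at most one. Consequently, a morphism $\{(a_i,\bullet)\}_{i \in I} \to \{(b_j,\bullet)\}_{j \in J}$ is the data of a map $\alpha\colon I \to J$ such that for each $j$, there is a unique $i$ with $\alpha(i)=j$ and $a_i = b_j$. In other words, $\alpha$ is a bijection that preserves the coloring by ${\bf n}_+ \setminus \{*\} = \{1, \ldots, n\}$.

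Next I would identify $\mathcal W({\bf n}_+ \sma \{\emptyset \to \bullet\})$ with the groupoid of $n$-colored finite sets, which is canonically isomorphic to the $n$-fold product $\Fin_{\mathrm{bij}}^n$, where $\Fin_{\mathrm{bij}}$ is the groupoid of finite sets and bijections. Under this identification, the Segal structure maps induced by the projections ${\bf n}_+ \to {\bf 1}_+$ are literal projections onto factors, so the $\Gamma$-space ${\bf n}_+ \mapsto |N_\bullet \mathcal W({\bf n}_+ \sma \{\emptyset \to \bullet\})|$ satisfies the Segal condition on the nose, and at ${\bf 1}_+$ it gives $\coprod_{m \geq 0} B\Sigma_m$.

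Finally I would observe that this is precisely the Segal $\Gamma$-space associated to the permutative category $(\Fin_{\mathrm{bij}}, \sqcup)$. The classical Barratt-Priddy-Quillen theorem then identifies its group completion as $QS^0 = \Omega^\infty \Sph$, and the multisimplicial spectrum constructed in \eqref{sck_defn} is Segal's standard infinite delooping obtained by iterated evaluation at $S^1_\bullet$. Since the $\Gamma$-space is special, Segal's theorem yields that this spectrum is an $\Omega$-spectrum above level $0$ whose infinite loop space is $QS^0$, hence equivalent to $\Sph$.

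The main obstacle, such as it is, lies in verifying that the multisimplicial model $[p, q_1, \ldots, q_k] \mapsto N_p \mathcal W(S^1_{q_1} \sma \cdots \sma S^1_{q_k} \sma \mc C)$ produces the expected spectrum rather than merely the expected infinite loop space; this is standard once the $\Gamma$-space is known to be special, but it is the place where one must appeal to Segal's machinery rather than a purely formal identification. Alternatively, one can sidestep this by invoking the comparison in the preceding remark to identify $K(\Fin)$ with Waldhausen's model from \cite{1126}, for which the Barratt-Priddy-Quillen equivalence is already recorded in the literature.
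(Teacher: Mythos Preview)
Your proposal is correct and matches the paper's approach: the paper does not prove this theorem but simply cites it as classical, having established in the preceding remark that the assembler model $K(\Fin)$ agrees with Waldhausen's model from \cite[1.8]{1126}, for which Barratt--Priddy--Quillen is already known. Your route via Segal's $\Gamma$-space for the groupoid of finite sets and bijections is an equally valid (and perhaps more direct) reduction to the classical statement, and you even flag the Waldhausen comparison as an alternative at the end --- that alternative \emph{is} the paper's argument.
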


More specifically, the equivalence can be represented by any map $\Sph \to K(\Fin)$ that picks out a point at spectrum level 0
\[ \left|N_\bullet \mc W\left( S^0_\bullet \sma \mc C \right)\right|
\cong \left|N_\bullet \mc W\left( \mc C \right)\right|
\cong \left|N_\bullet \Fin \right| \]
represented by a singleton set $I = \{i\}$.

\subsection{Reduced Thom spectra and homotopy orbits}

\begin{df}
	Suppose $Y$ is a cell complex, $E \to Y$ a vector bundle, and $A \subseteq Y$ a subcomplex. We define the reduced Thom space $\Sigma^E (Y/A)$ to be the quotient of the Thom space $\Th(E)$ by the subspace $\Th(E|_A)$. If $\xi = E - n$ is a virtual bundle then we define the {\bf reduced Thom spectrum} $\Sigma^\xi (Y/A)$ to be $\Sigma^{-n}\Sigma^E (Y/A)$.
\end{df}

\begin{ex}\label{bundle_on_pt}
The space
\[ \left( \underset{\emptyset \subsetneq U \subseteq X}\uhocolim\, U \right) \simeq \Tpl(X) \]
has a canonical map to $X$. We define a virtual vector bundle on this space by pulling back the negative of the tangent bundle $-TX$. This passes to a reduced Thom spectrum on the quotient space $\PT(X)$ from \autoref{PT}:
\[ \PT(X)^{-TX} = \Sigma^{-TX}\PT(X). \]
\end{ex}

\begin{rmk}\label{twist2}
	The polytope module with twisted action $\Pt(X)^t$ from \autoref{intro_twist} and \autoref{twist} arises naturally as the homology of this Thom spectrum:
	\[ \Pt(X)^t = \ti H_0(\PT(X)^{-TX}). \]
	By the Thom isomorphism, this agrees with $\Pt(X) \cong \ti H_n(\PT(X))$. But now any orientation-reversing map flips the orientation of the bundle $-TX$, and therefore flips the sign on homology of the Thom spectrum, giving the required twist.
\end{rmk}

\begin{lem}
	If a group $G$ acts $Y$, $A$, and $E$ commuting with the maps $A \to Y \leftarrow E$, the reduced Thom space $\Sigma^E Y/A$ inherits a $G$-action, and its based homotopy orbit space is also a reduced Thom space
	\[ (\Sigma^E Y/A)_{hG} \cong \Sigma^{E_{hG}}((Y/A)_{hG}). \]
\end{lem}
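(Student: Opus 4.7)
The plan is to unpack both sides to disk/sphere bundle presentations and then use that the Borel construction $EG\times_G(-)$ commutes with the quotients involved, since it is a left adjoint (on unbased spaces) and hence preserves colimits.

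First I would set up notation by fixing a choice of disk and sphere bundles $D(E),S(E)\to Y$ so that $\Th(E)=D(E)/S(E)$ and
\[ \Sigma^E(Y/A)\;=\;\Th(E)\big/\Th(E|_A)\;=\;D(E)\big/\big(S(E)\cup D(E|_A)\big). \]
Since the $G$-action on $E$ is by bundle maps, it restricts to $D(E)$ and $S(E)$, and everything in sight is a $G$-invariant subspace of $D(E)$. So $\Sigma^E(Y/A)$ inherits a based $G$-action and the computation of its based homotopy orbits is
\[ (\Sigma^E(Y/A))_{hG}\;=\;EG_+\sma_G\,\Sigma^E(Y/A)\;=\;\big(EG\times_G D(E)\big)\Big/\big(EG\times_G(S(E)\cup D(E|_A))\big). \]

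Next I would observe that forming $EG\times_G(-)$ commutes with the quotients and unions above, because it is the composite of the left adjoint $EG\times(-)$ with the quotient by the free $G$-action, both of which preserve colimits of unbased spaces. In particular, for the bundle $E_{hG}=EG\times_G E\to Y_{hG}=EG\times_G Y$ one has $D(E_{hG})=EG\times_G D(E)$, $S(E_{hG})=EG\times_G S(E)$, and the restriction $E_{hG}|_{A_{hG}}$ is $(E|_A)_{hG}$. Therefore
\[ \Sigma^{E_{hG}}\!\big((Y/A)_{hG}\big)\;=\;D(E_{hG})\big/\big(S(E_{hG})\cup D(E_{hG}|_{A_{hG}})\big), \]
which is exactly the same quotient as computed above. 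This produces the desired homeomorphism, natural in $(Y,A,E)$.

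Finally, for a virtual bundle $\xi=E-n$ the reduced Thom spectrum is $\Sigma^\xi(Y/A)=\Sigma^{-n}\Sigma^E(Y/A)$; desuspension by the trivial $G$-representation $\R^n$ commutes with homotopy orbits, so the statement for virtual bundles follows formally from the vector bundle case. There is no substantive obstacle here; the only mild point of care is to verify that $(Y/A)_{hG}=Y_{hG}/A_{hG}$ and that the ambient vector bundle identifications are strict homeomorphisms (not merely equivalences) under the chosen disk/sphere bundle models, which is what allows the conclusion to be stated as an isomorphism $\cong$ rather than just an equivalence.
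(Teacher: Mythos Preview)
Your proposal is correct and follows essentially the same approach as the paper: both arguments unwind the reduced Thom space as a quotient of ordinary Thom spaces (or disk/sphere bundles) and use that the Borel construction $EG\times_G(-)$ preserves the relevant colimits. Your write-up is more explicit than the paper's one-line sketch, and your remark on the virtual bundle case is a harmless extra, but there is no substantive difference in strategy.
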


\begin{proof}
	This is straightforward from the description of the reduced Thom space as a quotient of two Thom spaces. Note that $E_{hG}$ is the unbased homotopy orbits of $E$, which forms a bundle over the unbased homotopy orbits $Y_{hG}$, while $(Y/A)_{hG}$ is based homotopy orbits of $Y/A$, which are a quotient of the unbased homotopy orbit spaces $(Y_{hG})/(A_{hG})$.
\end{proof}

\begin{ex}
The space from \autoref{bundle_on_pt} is easily seen to have an action by the isometry group of $X$. Therefore for any subgroup $G \leq I(X)$ we get a reduced Thom spectrum
\[ \PT(X)_{hG}^{-TX} = (\Sigma^{-TX} \PT(X))_{hG} \cong \Sigma^{-(TX)_{hG}}(\PT(X)_{hG}). \]
\end{ex}

Finally, we discuss a reduction that will help us identify $K$-theory with this Thom space.

Let $X = E^n$ or $H^n$. Let $S^{X}$ be the one-point compactification of $X$, and let $\Sigma^{X}$ be the smash product with this one-point compactification. The exponential map of $X$ identifies the tangent bundle $TX$ with the trivial fiber bundle whose fiber is the topological space $X$. This identification respects the action of the isometry group, essentially because isometries preserve geodesics.

This allows us to simplify the suspension of the Thom space as
\[ \Sigma^{X}\Sigma^{-TX}\PT(X) \simeq \Sigma^{TX}\Sigma^{-TX}\PT(X) \simeq \Sigma^\infty \PT(X). \]
By \autoref{suspend_invertible}, $\Sigma^{X}$ is an equivalence on spectra with an action by $I(X)$. Therefore, to prove our main theorem, it therefore suffices to suspend $K$-theory by $\Sigma^{X}$, and identify the result with the stable homotopy type of $\PT(X)$.

Similarly, when $X = S(W)$ for an inner product space $W$, we use the standard isomorphism
\[ TS(W) \times \R \cong S(W) \times W \]
between the once-stabilized tangent bundle and the trivial bundle with fiber $W$. This gives us an equivalence of Borel $O(W)$-spectra
\[ \Sigma^{W}\Sigma^{-TS(W)}\PT(W) \simeq \Sigma^\infty (\Sigma\PT(W)). \]
All together this shows:
\begin{lem}\label{tangent_reduction}
	To prove that $K(\Pol{X}{G})$ is equivalent to the Thom spectrum $\PT(X)^{-TX}_{hG}$, it suffices to give an equivalence of Borel $G$-spectra
\[ \Sigma^{X} K(\Pol{X}{1}) \simeq \Sigma^\infty \PT(X) \]
and in the spherical case
\[ \Sigma^{W} K(\Pol{S(W)}{1}) \simeq \Sigma^\infty (\Sigma\PT(W)). \]
\end{lem}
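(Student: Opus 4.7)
My plan is to chain together the reductions already assembled in the paragraphs immediately preceding the statement, treating the lemma as a bookkeeping consequence of three ingredients: the two hypothesized equivalences, the equivariant (stabilized) trivialization of the tangent bundle, and the Farrell-Jones isomorphism of Theorem \ref{farrell_jones}.

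For the non-spherical case $X = E^n$ or $H^n$, I would begin with the assumed equivalence $\Sigma^{X} K(\Pol{X}{1}) \simeq \PT(X)$ of Borel $I(X)$-spectra and pair it with the identification $\Sigma^{X} \PT(X)^{-TX} \simeq \Sigma^{TX} \PT(X)^{-TX} \simeq \PT(X)$, which is $I(X)$-equivariant because the exponential map is natural for isometries. Together these give $\Sigma^{X} K(\Pol{X}{1}) \simeq \Sigma^{X} \PT(X)^{-TX}$. Lemma \ref{suspend_invertible} says that $\Sigma^{X}$ is invertible up to equivalence of Borel $G$-spectra (since $S^{X}$ is a $G$-CW complex), so canceling it from both sides yields $K(\Pol{X}{1}) \simeq \PT(X)^{-TX}$ as Borel $G$-spectra. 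Applying homotopy $G$-orbits and then the Farrell-Jones equivalence $K(\Pol{X}{1})_{hG} \simeq K(\Pol{X}{G})$ of Theorem \ref{farrell_jones} produces $K(\Pol{X}{G}) \simeq \PT(X)_{hG}^{-TX}$, as required.

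The spherical case $X = S(W)$ runs along the same lines with the twist that $TS(W)$ is not trivial, but $TS(W) \oplus \R \cong S(W) \times W$ is, $O(W)$-equivariantly via the outward normal. This gives $\Sigma^{W} \PT(W)^{-TS(W)} \simeq \Sigma \PT(W)$ as Borel $O(W)$-spectra. Combining with the hypothesis $\Sigma^{W} K(\Pol{S(W)}{1}) \simeq \Sigma \PT(W)$, canceling $\Sigma^{W}$ via Lemma \ref{suspend_invertible}, taking homotopy $G$-orbits, and applying Farrell-Jones concludes this case.

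No step is a serious obstacle; the lemma is essentially a reorganization of the preceding paragraph. The one thing that must be watched is that every identification — particularly the two trivializations of $TX$ and $TS(W)\oplus\R$ — is equivariant for the full isometry group, so that the cancellation of $\Sigma^{X}$ (resp.\ $\Sigma^{W}$) and the subsequent restriction to an arbitrary subgroup $G$ are both legitimate at the level of Borel $G$-spectra. The genuine content, which the rest of the paper must supply, is of course the two hypothesized equivalences themselves.
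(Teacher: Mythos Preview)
Your proposal is correct and follows essentially the same approach as the paper: the lemma is stated immediately after the paragraphs that constitute its proof, and those paragraphs use exactly the ingredients you name—the equivariant exponential-map trivialization of $TX$ (resp.\ the stable trivialization $TS(W)\oplus\R\cong W$), Lemma~\ref{suspend_invertible} to cancel $\Sigma^{X}$ (resp.\ $\Sigma^{W}$), and the Farrell--Jones isomorphism of Theorem~\ref{farrell_jones} to pass from $K(\Pol{X}{1})_{hG}$ to $K(\Pol{X}{G})$. The only difference is cosmetic: the paper leaves the invocation of Farrell--Jones implicit in the surrounding narrative, whereas you spell it out.
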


\section{Homotopy colimits and total homotopy cofibers}\label{sec:hocolim}

In this section we fix some conventions for homotopy colimits that will be used in the proof of the main theorem, and some facts about total homotopy cofibers that are useful for the subsequent computations.

Recall that the simplicial category $\mathbf\Delta$ has objects the sets
\[ [n] = \{0,1,\ldots,n\} \]
and maps are the nondecreasing maps to sets. Let $\mathscr I \subseteq \mathbf\Delta$ denote the category with the same objects but only the injective maps. A simplicial space is a functor $X_\bullet\colon \mathbf\Delta^\op \to \cat{Top}$. We refer the reader elsewhere for the usual terminology about simplicial objects, e.g. \cite{goerss_jardine}.


Let $\bI$ be any poset (partially ordered set) and $F\colon \bI \to \cat{Top}$ any diagram of unbased spaces indexed by $\bI$. As mentioned in \autoref{sec:complexes}, we define the unbased homotopy colimit
\[ \underset{\bI}\uhocolim F \]
using the Bousfield-Kan formula. To be concrete, we take the simplicial space that at level $k$ is the coproduct over chains in $\bI$ of $F$ evaluated at the chain's first object:
\[ \coprod_{i_0 \leq \cdots \leq i_k} F(i_0). \]
As usual, the face maps delete elements from the chain $i_0 \leq \cdots \leq i_k$, and apply the map $F(i_0) \to F(i_1)$ if $i_0$ is deleted. The degeneracy maps duplicate one of the $i_j$.

This simplicial space has a special property. The nondegenerate points at each level are the union over the strict chains
\[ \coprod_{i_0 < \cdots < i_k} F(i_0), \]
and these are preserved by the face maps. It follows that the realization can be defined using only the nondegenerate points and the face maps, as the following coequalizer.
\begin{equation}\label{realization_inj}
	\coprod_{k,\ell \geq 0} \left(\Delta^k \times \mathscr I([k],[\ell]) \times \coprod_{i_0 < \cdots < i_\ell} F(i_0) \right) \rightrightarrows 
	\coprod_{k \geq 0} \left( \Delta^k \times \coprod_{i_0 < \cdots < i_k} F(i_0) \right) 
\end{equation}

This is standard, but it will be helpful to spell it out explicitly.
\begin{lem}\label{reduce_to_inj}
	The inclusion of \eqref{realization_inj} into the usual formula for the realization
	\begin{equation}\label{realization}
	\coprod_{k,\ell \geq 0} \left(\Delta^k \times \mathbf\Delta([k],[\ell]) \times \coprod_{i_0 \leq \cdots \leq i_\ell} F(i_0) \right) \rightrightarrows 
	\coprod_{k \geq 0} \left( \Delta^k \times \coprod_{i_0 \leq \cdots \leq i_k} F(i_0) \right)
\end{equation}
	is a homeomorphism.
\end{lem}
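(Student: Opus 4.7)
The plan is to recognize $X_\bullet$ as ``freely degenerated'' from its nondegenerate part, and then invoke the standard fact that a realization of this form can be computed using only the nondegenerate simplices and the injective structure maps among them.

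First I would identify the nondegenerate simplices. The degeneracy $s_j$ acts on a point $(i_0 \leq \cdots \leq i_k, f)$ by duplicating the entry $i_j$ and leaving $f \in F(i_0)$ untouched. Hence the image of a degeneracy is a chain in which two adjacent entries coincide, and the nondegenerate simplices at level $\ell$ are exactly the strict chains $\coprod_{i_0 < \cdots < i_\ell} F(i_0)$. Moreover each face map $d_j$ only deletes an entry (applying $F(i_0) \to F(i_1)$ if the first entry is deleted), so it preserves strict inequality and thus sends nondegenerate simplices to nondegenerate simplices. In other words, the strict chains form a semisimplicial subobject $Y_\bullet$ of the simplicial space at hand.

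The key combinatorial step would then be an Eilenberg--Zilber--type uniqueness: every $x = (i_0 \leq \cdots \leq i_k, f)$ factors uniquely as $x = \sigma^*(y)$ with $\sigma\colon [k] \twoheadrightarrow [\ell]$ a surjection in $\mathbf\Delta$ and $y \in Y_\ell$. The surjection $\sigma$ is forced: it must identify precisely those adjacent pairs $j, j+1$ with $i_j = i_{j+1}$, and $y$ is then the associated strict chain $i_{j_0} < \cdots < i_{j_\ell}$ together with $f$. Combined with the familiar decomposition of $\Delta^k$ into open interiors of its faces (every $t \in \Delta^k$ has a unique expression $t = \tau_*(s)$ with $\tau$ surjective and $s$ in the interior of $\Delta^m$), this shows that every point of the realization admits a unique representative $(s, y) \in \mathrm{int}(\Delta^m) \times Y_m$ after using the simplicial relations to absorb the surjection.

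Finally I would assemble this into the conclusion. Injectivity of the map from \eqref{realization_inj} to \eqref{realization} follows from the uniqueness of the normal form, while surjectivity follows by pushing any $(t, x)$ first to nondegenerate form along the surjection factor of $x$, and then to the interior along the surjection factor of the resulting simplicial coordinate. Both coequalizers inherit a CW structure with one cell $\Delta^\ell \times \{y\}$ for each $y \in Y_\ell$, attached via the injective face maps in $\mathscr I$, so the continuous bijection is automatically a homeomorphism. The main obstacle is the Eilenberg--Zilber uniqueness; the topological passage from continuous bijection to homeomorphism is routine once this cellular description is in place.
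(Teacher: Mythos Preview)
Your approach via the Eilenberg--Zilber normal form is essentially the same as the paper's: both arguments hinge on the fact that each (possibly degenerate) chain $i_0 \leq \cdots \leq i_k$ is the image under a unique surjection $\beta_c\colon [k] \twoheadrightarrow [k']$ of a unique strict chain, and that this normalization is compatible with the face maps. The paper simply packages this as an explicit inverse map $\Delta^k \times F(i_0) \to \Delta^{k'} \times F(i_0)$, applying $\beta_c$ on the simplex coordinate and the identity on $F(i_0)$, and then checks directly that this map respects the coequalizer relations and that the two composites are identities.

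There is, however, a genuine gap in your final step. You assert that both coequalizers carry a CW structure with one cell $\Delta^\ell \times \{y\}$ for each point $y \in Y_\ell$, and deduce that the continuous bijection is a homeomorphism. But the spaces $F(i_0)$ are arbitrary topological spaces, not discrete sets, so indexing cells by their points produces the wrong topology: the resulting ``CW complex'' would carry the topology in which each $F(i_0)$ is discrete, not its given topology. The continuous-bijection-to-homeomorphism passage therefore needs a different justification. The cleanest fix is exactly what the paper does: write down the inverse on each summand $\Delta^k \times F(i_0)$ (it is visibly continuous, being $(\beta_c)_* \times \id$), and verify it descends to the coequalizer. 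Alternatively, you could argue via the skeletal filtration of the realization, in which the $n$-skeleton is obtained from the $(n-1)$-skeleton by a pushout along $\partial\Delta^n \times (\textup{nondegenerate } n\textup{-simplices})$; this is a ``cell structure'' in a generalized sense with cells $\Delta^\ell \times F(i_0)$ indexed by strict chains, not by points.
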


\begin{proof}
	We define the inverse map from \eqref{realization} to \eqref{realization_inj} as follows. For each non-strict chain $c = [i_0 \leq \cdots \leq i_k]$, there is a unique strict chain $c' = [i_0 < \cdots < i_k]$ with the same objects but only $k' < k$ terms, and a surjective map $\beta_c\colon [k] \to [k']$ encoding which of the terms in the original chain were duplicated. We map the corresponding copy of $\Delta^k \times F(i_0)$ in \eqref{realization} to $\Delta^{k'} \times F(j_0)$ in \eqref{realization_inj} by applying $\beta_c$ to the simplex and the identity to $F(i_0)$.
	
	We check this respects the coequalizer relation along each map $\alpha\colon [k] \to [\ell]$ in $\mathbf\Delta$. Since each map factors into a surjective map followed by an injective map, we consider these two cases separately. When $\alpha$ is surjective, the check is straightforward since the two non-strict chains we start with have the same associated strict chain. If $\alpha$ is injective, then for each non-strict chain $c_1 = [j_0 \leq \cdots \leq j_\ell]$, the action of $\alpha$ takes it to a smaller non-strict chain
	\[ c_0 = [i_0 \leq \cdots \leq i_k] \subseteq c_1 = [j_0 \leq \cdots \leq j_\ell]. \]
	This induces an inclusion of the associated strict chains $c_0' \subseteq c_1'$. The associated map of finite sets $\alpha'$ fits into a commuting square
	\[ \xymatrix{
		[k] \ar@{->>}[d]_-{\beta_{c_0}} \ar[r]^\alpha & [\ell] \ar@{->>}[d]^-{\beta_{c_1}} \\
		[k'] \ar[r]^-{\alpha'} & [\ell'].
	} \]
	Using this we check that the coequalizer relation in \eqref{realization} along $\alpha$ goes to the coequalizer relation in \eqref{realization_inj} along $\alpha'$, and so after applying both the map is well defined.
	
	If we go from \eqref{realization_inj} to \eqref{realization} to \eqref{realization_inj} we clearly get the identity. If we go from \eqref{realization} to \eqref{realization_inj} to \eqref{realization}, the image of each term $\Delta^k \times F(i_0)$ is identified back to $\Delta^k \times F(i_0)$ by the identity map, using the coequalizer relation along $\beta_c$.
\end{proof}

\begin{rmk}\label{skeletal}
	We will frequently use the following standard ``inductive'' or ``skeletal'' interpretation of \eqref{realization_inj}. The homotopy colimit may be defined inductively, where at the $k$th stage we attach copies of $\Delta^k \times F(i_0)$ along the boundary $(\partial \Delta^k) \times F(i_0)$, one for each strict chain $i_0 < \cdots < i_k$. The $j$th face of $\partial\Delta^k$ is attached to the copy of $\Delta^{k-1} \times F(i_0)$ whose chain is $i_0 < \cdots < i_k$ but with the $j$th term missing. In the case of $j = 0$, this attaching map also applies the map $F(i_0) \to F(i_1)$ arising from $i_0 < i_1$.
\end{rmk}

Let $C(\bI)$ be the poset whose objects are nonempty strict chains in $\bI$
\[ c_0 = [ i_0 < \cdots < i_k ] \]
and whose morphisms $c_0 \to c_1$ are reverse inclusions $c_0 \supseteq c_1$. In other words, the morphisms delete elements from the chains. Taking the first element of each chain defines a functor $C(\bI) \to \bI$, and composing with this functor makes $F$ into a diagram over $C(\bI)$ as well.
\begin{lem}\label{sd_homeo}
	Barycentric subdivision gives a natural homeomorphism
	\[ \underset{\bI}\uhocolim\, F \cong \underset{C(\bI)}\uhocolim\, F. \]
\end{lem}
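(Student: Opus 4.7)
The plan is to build both sides using the non-degenerate skeletal description of \autoref{reduce_to_inj} and \autoref{skeletal}, and to exhibit the homeomorphism cell by cell via classical barycentric subdivision, performed ``fiberwise'' over the diagram $F$. On $\uhocolim_\bI F$, the $\ell$-cell $\Delta^\ell \times F(j_0)$ is attached for each strict chain $[j_0 < \cdots < j_\ell]$ in $\bI$. On $\uhocolim_{C(\bI)} F$, the $k$-cell $\Delta^k \times F(\mathrm{first}(c_0))$ is attached for each strict chain $c_0 \supsetneq c_1 \supsetneq \cdots \supsetneq c_k$ in $C(\bI)$. The central bijection is this: strict chains in $C(\bI)$ of length $k+1$ with a given top element $c_0 = [j_0 < \cdots < j_\ell]$ correspond to strict chains $\{0,\ldots,\ell\} = S_0 \supsetneq S_1 \supsetneq \cdots \supsetneq S_k$ of nonempty subsets of $\{0,\ldots,\ell\}$, via $c_i \leftrightarrow S_i := \{m : j_m \in c_i\}$.

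For each such pair I would define an affine embedding $\Delta^k \times F(j_0) \hookrightarrow \Delta^\ell \times F(j_0)$ by sending the $i$-th vertex of $\Delta^k$ to the barycenter of $S_i$ inside $\Delta^\ell$, combined with the identity on $F(j_0)$. Ranging $c_0$ over all strict chains in $\bI$ and then the $S_\bullet$ over all decreasing chains of nonempty subsets, these embeddings recover the classical barycentric subdivision of each $\Delta^\ell$, crossed with the fiber $F(j_0)$.

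The main task is to check compatibility with the face identifications on both sides. For the internal face maps $d_j$ with $j \geq 1$, deleting $c_j$ corresponds to deleting $S_j$, and the affine embedding just restricts to the face of its sub-simplex; this matches the cellular structure on the source. For the outer face $d_0$, we drop $c_0$ on the source side and move to the cell of $c_1 \supsetneq \cdots \supsetneq c_k$; on the target side, the affine embedding lands in the face of $\Delta^\ell$ indexed by $S_1$, which under the face identification of $\uhocolim_\bI F$ becomes the cell of the sub-chain $c_1 = [j_{m_1} < \cdots]$ with $m_1 = \min S_1$, with $F(j_0)$ identified with $F(j_{m_1}) = F(\mathrm{first}(c_1))$ via the structural map of $F$ along $j_0 \leq j_{m_1}$. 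By functoriality of $F$, this is exactly the structural map used by $d_0$ on the source side. Granting this check, the assembled map is bijective on each target cell (which is simply barycentrically subdivided), hence a homeomorphism, with naturality in $F$ and $\bI$ visible from the construction. The only real obstacle is the careful bookkeeping needed to ensure the $F$-functoriality lines up correctly along $d_0$.
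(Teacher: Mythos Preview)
Your argument is correct and is essentially the same as the paper's, just packaged differently: the paper defines a subdivision functor on simplicial spaces, invokes the natural homeomorphism $|\sd X_\bullet| \cong |X_\bullet|$, and then identifies the subdivided simplicial space level-by-level with the one defining $\uhocolim_{C(\bI)} F$, whereas you carry out the same identification directly at the cellular level via the skeletal description of \autoref{skeletal}. The key bookkeeping step---matching the $d_0$ face on the $C(\bI)$ side with the face identification in $\uhocolim_{\bI} F$ along the inclusion $c_1 \subset c_0$, via the structure map $F(\mathrm{first}(c_0)) \to F(\mathrm{first}(c_1))$---is the same in both arguments.
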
	

\begin{proof}
	Let $\Delta[k]_\bullet$ denote the standard $k$-simplex as a simplicial set. Recall that its barycentric subdivision $(\sd\Delta[k])_\bullet$ is the nerve of the poset of the nonempty faces $\Delta^a \subseteq \Delta^k$, $0 < a \leq k$, in other words the poset of strict chains contained in $[0 < \cdots < k]$.
	
	This is natural in $k$, and therefore extends in a canonical way to an operation on simplicial spaces $X_\bullet$, by taking the canonical presentation of any such simplicial space
	\[
		\coprod_{k,\ell \geq 0} \Delta[k]_\bullet \times \mathbf\Delta([k],[\ell]) \times X_\ell \rightrightarrows 
		\coprod_{k \geq 0} \Delta[k]_\bullet \times X_k \rightarrow X_\bullet
	\]
	and applying the subdivision to the terms $\Delta[k]_\bullet$ and $\Delta[\ell]_\bullet$:
	\[
		\coprod_{k,\ell \geq 0} (\sd\Delta[k])_\bullet \times \mathbf\Delta([k],[\ell]) \times X_\ell \rightrightarrows 
		\coprod_{k \geq 0} (\sd\Delta[k])_\bullet \times X_k \rightarrow (\sd X)_\bullet
	\]
	There is a homeomorphism of realizations
	\[ |\sd\Delta[k]_\bullet| \cong |\Delta[k]_\bullet| \]
	that is natural in $k$, and therefore for any simplicial space $X$ a homeomorphism
	\[ |\sd X_\bullet| \cong |X_\bullet|. \]
	
	Now let $X_\bullet$ be the simplicial space defining the homotopy colimit of $F$ over $\bI$. Take the subdivision $\sd X_\bullet$ and apply the argument of \autoref{reduce_to_inj}, but with the simplicial set $(\sd\Delta[k])_\bullet$ in the place of the topological space $\Delta^k$. The conclusion is that the subdivision is expressed as the coequalizer along just the face maps,
	\begin{equation}\label{sd_inj}
		\coprod_{k,\ell \geq 0} \left((\sd\Delta[k])_\bullet \times \mathscr I([k],[\ell]) \times \coprod_{i_0 < \cdots < i_\ell} F(i_0) \right) \rightrightarrows 
		\coprod_{k \geq 0} \left( (\sd\Delta[k])_\bullet \times \coprod_{i_0 < \cdots < i_k} F(i_0) \right).
	\end{equation}
	Writing this out, the space $(\sd X)_n$ has one copy of $F(i_0)$ for each choice of an integer $k \geq 0$, a strict chain $i_0 < \cdots < i_k$, and a point in $(\sd\Delta[k])_n$, corresponding to an $n$-tuple of strict chains
	\[ c_0 \supseteq \cdots \supseteq c_n \]
	in $[0 < \cdots < k]$, equivalently in $[i_0 < \cdots < i_k]$. This is up to the relation that if all the chains $c_0, \ldots, c_n$ miss the point $i_j$, we may delete that point from the data (and apply $F(i_0) \to F(i_1)$ if $j = 0$).
	
	Equivalently, the space $(\sd X)_n$ has one copy of $F(i_0)$ for each $n$-tuple of strict chains
	\[ c_0 \supseteq \cdots \supseteq c_n \]
	in the entire poset $\bI$. The degeneracy and face maps match exactly those in the definition of the homotopy colimit over $C(\bI)$. Therefore $\sd X_\bullet$ is identified with the simplicial space defining the homotopy colimit of $F$ over $C(\bI)$.
	
	In summary, we get homeomorphisms
	\[ \underset{\bI}\uhocolim\, F = |X_\bullet| \cong |\sd X_\bullet| \cong \underset{C(\bI)}\uhocolim\, F. \]
\end{proof}

\begin{rmk}
	Using non-strict chains in the definition of $C(\bI)$ would give an equivalent answer, but not a homeomorphic one. Also, the result does not appear to generalize from posets to arbitrary categories. 
\end{rmk}

Recall from \autoref{tcofib} the definition of total homotopy cofiber: if $\bI$ is a poset with a terminal object $*$, and $F$ is a diagram on $\bI$, then
\[ \tcofib(F) := \left( \underset{\bI}\uhocolim\, F \right) / \left( \underset{\bI \setminus \{*\}}\uhocolim\, F \right). \]
\begin{lem}\label{tcofib_mapping_cone}
	This is homeomorphic to the mapping cone of the canonical map
	\[ \xymatrix{ \underset{\bI \setminus \{*\}}\uhocolim\, F \ar[r] & F(*). } \]
\end{lem}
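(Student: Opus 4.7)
The plan is to construct a homeomorphism directly using the skeletal cell decomposition from \autoref{skeletal}. Decompose the cells of $\underset{\bI}\uhocolim F$ by their indexing strict chain. Since $*$ is terminal, every strict chain in $\bI$ is either entirely contained in $\bI' := \bI \setminus \{*\}$, or ends at $*$ as its maximum element. The first family assembles into $\underset{\bI'}\uhocolim F$, which is collapsed to the basepoint in $\tcofib(F)$. The second family is in natural bijection with strict chains in $\bI'$ (including the empty chain, which corresponds to the chain $[*]$ alone): appending $*$ gives the bijection. For each strict chain $[i_0 < \cdots < i_{k-1}]$ in $\bI'$ of length $k-1 \geq 0$, the extended chain contributes a $k$-cell $\Delta^k \times F(i_0)$ (with $F(*)$ when $k = 0$).

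Next, I would use the join decomposition $\Delta^k = \Delta^{k-1} * \{v_k\}$, realized as the quotient of the prism $\Delta^{k-1} \times [0,1]$ that collapses $\Delta^{k-1} \times \{1\}$ to the single vertex $v_k$. Under this identification, the face $d_k$ (opposite $v_k$, attached to the cell for $[i_0 < \cdots < i_{k-1}]$ in $\underset{\bI'}\uhocolim F$) sits at $t = 0$, while $v_k$ sits at $t = 1$ and is identified inside $\underset{\bI}\uhocolim F$ with its image in $F(*)$ under $F(i_0 \leq *)$. This latter identification follows from the skeletal rule: iterating the face map $d_0$ carries the chain to $[*]$ while applying the composite $F(i_0) \to F(i_1) \to \cdots \to F(*)$, which equals $F(i_0 \leq *)$ by functoriality of $F$.

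With this reparametrization, the cells of $\underset{\bI}\uhocolim F$ assemble into the mapping cylinder $\Cyl(f) = \big((\underset{\bI'}\uhocolim F) \times [0,1]\big) \sqcup F(*) \,/\, ((x, 1) \sim f(x))$, with $\underset{\bI'}\uhocolim F$ sitting as the $t = 0$ end. Quotienting both sides by $\underset{\bI'}\uhocolim F$ then gives the claimed homeomorphism $\tcofib(F) \cong M_f$, since the mapping cylinder modulo its free end is by definition the mapping cone. The main obstacle will be verifying compatibility of all attaching maps when assembling the cells into the mapping cylinder, particularly the $j = 0$ faces of each prism where the morphism $F(i_0) \to F(i_1)$ must be absorbed consistently across prisms of varying dimensions. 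This amounts to a direct (if tedious) verification using functoriality of $F$ together with the attaching rules of \autoref{skeletal}.
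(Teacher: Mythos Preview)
Your approach is essentially the same as the paper's. Both proofs use the bijection between strict chains in $\bI$ ending at $*$ and strict chains in $\bI' = \bI \setminus \{*\}$ (including the empty chain), together with the collapse map $\Delta^{k-1} \times I \to \Delta^k$ sending $\Delta^{k-1} \times \{1\}$ to the final vertex; your join decomposition $\Delta^k = \Delta^{k-1} * \{v_k\}$ is exactly this map. The only organizational difference is that you pass through the mapping cylinder $\Cyl(f) \cong \underset{\bI}\uhocolim F$ before quotienting, whereas the paper matches pieces of the mapping cone directly to pieces of the quotient, but the cell-by-cell identifications and the verifications required are the same.
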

\begin{proof}
	The skeletal description of the homotopy colimit in \autoref{skeletal} leads to a skeletal description of the mapping cone, built from pieces of the form $\Delta^k \times F(i_0) \times I$ for each $i_0 < \cdots < i_k$. The identifications are as before along the faces of $\Delta^k$. All the faces associated to the front of the interval $I$ are quotiented to a single point, and all the faces associated to the back are glued to $F(*)$.
	
	To identify each of these pieces to the corresponding piece $\Delta^{k+1} \times F(i_0)$ in the homotopy colimit over $\bI$, we add the terminal object to the chain to get $i_0 < \cdots < i_k < *$. We apply the map
	\[ \Delta^k \times I \to \Delta^{k+1} \]
	that sends $\Delta^k \times \{1\}$ to the final vertex, $\Delta^k \times \{0\}$ to the opposite face by a homeomorphism, and interpolates linearly for the rest. Inductively, these identifications give homeomorphisms on each of the skeleta, and therefore give a homeomorphism on the entire realization.
\end{proof}

We finish the section by showing that our definition of total homotopy cofiber agrees with the total homotopy cofiber of a cube in a wide array of examples, not just the obvious one when $\bI$ is a cube-shaped poset.

Suppose $\bI$ is equipped with a ``dimension'' map, a map of posets from $\bI$ to
\[ [n] = \{ 0 < \cdots < n \}. \]
We ask that the preimage of $n$ is only the terminal object $*$, and the preimage of any other point is discrete (contains no non-identity morphisms). In other words, every nontrivial morphism in $\bI$ goes to a nontrivial morphism in $[n]$. For example, $\bI$ may be the geometric subspaces of a fixed geometry $X$, and the map to $[n]$ takes every subspace to its dimension.

Let $\bJ$ be the cube-shaped poset of all subsets of $[n-1]$, ordered by reverse inclusion, so $S \to T$ means that $S \supseteq T$. Note that every strict chain $[ i_0 < \cdots < i_k ]$ in $\bI \setminus \{*\}$ is assigned to a subset $S \subseteq [n-1]$ of size $(k+1)$, encoding the images of the elements $i_0$ through $i_k$ in the set $[n-1]$. This defines a functor that we denote
\[ \dim\colon C(\bI \setminus \{*\}) \to (\bJ \setminus \{\emptyset\}). \]
We define a diagram on $\bJ$ by sending each subset $S \subseteq [n-1]$ to the coproduct of $F$ over all chains whose dimensions match $S$:
\[ \coprod_{\underset{\dim = S}{i_0 < \cdots < i_k}} F(i_0). \]
We also send the empty set to $F(*)$. We call this diagram $\amalg_{\dim} F$.
\begin{prop}\label{punctured_homeo}
	Under these hypotheses, there is a homeomorphism
	\[ \underset{\bI\setminus\{*\}}\uhocolim\, F \cong \underset{\bJ \setminus \{\emptyset\}}\uhocolim\, \amalg_{\dim} F. \]
\end{prop}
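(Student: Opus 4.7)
The plan is to apply \autoref{sd_homeo} to the left-hand side and then construct a levelwise isomorphism of the resulting simplicial space with the Bousfield--Kan simplicial space defining the right-hand side. By \autoref{sd_homeo},
\[ \underset{\bI\setminus\{*\}}\uhocolim\, F \cong \underset{C(\bI\setminus\{*\})}\uhocolim\, F, \]
so both sides are realizations of simplicial spaces, with $k$-simplices
\[ \coprod_{c_0 \supseteq \cdots \supseteq c_k} F(i_0(c_0)) \quad\text{and}\quad \coprod_{S_0 \supseteq \cdots \supseteq S_k}\; \coprod_{\dim c = S_0} F(i_0(c)), \]
respectively, where $c_j$ ranges over strict chains in $\bI \setminus \{*\}$, $S_j$ over nonempty subsets of $[n-1]$, and $i_0(c)$ denotes the first (smallest) element of the chain $c$.

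The key combinatorial observation is that, by the hypothesis, every strict chain $c = [i_0 < \cdots < i_j]$ in $\bI \setminus \{*\}$ has strictly increasing dimensions, and each element of $c$ is uniquely determined within $c$ by its dimension. Consequently, for any subset $T \subseteq \dim(c)$ there is a unique sub-chain $c|_T \subseteq c$ with $\dim(c|_T) = T$, and a weakly decreasing sequence $c_0 \supseteq \cdots \supseteq c_k$ in $C(\bI\setminus\{*\})$ is the same data as a chain $c_0$ together with a weakly decreasing sequence $\dim c_0 \supseteq \cdots \supseteq \dim c_k$ of subsets of $[n-1]$; the inverse sends $(c_0;\, S_0 \supseteq \cdots \supseteq S_k)$ to $(c_0|_{S_0}, \ldots, c_0|_{S_k})$. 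This yields a bijection of the two coproducts above.

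Finally, I would check that the bijection intertwines the face and degeneracy maps, giving an isomorphism of simplicial spaces and hence a homeomorphism of realizations. Degeneracies and the face maps $d_j$ for $j \geq 1$ merely duplicate or delete entries of the nested sequence and leave the basepoint fixed, so these match automatically. The substantive check, and the main obstacle, is $d_0$: on the left it applies $F(i_0(c_0)) \to F(i_0(c_1))$ induced by $i_0(c_0) \leq i_0(c_1)$ in $\bI$, while on the right it applies the structure map of $\amalg_{\dim} F$ from $S_0$ to $S_1$, which sends the $c_0$ summand to the $c_0|_{S_1}$ summand along $F(i_0(c_0)) \to F(i_0(c_0|_{S_1}))$. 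Under the bijection $c_1 = c_0|_{S_1}$, so these agree; this is the only identification that genuinely uses the way $\amalg_{\dim} F$ is functorial on morphisms of $\bJ$.
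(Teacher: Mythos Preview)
Your proposal is correct and follows essentially the same route as the paper: apply \autoref{sd_homeo} to replace $\bI\setminus\{*\}$ by $C(\bI\setminus\{*\})$, then identify the resulting simplicial space levelwise with the Bousfield--Kan simplicial space for $\amalg_{\dim} F$ over $\bJ\setminus\{\emptyset\}$ via the bijection between nested sequences of subchains $c_0 \supseteq \cdots \supseteq c_k$ and pairs $(c_0;\, S_0 \supseteq \cdots \supseteq S_k)$. Your explicit verification of $d_0$ is more detailed than the paper's, which simply asserts that the identification of index sets yields an isomorphism of simplicial spaces.
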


\begin{proof}
	By \autoref{sd_homeo} the left-hand side is identified with the homotopy colimit of $F$ over $C(\bI \setminus \{*\})$. The associated simplicial space has as $n$-simplices the disjoint union
	\[ \coprod_{c_0 \supseteq \cdots \supseteq c_n} F(i_0) \]
	where the $c_j$ are strict chains in $\bI \setminus \{*\}$. On the other hand, the right-hand side comes from a simplicial space whose $n$-simplices are the disjoint union
	\[ \coprod_{S_0 \supseteq \cdots \supseteq S_n} \left(\coprod_{\underset{\dim c_0 = S_0}{c_0}} F(i_0) \right) \]
	where the $S_j$ are subsets of $[n-1]$. Given a strict chain $c_0$ in $\bI \setminus \{*\}$ with dimensions $S_0$, specifying subsets $S_j \subseteq S_0$ is equivalent to specifying strict chains $c_j \subseteq c_0$. This allows us to identify the two simplicial spaces and conclude that their realizations are homeomorphic.
\end{proof}

Together with \autoref{tcofib_mapping_cone} this implies
\begin{cor}\label{tcofib_homeo}
	The total homotopy cofibers of $F$ and $\amalg_{\dim} F$ are homeomorphic:
	\[ \left( \underset{\bI}\uhocolim\, F \right) / \left( \underset{\bI \setminus \{*\}}\uhocolim\, F \right)
	\cong \left( \underset{\bJ}\uhocolim\, \amalg_{\dim} F \right) / \left( \underset{\bJ \setminus \{\emptyset\}}\uhocolim\, \amalg_{\dim} F \right). \]
\end{cor}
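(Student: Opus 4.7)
The plan is to reduce this corollary to the two preceding results, \autoref{punctured_homeo} and \autoref{tcofib_mapping_cone}. By \autoref{tcofib_mapping_cone}, the left-hand total homotopy cofiber is homeomorphic to the mapping cone of the canonical map
\[ \underset{\bI\setminus\{*\}}\uhocolim\, F \ \longrightarrow\ F(*), \]
while the right-hand total cofiber is the mapping cone of
\[ \underset{\bJ\setminus\{\emptyset\}}\uhocolim\, \amalg_{\dim} F \ \longrightarrow\ (\amalg_{\dim} F)(\emptyset) = F(*). \]
So it suffices to produce a homeomorphism between these two punctured homotopy colimits that commutes with the maps down to $F(*)$; then take mapping cones and we are done.

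The homeomorphism itself is exactly \autoref{punctured_homeo}, so the only remaining task is the compatibility check with the maps to $F(*)$. I would verify this directly at the level of the simplicial spaces used in the proof of \autoref{punctured_homeo}. On the left, an $n$-simplex is indexed by a chain of strict chains $c_0 \supseteq \cdots \supseteq c_n$ in $\bI\setminus\{*\}$, with the associated copy of $F(i_0)$ (where $i_0$ is the minimum element of $c_0$) mapping to $F(*)$ via the morphism $i_0 \to *$ in $\bI$. On the right, an $n$-simplex is indexed by $S_0 \supseteq \cdots \supseteq S_n$ in $\bJ\setminus\{\emptyset\}$ together with a strict chain $c_0$ with $\dim c_0 = S_0$, and the associated $F(i_0)$ likewise maps to $(\amalg_{\dim} F)(\emptyset) = F(*)$ via the structure map of the diagram, which unwinds to the same morphism $i_0 \to *$ in $\bI$. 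The bijection of indexing data from \autoref{punctured_homeo}, which sends $(c_0 \supseteq \cdots \supseteq c_n)$ to $(S_j := \dim c_j)$ together with $c_0$, preserves $i_0$ on the nose, so the two maps to $F(*)$ agree.

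The only genuine point to double-check is that this identification is well-posed: given a strict chain $c_0$ with dimension set $S_0$ and a subset $S_j \subseteq S_0$, there is a unique sub-chain $c_j \subseteq c_0$ with $\dim c_j = S_j$. This is immediate from the hypothesis on the dimension map $\bI \to [n]$, which forces the restriction of $\dim$ to any strict chain to be injective. I expect this to be the only mildly delicate step; the rest is a routine unpacking of the definitions and a direct application of \autoref{tcofib_mapping_cone}.
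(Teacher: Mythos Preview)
Your proposal is correct and follows the same approach as the paper, which simply states that the corollary follows from \autoref{punctured_homeo} together with \autoref{tcofib_mapping_cone}. Your additional verification that the homeomorphism of \autoref{punctured_homeo} is compatible with the canonical maps to $F(*)$ is a detail the paper leaves implicit, and your argument for it (that the bijection of indexing data preserves the minimum element $i_0$ of $c_0$) is correct.
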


Therefore for any poset $\bI$ with such a map to $[n]$, the total homotopy cofiber over $\bI$ is also a total homotopy cofiber of over a cube $\bJ$. In particular, the polytope and Steinberg complexes of \autoref{PT} are total homotopy cofibers of cubes.
\begin{cor}\label{PT_cube}
\[ \PT(X) \cong \tcofib\left(
	S \mapsto \coprod_{\underset{\dim = S}{U_0 \subseteq \cdots \subseteq U_k}} U_0, \quad 
	\emptyset \mapsto X
\right), \]
\[ \ST(X) \cong \tcofib\left(
	S \mapsto \coprod_{\underset{\dim = S}{U_0 \subseteq \cdots \subseteq U_k}} {*}, \quad 
	\emptyset \mapsto {*}
\right). \]
\end{cor}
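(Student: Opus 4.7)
The plan is to invoke \autoref{tcofib_homeo} with $\bI$ taken to be the poset of nonempty geometric subspaces $\emptyset \subsetneq U \subseteq X$, and $F$ the tautological diagram $F(U) = U$ (for $\PT(X)$) or the constant one-point diagram (for $\ST(X)$). By \autoref{PT} the total homotopy cofibers of these two diagrams over $\bI$ compute $\PT(X)$ and $\ST(X)$ respectively, so it suffices to rewrite them as total homotopy cofibers over a cube.

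First I would define the dimension map $\dim \colon \bI \to [n]$, sending each geometric subspace to its dimension. The hypotheses of \autoref{tcofib_homeo} require that the preimage of $n$ be the single terminal object $X$, and that each smaller fiber $\dim^{-1}(k)$ contain no non-identity morphisms. Both claims are immediate from the description of geometric subspaces as intersections of linear subspaces of $\R^{n+1}$ with $X$: the only $n$-dimensional subspace is $X$ itself, and two distinct linear subspaces of the same dimension cannot be contained one in the other, so any non-trivial morphism in $\bI$ strictly increases dimension.

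With the hypotheses verified, \autoref{tcofib_homeo} identifies the total homotopy cofiber of $F$ over $\bI$ with the total homotopy cofiber of $\amalg_{\dim} F$ over the cube $\bJ$ of subsets of $[n-1]$. Unwinding the definitions, $\amalg_{\dim} F$ sends $S \subseteq [n-1]$ to $\coprod_{\dim c = S} F(i_0)$, where $c = [i_0 < \cdots < i_k]$ ranges over strict chains in $\bI \setminus \{X\}$, and sends $\emptyset$ to $F(X)$. For the tautological diagram this gives $\coprod U_0$ over chains with dimension set $S$, and $X$ at $\emptyset$; for the constant diagram it gives a discrete set of points indexed by these chains, and $*$ at $\emptyset$. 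This matches the two stated formulas exactly.

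There is no real obstacle here, as the result is essentially just an unpacking of \autoref{tcofib_homeo} in the specific poset of geometric subspaces. The only verification required beyond machinery already established in the section is the elementary observation that geometric subspaces of equal dimension are incomparable in the inclusion order, ensuring that the dimension map satisfies the hypothesis that each lower fiber be discrete.
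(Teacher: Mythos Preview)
Your proposal is correct and matches the paper's own reasoning: the corollary is stated as an immediate specialization of \autoref{tcofib_homeo} to the poset of nonempty geometric subspaces equipped with the dimension map, exactly as you do. The paper even flags this example explicitly when introducing the dimension hypothesis, so your verification that distinct subspaces of equal dimension are incomparable simply makes explicit what the paper leaves implicit.
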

This description is of fundamental importance in calculations, since taking the based homotopy orbits and reduced Thom spectra has the effect of taking unbased homotopy orbits and ordinary Thom spectrum inside the cube. See \autoref{sec:homology}.

\section{Proof of the main theorem}

In this section we prove \autoref{intro_main}, focusing first on the case where $X$ is either $H^n$ or $E^n$. Set $G = I(X)$. By \autoref{tangent_reduction}, it suffices to construct an equivalence of Borel $G$-spectra
\[ \Sigma^{X} K(\Pol{X}{1}) \simeq \PT(X) \]
where $\Sigma^{X}$ refers to the smash product with $S^{X}$, the one-point compactification of $X$.

\subsection{The main diagram}\label{sec:main_diagram}
Our proof proceeds by constructing the following zig-zag of equivalences of Borel $G$-spectra.
\begin{equation}\label{main_diagram}
\xymatrix @!C=7em{
	\Sigma^{X} K(\mc C) && \Sigma^\infty \PT(X) \\
	\Sigma^{X} \underset{\{ P_i \} \in \mathcal D}\hocolim\, K(\mc C_{\{P_i\}}) \ar[u]_-\sim \ar[d]^-\sim && \Sigma^\infty EG_+ \sma \underset{\{ P_i \} \in \mathcal D}\hocolim\, \bigvee_i P_i/\partial P_i \ar[u]_-\sim \ar[d]^-\sim \\
	\Sigma^{X} \underset{\{ P_i \} \in \mathcal D}\hocolim\, \prod_i K(\Fin) && \underset{\{ P_i \} \in \mathcal D}\hocolim\, \prod_i \Sigma^\infty P_i/\partial P_i \\
	& \Sigma^{X} \underset{\{ P_i \} \in \mathcal D}\hocolim\, \prod_i \Sph \ar[lu]_-\sim \ar[ru]^-\sim & 
}
\end{equation}
As before, $\mc C = \Pol{X}{1}$ denotes the category of polytopes in $X$ and inclusions. The homotopy colimits are all based homotopy colimits, hence the lack of the ``$u$'' decoration.

Recall from \autoref{sec:polytopes} that a set of geometric simplices is \emph{almost-disjoint} if their interiors are disjoint. Let $\mathcal D$ denote the set of finite collections of almost-disjoint geometric simplices in $X$. We give $\mc D$ a partial ordering by
\[ \{ P_i \} \to \{ Q_j \} \]
if each $P_i$ is almost-disjointly covered by a subset of the $Q_j$. Equivalently, if we can go from the $P_i$ to the $Q_j$ by weakly subdividing \emph{and} by adding extra simplices that are almost-disjoint from the existing ones.

Each collection $\{ P_i \} \in \mathcal D$ specifies a subcategory $\mc C_{\{P_i\}} \subseteq \mc C$, consisting of those polytopes that are unions of subsets of the collection $\{P_i\}$.

\begin{lem}
	The category $\mc C$ is the filtered colimit of the subcategories $\mc C_{\{P_i\}} \subseteq \mc C$.
\end{lem}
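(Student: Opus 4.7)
The plan is to verify two things: that $\mathcal D$ is a filtered (directed) poset, and that every object and every morphism of $\mc C$ eventually appears in one of the subcategories $\mc C_{\{P_i\}}$. Since each $\mc C_{\{P_i\}}$ is literally a subcategory of $\mc C$ and the transition maps in the diagram are subcategory inclusions, the filtered colimit is simply the increasing union of these subcategories inside $\mc C$, so these two checks suffice. I would also briefly note that for $\{P_i\} \to \{Q_j\}$ in $\mathcal D$ the induced inclusion $\mc C_{\{P_i\}} \hookrightarrow \mc C_{\{Q_j\}}$ is well-defined, which is immediate from the definition of the ordering.

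To show $\mathcal D$ is directed, given two collections $\{P_i\}$ and $\{Q_j\}$, I would form the finite collection of polytopes $\{P_i\} \cup \{Q_j\}$ and apply \autoref{triangulation_exists} to produce a geometric triangulation of $\bigcup_i P_i \cup \bigcup_j Q_j$ in which every $P_i$ and every $Q_j$ is a subcomplex. The $n$-dimensional simplices of this triangulation form a collection $\{R_k\} \in \mathcal D$, almost-disjoint by construction, and each $P_i$ (resp.\ $Q_j$) is the almost-disjoint union of the $R_k$ it contains. Hence both $\{P_i\} \to \{R_k\}$ and $\{Q_j\} \to \{R_k\}$ hold in $\mathcal D$.

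For coverage of objects, any polytope $P \in \mc C$ is top-dimensional by definition, so \autoref{triangulation_exists} applied to $\{P\}$ yields a triangulation of $P$ whose $n$-simplices form a collection $\{P_i\} \in \mathcal D$ with $P \in \mc C_{\{P_i\}}$. For a morphism $P \subseteq Q$ in $\mc C$, I would apply \autoref{triangulation_exists} to the pair $\{P, Q\}$ to obtain a triangulation of $Q$ in which $P$ is a subcomplex; the resulting collection of $n$-simplices lies in $\mathcal D$ and its subcategory contains $P$, $Q$, and the inclusion between them.

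I do not foresee a genuine obstacle: the entire argument rests on the common-triangulation statement \autoref{triangulation_exists}, which is already in hand, together with the trivial observation that a filtered colimit of subcategories of a fixed category equals their union. The only point requiring mild care is bookkeeping --- ensuring that the $R_k$ produced by the triangulation are themselves $n$-dimensional simplices (so that they are polytopes, not lower-dimensional faces) --- and this is exactly the output of \autoref{triangulation_exists} once one restricts to top-dimensional cells.
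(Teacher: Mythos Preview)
Your proposal is correct and follows essentially the same approach as the paper: use \autoref{triangulation_exists} to show that every object and morphism lies in some $\mc C_{\{P_i\}}$, and use common refinement of triangulations (which is \autoref{triangulation_common_refinement}, itself a consequence of \autoref{triangulation_exists}) to show the indexing poset $\mathcal D$ is directed. The paper's proof is just a terser version of what you wrote.
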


\begin{proof}
	This follows from \autoref{triangulation_exists}. Since triangulations exist, each object or morphism of $\mc C$ is in one of the categories $\mc C_{\{P_i\}}$. Since any two triangulations have a common refinement (\autoref{triangulation_common_refinement}), any two such subcategories $\mc C_{\{P_i\}}$ and $\mc C_{\{Q_j\}}$ are contained in a third subcategory $\mc C_{\{R_k\}}$.
\end{proof}
This in turn expresses the multisimplicial set \eqref{sck_defn} as a filtered colimit of the same sets for $\mc C_{\{P_i\}}$. We recall the standard fact
\begin{lem}
	Filtered colimits of simplicial sets are homotopy colimits.
\end{lem}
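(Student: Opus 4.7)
The plan is to show that for any filtered category $\bI$ and diagram $F\colon \bI \to \cat{sSet}$, the canonical comparison map $\hocolim_\bI F \to \colim_\bI F$ is a weak equivalence. I would combine two standard facts.

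First I would record that filtered colimits in $\cat{sSet}$ preserve weak equivalences. This reduces level-by-level to the classical fact that in $\cat{Set}$ filtered colimits commute with finite limits; applied degreewise it implies that the simplicial homotopy group functors $\pi_n$ (computed via maps out of the finite simplicial sets $\partial\Delta^n \subseteq \Delta^n$) commute with filtered colimits. Consequently $\colim_\bI$ is a homotopy-invariant functor on filtered $\bI$-diagrams of simplicial sets.

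Next I would realize $\hocolim_\bI F$ as the ordinary colimit of an objectwise-equivalent replacement. For each $i \in \bI$, let $\bar F(i)$ be the bar construction of $F$ over the over-category $\bI/i$ — explicitly, the diagonal of the bisimplicial set with $n$-simplices $\coprod_{i_0 \to \cdots \to i_n \to i} F(i_0)$. The augmentation $\bar F(i) \to F(i)$ is a weak equivalence because $\bI/i$ has $\id_i$ as a terminal object and hence contractible classifying space. A routine coend manipulation then identifies $\colim_\bI \bar F$ with the Bousfield-Kan model of $\hocolim_\bI F$ used in \autoref{sec:hocolim}.

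Combining these, $\hocolim_\bI F \cong \colim_\bI \bar F \xrightarrow{\sim} \colim_\bI F$, where the last arrow is a weak equivalence by the first step applied to the objectwise equivalence $\bar F \to F$. The main obstacle is the first step: verifying that $\colim_\bI$ is genuinely homotopical on filtered diagrams of simplicial sets, ultimately via the commutation of filtered colimits with finite limits. Once that is in place, the rest is a formal manipulation with the Bousfield-Kan replacement.
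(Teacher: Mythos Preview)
The paper does not actually prove this lemma; it merely introduces it with the phrase ``We recall the standard fact'' and states it without argument. So there is no paper proof to compare against.

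Your proposal is a correct and standard way to justify this fact. The two ingredients you isolate are exactly the right ones: (i) filtered colimits of simplicial sets are homotopically well-behaved because the simplicial sets $\partial\Delta^n$ and $\Delta^n$ used to detect weak equivalences are finite, so filtered colimits commute with $\pi_n$; and (ii) the Bousfield--Kan homotopy colimit is the ordinary colimit of the bar replacement $\bar F$, which is objectwise equivalent to $F$ via the extra degeneracy coming from the terminal object of $\bI/i$. Combining these gives the comparison map $\hocolim_\bI F \to \colim_\bI F$ as a filtered colimit of weak equivalences, hence a weak equivalence. One minor remark: in step (i) you should be a touch careful that weak equivalences of arbitrary (not necessarily Kan) simplicial sets are detected after applying $\Ex^\infty$ or geometric realization, both of which also commute with filtered colimits; but this is routine and does not affect the argument.
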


Therefore the induced map from the homotopy colimit to $K(\mc C)$ is an equivalence of spectra:
\[ \xymatrix{ \underset{\{ P_i \} \in \mathcal D}\hocolim\, K(\mc C_{\{P_i\}}) \ar[r]^-\sim & K(\mc C). } \]

This is the first equivalence in \eqref{main_diagram}. The second equivalence is
\begin{equation}\label{equiv_2}
\xymatrix{ \underset{\{ P_i \} \in \mathcal D}\hocolim\, K(\mc C_{\{P_i\}}) \ar[r]^-\sim & \underset{\{ P_i \} \in \mathcal D}\hocolim\, \prod_i K(\Fin) }
\end{equation}
where $\Fin$ denotes the category of finite sets. The category $\mathcal D$ acts on the product $\prod_i K(\Fin)$ by sending every weak subdivision to a diagonal map
\[ \xymatrix{ K(\Fin) \ar[r]^-{\Delta} & \prod K(\Fin) } \]
and every inclusion of a new simplex to the zero map $* \to K(\Fin)$.

For each $\{P_i\}_{i \in I}$ and each $i \in I$, consider the functor
\[ \mathcal W(\mc C_{\{P_i\}}) \to \mc W(\Pol{X^0}{1}) = \Fin \]
that takes each formal disjoint union of polytopes $\coprod_{j \in J} Q_j$, each $Q_j$ a union of some of the $P_i$, to the subset $J_i \subseteq J$ of those $Q_j$ that contain $P_i$. Put another way, this operation intersects each $Q_j$ with some fixed point in the interior of $P_i$ to give either a point or the empty set, and takes the union of the results over all $j \in J$ to give $J_i$. This is a functor because any almost-disjoint cover
\[ \xymatrix{ \coprod_{j \in J} Q_j \ar[r] & \coprod_{k \in K} R_k }. \]
induces for each $i \in I$ an isomorphism of sets $J_i \cong K_i$.

This functor induces a map of bisimplicial sets
\[ N_p \mc W\left( S^k_q \sma \mc C_{\{P_i\}} \right) \to N_p \mc W\left( S^k_q \sma \Pol{X^0}{1} \right) \]
for each $k$, giving a map of spectra
\begin{equation*}\label{atomic_factor}
	K(\mc C_{\{P_i\}}) \to K(\Fin).
\end{equation*}
Taking the product over $i$ gives
\begin{equation}\label{atomic}
	K(\mc C_{\{P_i\}}) \to \prod_{i \in I} K(\Fin).
\end{equation}
It is straightforward to check that this commutes strictly with the action of $\mc D$, giving the desired map of homotopy colimits \eqref{equiv_2}. The author learned the following fact from Zakharevich.
\begin{lem}\label{zak_lemma}
	The map \eqref{atomic} is an equivalence of spectra.
\end{lem}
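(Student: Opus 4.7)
My plan is to verify the equivalence levelwise in the Segal $\Gamma$-space model, by writing down an explicit candidate inverse functor along with a natural transformation witnessing one of the two composites as the identity (up to a natural $\mc W$-morphism). Concretely, both sides of \eqref{atomic} arise from $\Gamma$-spaces of the form $\mathbf{n}_+ \mapsto |N_\bullet \mc W(\mathbf{n}_+ \sma -)|$, so it suffices to exhibit at each $\mathbf{n}_+$ a functor $G$ in the reverse direction with $F \circ G = \id$ and a natural transformation $GF \Rightarrow \id$, all natural in $\mathbf{n}_+$.

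First I would unpack the combinatorics. The category $\mc C_{\{P_i\}}$ is the power-set poset of $I$: its objects are in bijection with subsets $S \subseteq I$ via $S \mapsto \bigcup_{i \in S} P_i$, and morphisms are inclusions of subsets. An object of $\mc W(\mathbf{n}_+ \sma \mc C_{\{P_i\}})$ is then a finite set $J$ with a label $a_j \in \mathbf{n}_+ \setminus \{*\}$ and a nonempty $S_j \subseteq I$ for each $j \in J$; a morphism to $(b_\ell, T_\ell)_{\ell \in L}$ consists of an indexing map $\alpha \colon J \to L$ with $a_j = b_{\alpha(j)}$ and $S_j \subseteq T_{\alpha(j)}$, such that for each $\ell \in L$ the subsets $\{ S_j : \alpha(j) = \ell \}$ partition $T_\ell$. (Here the almost-disjointness of the $\{P_i\}$ upgrades the weak-subdivision condition to a genuine partition.) In this language, the forward functor $F$ sends $(a_j, S_j)_{j \in J}$ to the $I$-indexed family whose $i$-component is the subtuple $(a_j)_{j : i \in S_j}$, which is precisely the operation of intersecting with a point in the interior of $P_i$, and the partition condition ensures $F$ is well-defined on morphisms.

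Next I would define the inverse $G$ by \emph{atomization}: given families $((a_{j_i})_{j_i \in J_i})_{i \in I}$, produce the object indexed by $\bigsqcup_{i \in I} J_i$ whose $(j_i, i)$-th entry is $(a_{j_i}, \{i\})$. An immediate check gives $F \circ G = \id$. The composite $G \circ F$ sends $(a_j, S_j)_{j \in J}$ to the maximally subdivided object indexed by $\{(j, i) : i \in S_j\}$ with singleton subsets $\{i\}$; the indexing map $(j, i) \mapsto j$ together with the partition $S_j = \bigsqcup_{i \in S_j} \{i\}$ then assembles into a natural $\mc W$-morphism $GF \Rightarrow \id$, whose naturality in the $\mc W$-morphism structure is a direct diagram chase. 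Since $F$, $G$, and this transformation all commute with the effect of a pointed map $\mathbf{n}_+ \to \mathbf{m}_+$ (relabeling or dropping pairs does not interact with the subset bookkeeping), we obtain a morphism and a natural transformation of Segal $\Gamma$-spaces. Any natural transformation induces a simplicial homotopy on nerves, so $|N_\bullet F|$ and $|N_\bullet G|$ are inverse homotopy equivalences at every $\mathbf{n}_+$, yielding a pointwise equivalence of special $\Gamma$-spaces and hence the desired equivalence of spectra.

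The main obstacle is not conceptual but purely organizational: carrying the indexing sets, label functions, and subset functions through the naturality squares for $GF \Rightarrow \id$, and making sure the partition condition on $\mc W$-morphisms is invoked at the right moments. The conceptual point underlying the argument is simply that, as a poset, $\mc C_{\{P_i\}}$ is the $|I|$-fold product of $\{\emptyset \to \bullet\} = \Pol{X^0}{1}$, and after applying $\mc W$ this ``free combination'' structure is what produces the product of copies of $K(\Fin)$ on the $K$-theory level, via Barratt-Priddy-Quillen in each factor.
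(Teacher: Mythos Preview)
Your argument is correct and is essentially the paper's own proof. The paper constructs the same ``atomization'' functor you call $G$ (sending $\{S_i\}_{i\in I}$ to $\coprod_{i\in I}\coprod_{S_i} P_i$) and packages the pair $(F,G)$ as an adjunction, whereas you verify $FG=\id$ and exhibit the counit $GF\Rightarrow\id$ directly; both routes produce the same natural transformation and hence the same homotopy equivalence on nerves.
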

\begin{proof}
It suffices to show for each based set $A$ that the product map
\[ \mathcal W(A \sma \mc C_{\{P_i\}}) \to \prod_i \mathcal W(A \sma \Pol{X^0}{1}) \]
is a right adjoint. We define a left adjoint by sending the tuple of finite sets $\{S_i\}_{i \in I}$ to the formal disjoint union $\coprod_{i \in I} \coprod_{S_i} P_i$ (and repeating this procedure for each $a \in A$).

To check this is an adjunction, it suffices to restrict attention to a single point of $A$. Suppose we have any almost-disjoint cover of the form
\begin{equation}\label{atomic_adj_2}
	\xymatrix{ \coprod_{i \in I} \coprod_{S_i} P_i \ar[r] & \coprod_{j \in J} Q_j }.
\end{equation}
The associated map of sets sends each $S_i$ to $J$. This is injective because $P_i$ cannot occur twice in an almost-disjoint cover. The image in $J$ is exactly those $Q_j$ containing $P_i$, since the map is a cover. It therefore gives an isomorphism of sets
\[ S_i \cong J_i \]
for each $i \in I$. Conversely, any isomorphism $S_i \cong J_i$ gives an almost-disjoint cover \eqref{atomic_adj_2}, because each $Q_j$ is being covered by $P_i$ exactly once for each $i$ such that $j \in J_i$. It is straightforward to check this identification is natural in the finite sets $S_i$ and in the polytopes $\{Q_j\}$.

%
\end{proof}

The third equivalence in \eqref{main_diagram} is:
\[ \xymatrix{ \underset{\{ P_i \} \in \mathcal D}\hocolim\, \prod_i \Sph \ar[r]^-\sim & \underset{\{ P_i \} \in \mathcal D}\hocolim\, \prod_i K(\Fin). } \]
We make the diagram on the left by having $\mc D$ act by diagonal and zero maps, just as it does on the right. Any fixed choice of equivalence $\Sph \to K(\Fin)$ from the Barratt-Priddy-Quillen theorem (\autoref{bpq}) induces an equivalence of diagrams, and therefore an equivalence on homotopy colimits.

We are now ready to prove the Solomon-Tits theorem for the spectrum $K(\mc C) = K(\Pol{X}{1})$, in other words \autoref{intro_st} from the introduction.
\begin{thm}\label{wedge_of_spheres}
	$K(\mc C)$ is non-equivariantly a wedge of sphere spectra $\Sph$.
\end{thm}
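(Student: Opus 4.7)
The plan is to combine the chain of equivalences already built in the left column of the main diagram (in particular the Barratt-Priddy-Quillen identification $K(\Fin)\simeq \Sph$) with a direct computation of the homotopy groups of $K(\mc C)$ via the filtered structure of $\mc D$. Those equivalences give $K(\mc C) \simeq \hocolim_{\{P_i\} \in \mc D} \prod_i \Sph$, so it suffices to show this homotopy colimit is a wedge of sphere spectra.

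First I would verify that $\mc D$ is filtered: given two collections $\{P_i\}$ and $\{Q_j\}$, \autoref{triangulation_exists} provides a common geometric triangulation of $\bigcup_i P_i \cup \bigcup_j Q_j$ in which each $P_i$ and each $Q_j$ appears as a subcomplex, yielding a simultaneous refinement. Because filtered homotopy colimits of spectra commute with $\pi_*$, and tensoring with $\pi_k^s$ commutes with filtered colimits of abelian groups, we get
\[ \pi_k K(\mc C) \cong \colim_{\{P_i\} \in \mc D} \bigoplus_i \pi_k^s \cong \pi_k^s \otimes \colim_{\{P_i\} \in \mc D} \bigoplus_i \Z. \]
Each transition map sends $[P_i]$ to $\sum_{Q_j \subseteq P_i} [Q_j]$, and simplices newly added in a refinement contribute fresh generators. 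Matching this directly against the all-simplex version of \autoref{pt_presentation} identifies this colimit with $\Pt(X)$, which is free abelian by the Solomon-Tits theorem for $\PT(X)$ established earlier.

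To upgrade this computation to a spectrum-level equivalence, the plan is to choose a basis $B$ of $\Pt(X) = \pi_0 K(\mc C)$, lift each $\alpha \in B$ to a map $f_\alpha\colon \Sph \to K(\mc C)$ representing $\alpha$, and form the wedge
\[ f = \bigvee_{\alpha \in B} f_\alpha \colon \bigvee_{\alpha \in B} \Sph \longrightarrow K(\mc C). \]
By construction $\pi_0(f)$ is an isomorphism. Since $f$ is a map of spectra, $\pi_*(f)$ is automatically $\pi_*^s$-linear, so the identification $\pi_k K(\mc C) \cong \pi_k^s \otimes \Pt(X) \cong \bigoplus_{\alpha \in B} \pi_k^s$ forces $\pi_k(f)$ to be an isomorphism for every $k$. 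Both source and target are connective, so Whitehead's theorem for spectra yields that $f$ is an equivalence.

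The main step requiring care is the identification $\colim_{\mc D} \bigoplus_i \Z \cong \Pt(X)$: one must match the transition maps (sums of identities on refined summands, together with zero contributions from stages predating the introduction of new simplices) with exactly the subdivision relations in the presentation of $\Pt(X)$. That the collections in $\mc D$ are required to consist of simplices rather than arbitrary polytopes is what lines this up directly with the all-simplex presentation.
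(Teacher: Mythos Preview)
Your proposal is correct and follows essentially the same route as the paper: identify $\pi_k K(\mc C)$ with $\pi_k(\Sph)\otimes \colim_{\mc D}\bigoplus_i \Z$, recognize the colimit as $\Pt(X)$ via \autoref{pt_presentation}, and then build a $\pi_*$-isomorphism from a wedge of spheres indexed by a basis of the free abelian group $\Pt(X)$. The only cosmetic difference is that you spell out the construction of the comparison map and invoke $\pi_*^s$-linearity explicitly, whereas the paper compresses this into the sentence ``the spectrum $K(\mc C)$ receives a $\pi_*$-isomorphism from a wedge of sphere spectra.'' (One nitpick: you don't need connectivity or Whitehead at the end---a $\pi_*$-isomorphism of spectra is already a stable equivalence by definition.)
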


We know from the introduction that $K_0(\mc C) = K_0(\Pol{X}{1}) = \Pt(X)^t$ is the free abelian polytope module, so the spheres are indexed by any basis for $\Pt(X)^t$ as an abelian group.

\begin{proof}
We use the above equivalences to simplify the homotopy groups as follows.
\[ 
	\pi_k(K(\mc C)) \cong \pi_k\left( \underset{\{ P_i \} \in \mathcal D}\hocolim\, \prod_i \Sph \right)
	\cong \underset{\{ P_i \} \in \mathcal D}\colim\, \left(\bigoplus_i \pi_k(\Sph) \right)
	\cong \pi_k(\Sph) \otimes_\Z \left( \underset{\{ P_i \} \in \mathcal D}\colim\, \bigoplus_i \Z \right).
\]
The last isomorphism arises because the maps in the colimit system of spectra are all sums of identity maps of the sphere spectrum, hence the map induced on $\pi_k$ agrees with the induced map on $\pi_0$, tensored with $\pi_k(\Sph)$.

The colimit inside the right-hand expression is isomorphic to $\Pt(X)^t$, by comparing the colimit over $\mc D$ to the presentation in \autoref{pt_presentation}. Since this group is free abelian, tensoring with $\pi_*(\Sph)$ gives a free module over $\pi_*(\Sph)$. Since the homotopy groups of $K(\mc C)$ are a free module over $\pi_*(\Sph)$, the spectrum $K(\mc C)$ receives a $\pi_*$-isomorphism from a wedge of sphere spectra, and is therefore equivalent to a wedge of sphere spectra.
\end{proof}

The fourth equivalence in \eqref{main_diagram} begins to use the suspension by $X$. We pass it into the colimit and past the finite product to get
\[ \xymatrix{ \Sigma^{X} \underset{\{ P_i \} \in \mathcal D}\hocolim\, \prod_i \Sph \ar[r]^-\sim & \underset{\{ P_i \} \in \mathcal D}\hocolim\, \prod_i \Sigma^\infty S^{X}. } \]
We further apply a Pontryagin-Thom collapse $S^{X} \to P_i/\partial P_i$. Recall this is defined to be the identity in the interior of $P_i$ (which makes sense since $P_i \subseteq X$), while everything else in $X$ is sent to the basepoint.

The collapse map is easily seen to be a degree-one map of spheres, and therefore an equivalence. It also commutes with the action of $\mathcal D$, the weak subdivision $\{P_i\} \to \{Q_j\}$ acting on $P_i/\partial P_i$ by products of collapse maps
\begin{equation}\label{one_collapse}
	P_i / \partial P_i \to Q_j / \partial Q_j
\end{equation}
for each $Q_j \subseteq P_i$. We get
\[ \xymatrix{ \underset{\{ P_i \} \in \mathcal D}\hocolim\, \prod_i \Sigma^\infty S^{X} \ar[r]^-\sim & \underset{\{ P_i \} \in \mathcal D}\hocolim\, \prod_i \Sigma^\infty P_i/\partial P_i, } \]
which finishes the fourth equivalence.

The fifth equivalence is the observation that the collapse maps \eqref{one_collapse} descend to maps of wedge sums
\[ \bigvee_i P_i / \partial P_i \to \bigvee_j Q_j / \partial Q_j, \]
because each point in $P_i$ goes to the interior of at most one $Q_j$ in the almost-disjoint cover of $P_i$. So we replace the finite products with finite wedges and pull the suspension spectrum out of the colimit, giving the fifth equivalence
\[ \xymatrix{ \Sigma^\infty \underset{\{ P_i \} \in \mathcal D}\hocolim\,  \bigvee_i P_i/\partial P_i \ar[r]^-\sim & \underset{\{ P_i \} \in \mathcal D}\hocolim\, \prod_i \Sigma^\infty P_i/\partial P_i. } \]

It remains to show that the space $\left(\underset{\{ P_i \} \in \mathcal D}\hocolim \bigvee_i P_i/\partial P_i\right)$ is stably equivalent to $\PT(X)$. This will require several more lemmas.

\subsection{Apartment-like maps}

For each simplex $P \subseteq X$, not necessarily of full dimension, let $\Tpl(P) \subseteq \Tpl(X)$ denote the subcomplex of all tuples of points in $X$ that all lie inside $P$.

\begin{df}\label{apartment-like}
	\begin{itemize}
		\item A map from a simplex $f\colon P \to \Tpl(X)$ is {\bf apartment-like} if for each face $D \subseteq P$, including $P$ itself, $f(D) \subseteq \Tpl(D)$.
		\item A map from a finite union of simplices $\cup_i P_i \to \Tpl(X)$ is apartment-like if this condition holds for every face of every simplex $P_i$.
		\item A map from the unbased homotopy colimit
		\[ \xymatrix{ \underset{\{ P_i \} \in \mathcal D}\uhocolim\,  \bigcup_i P_i \ar[r] & \Tpl(X) } \]
		is apartment-like if for each $k$-tuple of composable morphisms in $\mathcal D$ starting at $\{P_i\}$, the above condition holds on the corresponding piece of the homotopy colimit
		\[ \xymatrix{ \Delta^k \times \bigcup_i P_i \ar[r] & \Tpl(X) } \]
		for each point in $\Delta^k$.
	\end{itemize}
\end{df}

\begin{rmk}
	An earlier version of this paper uses the word ``introspective,'' but ``apartment-like'' is better terminology, because every apartment in the Tits complex $T(X)$ in the classical sense (see e.g.~\cite{solomon_tits,church_farb_putman_integrality}) is apartment-like in our sense.
\end{rmk}

\begin{lem}\label{apartment-like_contractible}
	For any finite union of simplices in $X$ (not necessarily almost-disjoint), the space of apartment-like maps $\cup_i P_i \to \Tpl(X)$ is weakly contractible.
\end{lem}

\begin{proof}
	For any simplex $D$, $\Tpl(D)$ is contractible since for instance the simplices can be coned off to any single point in $D$. For any finite list of simplices $\{D_i\}$ we conclude that
\[ \cap_i \Tpl(D_i) = \Tpl\left(\cap_i D_i\right) \]
is contractible.

Given a finite union of simplices $\cup_i P_i$, by \autoref{triangulation_exists} 
we may form a triangulation of the union with each $P_i$ a subcomplex. Let $\{Q_j\}$ be the simplices in this triangulation. For each face $D$ of one of the $Q_j$, if we consider those faces $D_{i,k} \subseteq \partial P_i$ containing $D$, then the apartment-like maps on $\cup_i P_i$ require $D$ to go into the intersection $\cap_{i,k} D_{i,k}$. This intersection contains $D$ but may be larger.
	
	For each $D$ separately, the space of such maps is clearly contractible. Also, if $D \subseteq D'$, then $\cap_{i,k} D_{i,k} \subseteq \cap_{i,k} D'_{i,k}$. So when we pass to a larger face, the subspace where it must be sent is also enlarged, never shrunk.
	
	We prove weak contractibility of the whole space by showing that any sphere $S^{m-1}$ of apartment-like maps can be extended to a disc $D^m$ of apartment-like maps. We do this by induction on the cells of the triangulation $\cup_j Q_j$. When adding a new cell $D$, we aim to specify a map
	\begin{equation}\label{apartment-like_step}
		\xymatrix{ D \times D^m \ar[r] & \Tpl(\cap_{i,k} D_{i,k}) }
	\end{equation}
	that is already given both on $D \times S^{m-1}$ and $\partial D \times D^m$. The map from $D \times S^{m-1}$ lands in $\Tpl(\cap_{i,k} D_{i,k})$ by assumption. The map from $\partial D \times D^m$ is composed of maps that are apartment-like on each face of $D$, so they land in a smaller space inside $\Tpl(\cap_{i,k} D_{i,k})$, so they land in $\Tpl(\cap_{i,k} D_{i,k})$. Therefore \eqref{apartment-like_step} is specified on the entire boundary, and we just need to extend it to the rest of $D \times D^m$. Since $\Tpl(\cap_{i,k} D_{i,k})$ is contractible, this extension exists, completing the induction.
\end{proof}

It is also straightforward to check:
\begin{lem}\label{apartment-like_weak_subdivision}
	If $\{Q_j\}$ is a weak subdivision of $\{P_i\}$ then any apartment-like map $\bigcup_j Q_j \to \Tpl(X)$ is also apartment-like as a map $\bigcup_i P_i \to \Tpl(X)$.
\end{lem}

\begin{lem}\label{apartment-like_hocolim}
	The space of apartment-like maps on the homotopy colimit
	\[ \underset{\{ P_i \} \in \mathcal D}\uhocolim\,  \bigcup_i P_i \to \Tpl(X) \]
	is weakly contractible. Any such map that is only specified on the 0-skeleton of the homotopy colimit (\autoref{skeletal}) extends to an apartment-like map on the entire homotopy colimit.
\end{lem}

\begin{proof}
	This is essentially the same induction as in \autoref{apartment-like_contractible}. To show that an $S^{m-1}$ of apartment-like maps cones off to a $D^m$ of such maps, we induct up the pieces of the form $\Delta^k \times \bigcup_i P_i$ in the homotopy colimit.
	
	At the inductive step, we are given an apartment-like map on $S^{m-1} \times \Delta^k \times \bigcup_i P_i$, and a map on $D^m \times \partial \Delta^k \times \bigcup_i P_i$ that came from the apartment-like maps at the previous stages of the induction. This latter map is apartment-like at each point in $D^m \times \partial \Delta^k$, using \autoref{apartment-like_weak_subdivision}, because on some of the faces the map comes from a weak subdivision of the $P_i$. All together, then, we have an apartment-like map on $\partial(D^m \times \Delta^k) \times \bigcup_i P_i$, and this extends to $D^m \times \Delta^k \times \bigcup_i P_i$ because apartment-like maps out of $\bigcup_i P_i$ are weakly contractible (\autoref{apartment-like_contractible}).
	
	The final claim follows by the same induction, with $m = 0$, so that $D^0 = *$ and $S^{-1} = \emptyset$.
\end{proof}

The following is a straightforward comparison of cell complex structures.
\begin{lem}\label{unbased_to_based}
	The canonical maps
	\[ \xymatrix{
		\underset{\{ P_i \} \in \mathcal D}\uhocolim\,  \bigcup_i \partial P_i \ar[r] &
		\underset{\{ P_i \} \in \mathcal D}\uhocolim\,  \bigcup_i P_i \ar[r] &
		\underset{\{ P_i \} \in \mathcal D}\hocolim\,  \bigvee_i P_i/\partial P_i
	} \]
	form both a cofiber sequence and a homotopy cofiber sequence.
\end{lem}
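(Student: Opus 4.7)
The plan is to use the skeletal description of the unbased homotopy colimit from \autoref{skeletal} and show the result by direct inspection of cell structures. The main point is that all three terms admit compatible cell decompositions indexed by strict chains in $\mathcal D$, and the quotient identifications line up on the nose.

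First I would observe that since the simplices in each collection $\{P_i\}$ are almost-disjoint, the quotient of topological spaces $(\bigcup_i P_i)/(\bigcup_i \partial P_i)$ is canonically homeomorphic to the wedge $\bigvee_i P_i/\partial P_i$, and this homeomorphism is natural in $\mathcal D$ (under both weak subdivisions and inclusions of new simplices, the latter corresponding to the zero map in the wedge). Next I would check that the inclusion
\[ \underset{\{ P_i \} \in \mathcal D}\uhocolim\,  \bigcup_i \partial P_i \ra \underset{\{ P_i \} \in \mathcal D}\uhocolim\,  \bigcup_i P_i \]
is a cofibration. This is immediate from the skeletal description: on each piece $\Delta^k \times \bigcup_i P_i$ attached for a strict chain $c_0 < \cdots < c_k$, the subspace $\Delta^k \times \bigcup_i \partial P_i$ (coming from the evaluation at $c_0$ of the sub-diagram $\bigcup_i \partial P_i$) is a subcomplex, and these subcomplexes are compatibly glued along the face attaching maps. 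Hence we get a CW subcomplex inclusion, which is a cofibration, and the strict cofiber computes the homotopy cofiber.

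Finally I would identify the strict cofiber with the third term. Taking the quotient cell by cell replaces each $\Delta^k \times \bigcup_i P_i$ by $\Delta^k \times (\bigcup_i P_i)/(\bigcup_i \partial P_i) = \Delta^k \times \bigvee_i P_i/\partial P_i$, and the face attaching maps descend to the collapsed pieces. The remaining identifications are exactly those produced by collapsing the unbased homotopy colimit of the basepoint sub-diagram to a single point: for each chain $c_0 < \cdots < c_k$, the fiber $\Delta^k \times \{\ast\}$ over the wedge point is identified to one common basepoint across all chains, because in the original hocolim these fibers all lived in $\uhocolim \bigcup_i \partial P_i$ which has been collapsed. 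This matches the definition of the based homotopy colimit as $\uhocolim$ with basepoint fibers quotiented to a point, yielding a canonical homeomorphism with the third term. Combining the cofibration property with this identification gives both a strict cofiber sequence and a homotopy cofiber sequence.

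No step is expected to be particularly delicate; the main thing to be careful about is keeping track of which basepoint identifications come from the subcomplex $\bigcup_i \partial P_i$ inside each $\bigcup_i P_i$ and which come from the unbased-to-based collapse across chains, and verifying that these two sources of identifications agree.
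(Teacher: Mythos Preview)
Your proposal is correct and is precisely the ``straightforward comparison of cell complex structures'' that the paper invokes without further detail. The skeletal description from \autoref{skeletal}, the identification $(\bigcup_i P_i)/(\bigcup_i \partial P_i) \cong \bigvee_i P_i/\partial P_i$ for almost-disjoint simplices, and the recognition of the based homotopy colimit as the quotient of the unbased one by the basepoint sub-diagram are exactly the ingredients the paper has in mind.
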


Each apartment-like map $\underset{\{ P_i \} \in \mathcal D}\uhocolim\,  \bigcup_i P_i \to \Tpl(X)$ sends the boundaries of each simplex $P_i$ into $\Tpl(X)^{n-1}$, and therefore induces a map of quotients
\begin{equation}\label{apartment-like_quotient}
	\xymatrix{ \underset{\{ P_i \} \in \mathcal D}\hocolim\,  \bigvee_i P_i/\partial P_i \ar[r] & \Tpl(X)/\Tpl(X)^{n-1}. }
\end{equation}

\begin{lem}
	Any map \eqref{apartment-like_quotient} arising from an apartment-like map, induces an equivalence on suspension spectra.
\end{lem}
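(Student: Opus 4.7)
Since the space of introspective maps on the homotopy colimit is weakly contractible by \autoref{introspective_hocolim}, any two introspective maps induce homotopic maps on the quotient, so the induced map on suspension spectra is independent of the choice. The plan is to prove that both the source and target of the map on suspension spectra are $(n-1)$-connected wedges of $n$-fold suspended sphere spectra with $\pi_n \cong \Pt(X)$, and then identify the map on $\pi_n$ with the identity.

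For the target, the Solomon-Tits theorem already proved above gives that $\Sigma^\infty\Tpl(X)/\Tpl(X)^{n-1} \simeq \Sigma^\infty \PT(X)$ is a wedge of $n$-fold suspended sphere spectra, with $\pi_n = \ti H_n(\PT(X)) = \Pt(X)$. For the source, combining the zig-zag of equivalences in \eqref{main_diagram} with \autoref{wedge_of_spheres} and the identification $S^X \simeq S^n$ shows that $\Sigma^\infty \underset{\mc D}\hocolim\, \bigvee_i P_i/\partial P_i$ is also a wedge of $n$-fold suspended sphere spectra. Both spectra are $(n-1)$-connected with free $\pi_n$, so by Whitehead's theorem it suffices to show the induced map is a $\pi_n$-isomorphism.

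Since $\mc D$ is filtered by \autoref{triangulation_exists} and \autoref{triangulation_common_refinement}, exactly as in the proof of \autoref{wedge_of_spheres} the stable homotopy groups commute with the homotopy colimit:
\[ \pi_n\left(\Sigma^\infty \underset{\mc D}\hocolim \bigvee_i P_i/\partial P_i\right) \;\cong\; \underset{\mc D}\colim\, \bigoplus_i \Z\cdot[P_i]. \]
The transition maps send $[P_i] \mapsto \sum_{Q_j \subseteq P_i} [Q_j]$ for weak subdivisions and $[P_i] \mapsto 0$ for adjunctions of new almost-disjoint simplices. Comparing with the simplex presentation of $\Pt(X)$ in \autoref{pt_presentation}, this colimit is canonically identified with $\Pt(X)$.

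The final step is to identify the map on $\pi_n$ with the identity on $\Pt(X)$. For each $\{P_i\} \in \mc D$, the introspective restriction $P_i \to \Tpl(X)$ factors through $\Tpl(P_i)$ and is the identity on the vertices of $P_i$, so the map on relative homology
\[ H_n(P_i, \partial P_i) \;\longrightarrow\; H_n(\Tpl(P_i), \Tpl(P_i) \cap \Tpl(X)^{n-1}) \;\longrightarrow\; \Pt(X) \]
sends the canonical fundamental class of $P_i$ to the generator $[P_i]$ in the presentation. The main obstacle is pinning down the sign and orientation conventions; this reduces to the observation that $\Tpl(P_i)$ is contractible, that $\Tpl(P_i) \cap \Tpl(X)^{n-1}$ has the homotopy type of $\partial P_i$ via the canonical map, and that any vertex-preserving introspective map is homotopic to the standard simplicial inclusion, which visibly hits the generator corresponding to $[P_i]$.
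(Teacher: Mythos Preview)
Your proposal is correct and follows essentially the same strategy as the paper: both sides are wedges of $n$-spheres, so it suffices to check the map on $\pi_n$ (equivalently $H_n$); the source's $\pi_n$ is computed as a filtered colimit giving $\Pt(X)$; and the map on $\pi_n$ is identified with the identity. The only real difference is in how the last step is carried out: the paper invokes the second part of \autoref{introspective_hocolim} to construct one specific introspective map on the whole homotopy colimit that sends each $P_i$ to the tautological simplex $[P_i] \in \Tpl(X)$, after which the homology computation is immediate. You instead argue that an arbitrary introspective map, restricted to a single $P_i$, is homotopic (through introspective maps on $P_i$) to the standard simplicial inclusion; this works equally well via \autoref{introspective_contractible}, though the paper's route is a bit cleaner and avoids the sign/orientation bookkeeping you flag.

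One small slip: when a morphism in $\mc D$ merely adjoins a new almost-disjoint simplex, the transition map on $\pi_n$ sends $[P_i] \mapsto [P_i]$, not to $0$ (each old $P_i$ is still one of the $Q_j$). The ``zero map'' you may be recalling from earlier in \eqref{main_diagram} refers to the new factor having no source, not to the old generators vanishing. This does not affect your colimit identification with $\Pt(X)$.
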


This, finally, finishes the sixth equivalence from \eqref{main_diagram}.

\begin{proof}
	By the second part of \autoref{apartment-like_hocolim}, we can build an apartment-like map that sends each $P_i$ to the simplex $[P_i]$ in $\Tpl(X)$. Since all apartment-like maps are homotopic, it suffices to prove that this one is an equivalence. We know that the suspension spectra of both sides of \eqref{apartment-like_quotient} are wedges of $n$-spheres $\Sph^n$, so it suffices to prove the map is an isomorphism on $n$th homology.

Since homology sends filtered homotopy colimits to colimits, we get a presentation of $H_n$ on the left-hand side with one generator for every $P_i$, modulo the relation that the generator for $P_i$ is the sum of the generators in any weak subdivision. This matches the presentation of $H_n$ of the right-hand side from \autoref{pt_presentation}. Furthermore the map that sends $P_i$ to $[P_i]$ respects this presentation. The map is therefore an isomorphism on homology, and is therefore an equivalence of spectra.
\end{proof}

\begin{rmk}\label{apartment-like_change}
	The definition of apartment-like is easily adapted from $\Tpl(X)$ to the equivalent space
	\[ \underset{\emptyset \subsetneq U^k \subseteq X}\uhocolim\, \Tpl(U^k) \]
	by replacing each instance of $\Tpl(D)$ with the subcomplex of this homotopy colimit
	\[ \underset{\emptyset \subsetneq U^k \subseteq \spa(D)}\uhocolim\, \Tpl(U^k \cap D). \]
	The key property is that the finite intersections of such complexes are contractible:
	\[ \underset{\emptyset \subsetneq U^k \subseteq \cap_i \spa(D)}\uhocolim\, \Tpl(U^k \cap (\cap_i D_i)). \]
	The category has $\cap_i \spa(D)$ as a terminal object, and at that object we get the contractible space $\Tpl(\cap_i D_i)$, hence this homotopy colimit is contractible.
	
	We will need this modification for the reduced spherical case, because then collapsing the copies of $\Tpl(U^k)$ to a point brings us to the Steinberg complex.
\end{rmk}

\subsection{Equivariance}
The final phase of the proof is to check that each of the equivalences we defined can be made equivariant with respect to the action of the isometry group $G = I(X)$. The group $G$ acts on the category $\mathcal D$ in the obvious way. It also acts on each diagram that we have defined over $\mathcal D$, bearing in mind that this action is nontrivial on the sums or products over $i$, rearranging them to match where each simplex $P_i$ goes under the action of the isometry group.

\begin{lem}\label{make_equivariant}
	Each of the maps in \eqref{main_diagram} can be made into a $G$-equivariant equivalence.
\end{lem}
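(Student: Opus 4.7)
The plan is to verify equivariance of each arrow in \eqref{main_diagram} separately; the zig-zag then becomes a zig-zag of equivalences of Borel $G$-spectra. Most of the arrows are strictly $G$-equivariant by construction. The poset $\mathcal D$ carries an obvious $G$-action, with $g \cdot \mc C_{\{P_i\}} = \mc C_{\{gP_i\}}$, and each of the diagrams on $\mathcal D$ appearing in \eqref{main_diagram} is $G$-equivariant in this sense. The first arrow is the canonical map from a filtered homotopy colimit to its value, the second is built from the atomic decomposition functors $\mc W(\mc C_{\{P_i\}}) \to \prod_i \Fin$, and the third uses the Barratt--Priddy--Quillen map $\Sph \to K(\Fin)$ coming from picking out the class of a singleton set. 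Since $G$ acts on the product $\prod_i$ only by permuting factors (matching $\{P_i\} \mapsto \{gP_i\}$), all three are strictly equivariant. The fourth and fifth arrows are built from the Pontryagin--Thom collapse $S^X \to P_i/\partial P_i$ and the product-to-wedge comparison; both depend only on the embeddings $P_i \hookrightarrow X$ and therefore transform correctly under $g \in G$.

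The one genuine subtlety is the sixth arrow, the introspective collapse. A choice of introspective map produces an equivalence
\[ \underset{\{P_i\} \in \mathcal D}\hocolim\, \bigvee_i P_i/\partial P_i \to \Tpl(X)/\Tpl(X)^{n-1}, \]
but while the space of such maps is weakly contractible (\autoref{introspective_hocolim}), no single choice is $G$-equivariant: pushing an introspective map forward along $g$ yields another introspective map, which is only homotopic, not equal, to the original. The plan is to handle this by parametrizing the choice of introspective map by $EG$, producing an equivariant map
\[ EG_+ \sma \underset{\{P_i\} \in \mathcal D}\hocolim\, \bigvee_i P_i/\partial P_i \to \PT(X). \]
This is the reason the $EG_+$ factor appears in the top-right corner of \eqref{main_diagram}. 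To construct this $EG$-family, I would redo the inductive extension argument of \autoref{introspective_hocolim} on a $G$-CW model of $EG$ smashed with the homotopy colimit, making all choices $G$-equivariantly. At each cell attachment the obstruction lies in a space of introspective maps on a finite union of simplices, which is weakly contractible by \autoref{introspective_contractible}; because $G$ acts freely on the cells of $EG$, equivariant extensions exist at each stage.

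Finally, I need to check that inserting $EG_+ \sma (-)$ in the right-hand column does not disturb the other equivalences there. The arrow $EG_+ \sma \hocolim \bigvee \to \hocolim \prod \Sigma^\infty$ is the previously constructed (non-equivariant) equivalence smashed with $EG_+$; since $EG_+$ is a $G$-CW complex, smashing with it preserves Borel equivalences by the same argument as in \autoref{suspend_invertible}. The main obstacle is the equivariant refinement of the introspective argument; the remaining verifications are routine naturality checks that can be carried out one arrow at a time.
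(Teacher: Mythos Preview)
Your proposal is correct and follows essentially the same route as the paper. The paper's proof is terser: it observes that the space of introspective maps carries a conjugation $G$-action and is weakly contractible, so there is an equivariant map $EG \to (\text{introspective maps})$ whose adjoint gives the desired equivariant map on $EG_+ \sma (\hocolim)$; your inductive construction over a $G$-CW model of $EG$ is exactly how one proves that such an equivariant map exists, so the two arguments coincide.
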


\begin{proof}
For almost all of the maps, the map as we defined it is equivariant, and the check of equivariance is straightforward.

The last map, however, is not usually equivariant. Instead, the space of apartment-like maps from \autoref{apartment-like_hocolim} has an action of $G$ by conjugation. In other words, a conjugate of an apartment-like map is again apartment-like.

It follows there is an equivariant map from $EG$ into the space of apartment-like maps. The adjoint of this is a $G$-equivariant map
\[ \xymatrix{ EG \times \underset{\{ P_i \} \in \mathcal D}\uhocolim\,  \bigcup_i P_i \ar[r] & \Tpl(X). } \]
On quotients, this induces an equivariant map
\[ \xymatrix{ EG_+ \sma \underset{\{ P_i \} \in \mathcal D}\hocolim\,  \bigvee_i P_i/\partial P_i \ar[r]^-\sim & \Tpl(X)/\Tpl(X)^{n-1} } \]
which is an equivalence because it is an equivalence for any point of $EG$, and $EG$ is contractible.
\end{proof}

In summary, \eqref{main_diagram} is a zig-zag of equivalences of Borel $G$-spectra. This finishes the proof of \autoref{intro_main} in the Euclidean and hyperbolic cases.

\subsection{The spherical case}

Let $W$ be a fixed finite-dimensional inner product space of dimension $n+1$. Let $S(W)$ be its unit sphere and $S^W$ its one-point compactification. The proof of \autoref{intro_main} detailed in this section applies to this case as well, with a small modification:
\begin{itemize}
	\item We suspend by $S^W$ instead of $S^{X}$. 
	\item In the Pontryagin-Thom collapse step, for each simplex $P_i \subseteq S(W)$ we use the collapse map
	\[ S^W \to \Sigma(P_i / \partial P_i). \]
	We model $\Sigma(P_i / \partial P_i)$ as the region in $S^W$ consisting of 0, $\infty$, and all rays from the origin through $P_i$, modulo the boundary of this region. Note this collapse is still an equivalence because it is a degree-one map of spheres.
\end{itemize}
The resulting diagram of suspensions $\Sigma(P_i / \partial P_i)$ is the suspension of the diagram of quotients $P_i / \partial P_i$ we considered before, so we can pull the suspension out of the homotopy colimit:
\[ \underset{\{ P_i \} \in \mathcal D}\hocolim\, \bigvee_i \Sigma(P_i/\partial P_i) \simeq \Sigma \left( \underset{\{ P_i \} \in \mathcal D}\hocolim\, \bigvee_i P_i/\partial P_i \right). \]
The remaining discussion of apartment-like maps is unchanged.

All together, this gives a Borel equivalence to the suspension
\[ \Sigma^W K(\Pol{S(W)}{1}) \simeq \Sigma^\infty \Sigma \PT(W), \]
recalling the convention in \autoref{drop_s}. By \autoref{tangent_reduction}, this completes the proof of \autoref{intro_main} in the spherical case.

\section{Comparison to group homology}\label{sec:homology}

In this section, we use the main theorem to establish the relationship between the higher scissors congruence groups and group homology.

We will need to work in a model of spectra with smash product, such as symmetric or orthogonal spectra. See e.g. \cite{schwede_ss,mmss} for details.

Let $HA$ denote the Eilenberg-Maclane spectrum for the abelian group $A$. This is characterized by the property that its homotopy groups consist only of $A$ in degree 0.

If $A$ has a left action of $G$ through homomorphisms of abelian groups, then $HA$ becomes a Borel $G$-spectrum. Again, it is characterized up to equivalence of Borel spectra by its lone homotopy group $A$, with its $G$-action. We recall that group homology can be defined by taking homotopy orbits of Eilenberg-Maclane spectra:
\begin{lem}\label{hg_is_group_homology}
	For any $\Z[G]$-module $A$, there is a canonical isomorphism
\[ \pi_n( (HA)_{hG} ) \cong H_n(G;A). \]
\end{lem}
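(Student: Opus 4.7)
The plan is to compute the homotopy groups of $(HA)_{hG}$ directly by resolving $EG_+$ as a simplicial $G$-space and reading off the standard bar resolution. Specifically, model $EG_+$ as the realization of the two-sided bar construction $B_\bullet(S^0, G_+, G_+)$, whose level $n$ is $G^{n+1}_+$, with face maps given by the multiplication in $G$ and degeneracies by insertion of the identity. This is a standard free $G$-CW model. Smashing with a cofibrant replacement of $HA$ in the projective model structure on Borel $G$-spectra set up just before this lemma yields a levelwise equivalence
\[ (HA)_{hG} \;\simeq\; \left| [n] \mapsto G_+^{\wedge n} \wedge HA \right|. \]

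Next, I would apply the spectral sequence of a simplicial spectrum (equivalently, filter by simplicial skeleta), giving
\[ E^1_{p,q} \;=\; \pi_q\bigl( G_+^{\wedge p} \wedge HA \bigr) \;\Longrightarrow\; \pi_{p+q}\bigl((HA)_{hG}\bigr). \]
Because $HA$ has homotopy concentrated in degree zero and $G_+^{\wedge p}$ is a wedge of copies of $S^0$, only the row $q=0$ is nonzero, where
\[ E^1_{p,0} \;\cong\; \Z[G]^{\otimes p} \otimes_{\Z} A. \]
Reading the face maps through the identifications shows that the associated chain complex on this row is precisely the two-sided bar resolution $B_\bullet(\Z, \Z[G], A)$, whose homology is $\mathrm{Tor}^{\Z[G]}_*(\Z, A) = H_*(G; A)$ by definition. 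Since only one row is nonzero, the spectral sequence collapses on the $E^2$ page, giving the required canonical isomorphism
\[ \pi_n\bigl((HA)_{hG}\bigr) \;\cong\; H_n(G; A). \]

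The main obstacle will be bookkeeping for the action: checking that the face maps of the geometric bar construction really induce the face maps of the algebraic bar resolution, in particular that the outer face map recovers the $\Z[G]$-module structure on $A$ coming from the $G$-action on $HA$. Technical convergence issues, namely that the smash product in the above realization actually computes derived homotopy orbits and that the simplicial spectrum is Reedy cofibrant, are handled by choosing cofibrant models in the framework of symmetric or orthogonal $G$-spectra described in the preliminary section.
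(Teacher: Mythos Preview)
Your proposal is correct and is precisely a spelled-out version of the ``homotopy orbit spectral sequence'' approach that the paper alludes to but does not write out; the paper treats the lemma as well known and merely cites this spectral sequence (and, alternatively, the Shipley--Schwede equivalence between chain complexes and $H\Z$-module spectra) rather than giving a proof. Your bar-construction model for $EG$ and the identification of the $E^1$-page with the bar resolution of $A$ is exactly how that spectral sequence argument goes, and the collapse is for the reason you state.
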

This is well known. For instance, it follows from the equivalence of $\infty$-categories between chain complexes and $H\Z$-module spectra from \cite{ss_modules,shipley_algebras}, or the homotopy orbit spectral sequence for stable homotopy groups.


Recall that the homology of a spectrum is computed by taking the smash product with $H\Z$.
By \autoref{wedge_of_spheres} for all three of the geometries, $K(\Pol{X}{1})$ is equivalent to a wedge of sphere spectra $\Sph$, indexed by any basis for the polytope module $\Pt(X)^t$. Therefore, after smashing with $H\Z$ we get a wedge of Eilenberg-Maclane spectra. Since homotopy groups turn wedge sums into direct sums, this is also an Eilenberg-Maclane spectrum, whose sole homotopy group is the abelian group $\Pt(X)^t$:
\begin{align*}
	K(\Pol{X}{1}) \sma H\Z &\simeq H(K_0(\Pol{X}{1})) \simeq H(\Pt(X)^t).
\end{align*}
The operation $- \sma (H\Z)$ commutes with homotopy orbits, and therefore
\begin{align*}
	K(\Pol{X}{G}) \sma H\Z &\simeq K(\Pol{X}{1})_{hG} \sma H\Z \\
	&\simeq (K(\Pol{X}{1}) \sma H\Z)_{hG} \\
	&\simeq H(\Pt(X)^t)_{hG}.
\end{align*}
Taking homotopy groups and using \autoref{hg_is_group_homology}, we conclude
\begin{align*}
	H_m(K(\Pol{X}{G})) &\cong H_i(G; \Pt(X)^t).
\end{align*}
Furthermore the $G$-action on $\Pt(X)^t$ is exactly the action on $K_0(\Pol{X}{1})$ that we described in \autoref{twist}. Alternatively, if we use the main theorem and describe $K_0(\Pol{X}{1})$ as a Thom spectrum, it is the action-with-a-twist described in \autoref{twist2}. Either way, this finishes the proof of \autoref{intro_homology}.


\begin{rmk}
	The above proof uses only the Solomon-Tits result, \autoref{wedge_of_spheres}. We can give a second proof based on the stronger result that $K(\Pol{X}{G})$ is a Thom spectrum: by the Thom isomorphism, the homology of $\PT_{hG}^{-TX}$ is a shift of the homology of $\PT(X)_{hG}$ with a twist. Since $\PT(X)$ is a wedge of spheres, the homotopy orbit spectral sequence for $\PT(X)_{hG}$ is also concentrated on a single line, so it collapses, giving the result.
\end{rmk}
	
We next turn to the proof of \autoref{intro_translation_homology}. Let $X = E^n$. We first handle the case where $G = T(n)$ is the translation group.

Recall that the rationalization of a spectrum is its smash product with $H\Q$. This has the effect of tensoring all the homotopy groups with $\Q$. A spectrum $E$ is rational if the map
\[ E \cong E \sma \Sph \to E \sma H\Q \]
is an equivalence. This holds iff the homotopy groups $\pi_*(E)$ are rational vector spaces.

\begin{prop}\label{rationality_proof}
	The spectrum $K(\Pol{E^n}{T(n)})$ is rational.
\end{prop}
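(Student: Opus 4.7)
The plan is to exploit a scaling self-equivalence of the spectrum. For each integer $k \geq 2$, dilation of $E^n$ by factor $k$ (centered at any fixed basepoint) sends polytopes to polytopes and intertwines the translation action via the automorphism $\sigma_k\colon t \mapsto kt$ of $T(n)$, giving a functor $\mu_k\colon \Pol{E^n}{T(n)} \to \Pol{E^n}{T(n)}$. Since $\mu_k$ is invertible, with inverse $\mu_{1/k}$, it induces a self-equivalence $\mu_k^*$ of the spectrum $K(\Pol{E^n}{T(n)})$.

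I would first verify that $\mu_k^*$ acts on $K_0 = \Pt(E^n)_{T(n)}$ as multiplication by $k^n$: a rectangular grid weakly subdivides $kP$ into $k^n$ translates of $P$, so $[kP] = \sum_v[P+v] = k^n[P]$ modulo translations. For higher $K_r$, I would apply \autoref{intro_main}. Since $TE^n$ is $T(n)$-equivariantly trivial for the translation action, the main theorem simplifies to $K(\Pol{E^n}{T(n)}) \simeq \Sigma^{-n}\PT(E^n)_{hT(n)}$. The Solomon-Tits theorem (\autoref{wedge_of_spheres}) collapses the Serre spectral sequence of $\PT(E^n) \to \PT(E^n)_{hT(n)} \to BT(n)$ onto the column $q = n$, yielding $K_r(\Pol{E^n}{T(n)}) \cong H_r(T(n); \Pt(E^n))$. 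Under $\mu_k^*$, the twist $\sigma_k$ acts as $k^r$ on $H_r(BT(n);\Z) = \Lambda_\Z^r(T(n))$, while the dilation on the coefficient module contributes the additional factor $k^n$ absorbed through the coinvariants (extending the $K_0$ computation). Together $\mu_k^*$ acts on $K_r$ as multiplication by $k^{n+r}$.

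Since $\mu_k^*$ is an automorphism, multiplication by $k^{n+r}$ is invertible on $K_r$ for every $k \geq 2$; taking $k$ to be each prime shows that $K_r$ is uniquely divisible by every prime, hence a $\Q$-vector space. The main technical obstacle is rigorously pinning down the $k^n$ factor on the spectral sequence: the $k^r$ from the base $BT(n)$ is standard, but the coefficient-side contribution requires carefully tracking how the dilation interacts with subdivision and the $T(n)$-action on $\PT(E^n)$, presumably via a $T(n)$-equivariant cell structure.
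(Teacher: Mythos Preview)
Your dilation strategy is a natural idea and is close in spirit to how Dupont proves the group-homology analogue, but the execution contains a genuine error. The claim that $\mu_k^*$ acts on $K_r$ (or on $H_r(T(n);\Pt(E^n))$) as multiplication by $k^{n+r}$ is false. The problem is exactly where you flagged it: dilation on the coefficient module $\Pt(E^n)$ is \emph{not} multiplication by $k^n$. The relation $[kP]=k^n[P]$ only holds after passing to $T(n)$-coinvariants; in $\Pt(E^n)$ itself, $[kP]$ is a sum of $k^n$ \emph{distinct} translates of $[P]$, so dilation is an automorphism that permutes and sums generators rather than scaling. As the paper records in \autoref{translational_case}, the dilation-by-$k$ operator on $H_m(T(n);\Pt(E^n))$ has eigenvalues $k^{m+1},\ldots,k^{m+n}$, not just $k^{m+n}$. (There is also a smaller slip: your Serre spectral sequence computes the \emph{homology} of the Thom spectrum, not $K_r=\pi_r$; this is harmless since rational homology implies rationality of the spectrum, but it should be stated correctly.)

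The strategy can be salvaged, since all eigenvalues are still positive powers of $k$, so a filtration argument shows each graded piece is uniquely $k$-divisible and hence so is the whole. But producing such a filtration without already knowing Dupont's splitting essentially forces you through the total-cofiber decomposition of $\PT(E^n)_{hT(n)}$ into classifying spaces $BT(V_0)$---and at that point the dilation is superfluous, because $\tilde H_*(BT(V_0))\cong\Lambda^*_\Q(V_0)$ is already rational by \cite[4.7]{dupont_book}. This is exactly the paper's argument: express $\PT(E^n)_{hT(n)}$ as a cubical total cofiber with vertices $\coprod BT(V_0)$, take one cofiber to get wedges $\bigvee BT(V_0)$ with rational reduced homology, and observe that rationality survives the remaining cofibers. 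The paper's route is both shorter and avoids the eigenvalue analysis entirely.
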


\begin{proof}
	The bundle $TE^n$ is trivial as a $T(n)$-bundle so we get
	\[ K(\Pol{E^n}{T(n)}) \simeq \Sigma^{-n} \PT(E^n)_{hT(n)}. \]
	We use the total homotopy cofiber description of $\PT(E^n) \simeq \ST(E^n)$ from \autoref{PT_cube}:
	\[ \PT(E^n) \simeq \tcofib\left(
		S \mapsto \coprod_{\underset{\dim = S}{U_0 \subseteq \cdots \subseteq U_k}} {*}, \quad 
	\emptyset \mapsto {*}
	\right). \]
	The homotopy orbits can be passed inside the cube, so that each vertex is a homotopy orbit space of a single group orbit. Recall that the Shapiro Lemma allows us to simplify such spaces by the formula
	\[ (G/H)_{hG} \simeq BH \]
	where $BH$ is the classifying space of $H \leq G$. All together this gives
	\begin{equation}\label{translation_presentation}
	\PT(E^n)_{hT(n)} = \tcofib\left(
		S \mapsto \coprod_{\underset{\dim = S}{V_0 \subseteq \cdots \subseteq V_k}} BT(V_0), \quad 
	\emptyset \mapsto BT(n)
	\right),
	\end{equation}
	where now each of the spaces $V_i$ is a linear subspace passing through the origin, with translation group $T(V_i)$.
	
	Recall that the total cofiber of a cube is obtained by iteratively taking homotopy cofibers in each direction. If the spaces are unbased, we add disjoint basepoints to them first before performing this operation. Equivalently, when we iteratively take homotopy cofiber, we make the first homotopy cofiber unbased (i.e. the mapping cone), and then the subsequent ones are all based (the reduced mapping cone).
	
	In this case, let us take the first homotopy cofiber in the direction of the element $0 \in [n-1]$. Each of the maps is of the form
	\[ \xymatrix{
		\coprod_{\underset{\dim = S}{V_0 \subseteq \cdots \subseteq V_k}} {*} \ar[r] & \coprod_{\underset{\dim = S}{V_0 \subseteq \cdots \subseteq V_k}} BT(V_0)
	} \]
	for some subset $S \subseteq \{1,\ldots,n-1\}$. The cofiber of this map is the wedge sum
	\[ \bigvee_{\underset{\dim = S}{V_0 \subseteq \cdots \subseteq V_k}} BT(V_0). \]
	This cofiber is a based space whose reduced integral homology groups are all rational \cite[4.7]{dupont_book}. The remaining cofibers will preserve this property, hence $\PT(E^n)_{hT(n)}$ has all reduced homology groups rational. It follows by an easy application of the Atiyah-Hirzebruch spectral sequence that its suspension spectrum is a rational spectrum.
\end{proof}

\begin{prop}\cite[4.10a]{dupont_book}
	The homology groups $H_i(T(n);\Pt(X)^t)$ are rational.
\end{prop}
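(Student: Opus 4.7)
The plan is to bootstrap from \autoref{rationality_proof}: having already established that every reduced integral homology group of $\PT(E^n)_{hT(n)}$ is rational, we will transport this onto $H_*(T(n);\Pt(E^n))$ via the homotopy-orbit (Cartan--Leray) spectral sequence.

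Concretely, I would invoke
\[ E^2_{p,q} = H_p\bigl(T(n);\,\ti H_q(\PT(E^n))\bigr) \Longrightarrow \ti H_{p+q}\bigl(\PT(E^n)_{hT(n)}\bigr). \]
By the Solomon--Tits variant recalled in \autoref{sec:complexes}, $\PT(E^n)$ is equivalent to a wedge of $n$-spheres, so its reduced homology is concentrated in degree $n$ where it equals $\Pt(E^n)$, with the standard untwisted $T(n)$-action (every translation is orientation-preserving, cf.\ \autoref{twist}). The spectral sequence therefore collapses onto the single row $q=n$, yielding
\[ H_p\bigl(T(n);\Pt(E^n)\bigr) \;\cong\; \ti H_{p+n}\bigl(\PT(E^n)_{hT(n)}\bigr) \]
for every $p \ge 0$.

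The right-hand side is rational by the argument given inside the proof of \autoref{rationality_proof}: the total-homotopy-cofiber presentation \eqref{translation_presentation} writes $\PT(E^n)_{hT(n)}$ as an iterated mapping cone built from wedges of classifying spaces $BT(V_0)$, each of which has reduced integral homology equal to exterior powers of a $\Q$-vector space. Rationality of reduced homology is preserved under formation of based mapping cones via the long exact sequence, so the iterated total cofiber is rational in every degree. Combining this with the displayed isomorphism yields the desired rationality.

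I do not expect a serious obstacle. The only delicate point is bookkeeping around the coefficient system on the $E^2$-page: one must verify that the $T(n)$-module appearing there is genuinely $\Pt(E^n)$ and not a sign-twist, which is immediate because $T(n)$ acts by orientation-preserving isometries, and that we are comparing reduced (rather than unreduced) homology groups throughout, which matches the based convention used for $\PT(E^n)_{hT(n)}$ in \autoref{rationality_proof}.
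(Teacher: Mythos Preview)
The paper does not supply its own proof of this proposition: it is stated with a bare citation to \cite[4.10a]{dupont_book} and no argument is given. Your proposal is correct and in fact furnishes a self-contained proof that the paper could have included. The homotopy-orbit spectral sequence degenerates exactly as you say, because $\PT(E^n)$ has reduced homology concentrated in a single degree, and the identification of the coefficient module with $\Pt(E^n)$ (no sign twist) is right since translations preserve orientation.

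It is worth noting a structural difference. The paper treats the two rationality statements---that of $K(\Pol{E^n}{T(n)})$ in \autoref{rationality_proof} and that of $H_*(T(n);\Pt(E^n))$ here---as logically independent inputs, one proved internally and one imported from Dupont, and only afterwards combines them with the rational isomorphism of \autoref{intro_homology} to obtain the integral isomorphism of \autoref{intro_translation_homology}. Your argument shows more: the single-row collapse gives an \emph{integral} isomorphism $H_p(T(n);\Pt(E^n)) \cong \ti H_{p+n}(\PT(E^n)_{hT(n)})$ directly, so the second rationality statement is an immediate corollary of the first, and the appeal to Dupont becomes redundant. This is a genuine simplification of the logical flow of \autoref{sec:homology}.
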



\begin{prop}
	The above two rationality results also hold for any subgroup $G \leq E(n)$ containing $T(n)$.
\end{prop}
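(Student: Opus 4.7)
The plan is to bootstrap from the $G = T(n)$ case using the fact that $T(n)$ is normal in $E(n)$, so any subgroup $G \leq E(n)$ containing $T(n)$ fits into a split extension $1 \to T(n) \to G \to H \to 1$ with $H \leq O(n)$. The strategy for both the spectrum and the group homology is then a two-step descent: first handle $T(n)$, then take $H$-coinvariants, using that all of the $T(n)$-level information is already known to be rational.

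For the spectrum rationality, I would apply the Farrell-Jones isomorphism (\autoref{farrell_jones}) to rewrite $K(\Pol{E^n}{G}) \simeq K(\Pol{E^n}{1})_{hG}$, and then use that for a normal subgroup $N \trianglelefteq G$ with quotient $Q$ the homotopy orbits factor as $X_{hG} \simeq (X_{hN})_{hQ}$. This gives
\[ K(\Pol{E^n}{G}) \simeq (K(\Pol{E^n}{1})_{hT(n)})_{hH} \simeq K(\Pol{E^n}{T(n)})_{hH}. \]
By the previous proposition, $K(\Pol{E^n}{T(n)})$ is rational, i.e.\ equivalent to its smash product with $H\Q$. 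Since smashing with $H\Q$ commutes with homotopy orbits, the Borel construction on a rational Borel $H$-spectrum is again rational, which finishes the spectrum case.

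For the group homology rationality, I would use the Lyndon-Hochschild-Serre spectral sequence of the extension $1 \to T(n) \to G \to H \to 1$ with coefficients in $\Pt(E^n)$:
\[ E^2_{p,q} = H_p(H; H_q(T(n); \Pt(E^n))) \Longrightarrow H_{p+q}(G; \Pt(E^n)). \]
By the previous rationality statement, each $H_q(T(n); \Pt(E^n))$ is a $\Q$-vector space, and group homology of a rational $H$-module is again rational (since $\Z$-tensor products and $\Tor$ with a $\Q$-module yield $\Q$-modules). The entire $E^2$ page therefore consists of rational vector spaces, so the abutment $H_{p+q}(G; \Pt(E^n))$ is rational as well. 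I do not expect a real obstacle here; the only subtlety is verifying that the two-stage homotopy orbit decomposition is compatible with the Borel $G$-spectrum structure on $K(\Pol{E^n}{1})$, which is a formality since $T(n)$ acts through restriction of the $E(n)$-action and $H$ acts on the resulting $T(n)$-homotopy orbits.
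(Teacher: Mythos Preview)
Your argument is correct and matches the paper's proof almost exactly. The only difference is cosmetic: for the group homology statement, the paper reuses the same iterated homotopy orbit argument applied to the Eilenberg--MacLane spectrum $H(\Pt(E^n))$ (via \autoref{hg_is_group_homology}), whereas you invoke the Lyndon--Hochschild--Serre spectral sequence directly. These are the same argument in two languages, since the homotopy orbit spectral sequence for $H(\Pt(E^n))_{hG}$ \emph{is} the LHS spectral sequence; the paper's phrasing is a bit more uniform (one argument covers both claims), while yours is slightly more elementary in that it avoids any appeal to spectra for the homological statement.
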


\begin{proof}
	Any such group contains $T(n)$ as a normal subgroup, because $T(n)$ is normal in $E(n)$. Therefore the homotopy $G$-orbits can be written as iterated homotopy orbits
	\[ K(\Pol{X}{1})_{hG} \simeq \left( K(\Pol{X}{1})_{hT(n)}\right)_{h(G/T(n))}. \]
	Since the homotopy orbits on the inside are rational, this is equivalent to
	\[ \left( K(\Pol{X}{1})_{hT(n)} \sma H\Q \right)_{h(G/T(n))}  \simeq \left( K(\Pol{X}{1})_{hT(n)} \right)_{h(G/T(n))} \sma H\Q \]
	and is therefore rational.
	
	The same reasoning applies to the homology groups $H_i(G;\Pt(X)^t)$ since they are the homotopy groups of the homotopy orbit spectrum $H(\Pt(X)^t)_{hG}$.
\end{proof}

In summary, for $G \geq T(n)$ we have
\[ K_i(\Pol{X}{G}) \cong H_i(G;\Pt(X)^t) \]
because both sides are rational and we know they are equivalent rationally (\autoref{intro_homology}). This finishes the proof of  \autoref{intro_translation_homology}.

\section{The reduced spherical case}\label{sec:reduced}

In this section we introduce the reduced spherical $K$-theory spectrum and prove \autoref{intro_reduced}.

Let $V \subseteq W$ be an inclusion of finite-dimensional inner product spaces with orthogonal complement $V^\perp$. We define a suspension map
\[ \xymatrix{ K(\Pol{S(V)}{1}) \ar[r]^-\Sigma & K(\Pol{S(W)}{1}) } \]
by taking every polytope $P \subseteq S(V)$ to the join $P * S(V^\perp)$, regarded as a subset of $S(W)$ by drawing the straight lines in $W$ and then projecting to $S(W)$. The lines never pass through zero because each one joins a nonzero point in $V$ to a nonzero point in the complement $V^\perp$. This operation induces a functor on the category of polytopes
\begin{equation}\label{suspension_on_category}
	\xymatrix{ \Pol{S(V)}{1} \ar[r]^-\Sigma & \Pol{S(W)}{1}. }
\end{equation}
This, in turn, induces a map on $K$-theory spectra.

The suspension maps respect composition, in the sense that if $V_0 \subseteq V_1 \subseteq W$, then suspending from $V_0$ to $V_1$ and then to $W$ gives the same map (identically, not just up to homotopy) as suspending from $V_0$ to $W$.

If $V = 0$, then $S(V)$ is the empty geometry, with a single polytope, the empty polytope. We define the suspension map in this case to take the empty polytope to the entire sphere $S(W)$, inducing a map
\[ \xymatrix{ \Sph \simeq K(\Pol{S(0)}{1}) \ar[r]^-\Sigma & K(\Pol{S(W)}{1}). } \]

All together, the suspension maps make the spectra $K(\Pol{S(V)}{1})$ into a diagram indexed by the poset of subspaces $0 \subseteq V \subseteq W$.

\begin{df}\label{reduced_spherical_df}
Reduced spherical $K$-theory $\ti K(\Pol{S(W)}{1})$ is defined to be the total homotopy cofiber of this diagram in the sense of \autoref{tcofib}. So, there is a cofiber sequence
\begin{equation}\label{reduced_spherical}
\xymatrix{
	\underset{0 \subseteq V \subsetneq W}\hocolim\, K(\Pol{S(V)}{1}) \ar[r] & K(\Pol{S(W)}{1}) \ar[r] & \ti K(\Pol{S(W)}{1}).
}
\end{equation}
We similarly define $\ti K(\Pol{S(W)}{G})$ for any subgroup $G \leq O(W)$ as the homotopy orbit spectrum of $\ti K(\Pol{S(W)}{1})$. So, there is a cofiber sequence
\begin{equation*}
\xymatrix{
	\left(\underset{0 \subseteq V \subsetneq W}\hocolim\, K(\Pol{S(V)}{1})\right)_{hG} \ar[r] & K(\Pol{S(W)}{G}) \ar[r] & \ti K(\Pol{S(W)}{G}).
}
\end{equation*}
\end{df}

\begin{rmk}
	When $S(W) = S^n$ and $G = O(n+1)$, this definition may be compared to the more standard definition as the cofiber of the suspension map
	\[ \xymatrix{ K(\Pol{S^{n-1}}{O(n)}) \ar[r]^-\Sigma & K(\Pol{S^{n}}{O(n+1)}). } \]
	The two definitions agree on $K_0$, where most previous work has been done, but differ on the higher $K$-groups. We consider \autoref{reduced_spherical_df} to be the more natural of the two definitions, in light of \autoref{intro_reduced}.
\end{rmk}

\begin{rmk}
	It is essential that the colimit in \autoref{reduced_spherical_df} contains the $\Sph$ term at $V = 0$. Though it does not change the definition of $\ti K(\Pol{S(W)}{1})$ at $\pi_0$, it does affect the higher homotopy groups, and without this change \autoref{intro_reduced} would not hold.
\end{rmk}

\subsection{Suspension on the polytope module}

On $\pi_0$, the suspension map $P \mapsto P * S(V^\perp)$ induces a suspension homomorphism
\begin{equation}\label{suspension_pt}
	\xymatrix{ \Pt(V) \ar[r]^-\Sigma & \Pt(W). }
\end{equation}
\begin{lem}
	The map \eqref{suspension_pt} is the inclusion of a summand.
\end{lem}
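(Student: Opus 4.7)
The plan is to construct a left inverse $r\colon\Pt(W)\to\Pt(V)$ of $\Sigma$. Since both groups are free abelian by \autoref{pt}, exhibiting any such $r$ shows that the image is a direct summand.

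The construction uses the join structure $S(W) = S(V) * S(V^\perp)$. Concatenation of point-tuples defines a chain map (with the standard sign convention for joins) $\tilde\Tpl(S(V)) \otimes \tilde\Tpl(S(V^\perp)) \to \tilde\Tpl(S(W))$, inducing a bilinear join pairing
\[ \mu\colon \Pt(V) \otimes \Pt(V^\perp) \longrightarrow \Pt(W), \qquad [P]\otimes[Q] \longmapsto [P*Q]. \]
The suspension map factors as $\Sigma([P]) = \mu([P]\otimes[S(V^\perp)])$.

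I then need two ingredients. First, a homomorphism $\epsilon\colon\Pt(V^\perp)\to\Z$ with $\epsilon([S(V^\perp)])=1$. I would construct this by fixing an orthonormal basis of $V^\perp$ and triangulating $S(V^\perp)$ into its $2^{\dim V^\perp}$ spherical orthant simplices $O_\sigma$; the relation $[S(V^\perp)]=\sum_\sigma[O_\sigma]$ in $\Pt(V^\perp)$ comes from \autoref{pt_presentation}, and $\epsilon$ is defined to pick out the coefficient of one distinguished orthant $O_0$, with well-definedness checked against the subdivision relations. Second, a homomorphism $\pi\colon\Pt(W)\to\Pt(V)\otimes\Pt(V^\perp)$ such that $(\id\otimes\epsilon)\circ\pi\circ\Sigma=\id_{\Pt(V)}$. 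The natural choice: for a polytope $Q\subseteq S(W)$, refine a geometric triangulation of $Q$ (\autoref{triangulation_exists}) to be join-compatible with the orthogonal decomposition $W=V\oplus V^\perp$ and the orthant decomposition of $S(V^\perp)$, then read off the corresponding tensor-product generators. Two refinements admit a common refinement (\autoref{triangulation_common_refinement}), from which well-definedness on $\Pt(W)$ follows. The retraction is then $r = (\id\otimes\epsilon)\circ\pi$, and $r\circ\Sigma = \id$ follows by direct computation: applying $\pi$ to $\Sigma P = P*S(V^\perp)$ via the orthant triangulation yields $\sum_\sigma[P]\otimes[O_\sigma]$, and $(\id\otimes\epsilon)$ extracts $[P]$.

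The main obstacle will be the construction of $\pi$ and verifying it descends to a homomorphism on $\Pt(W)$: arbitrary simplices in $S(W)$ need not decompose as joins of simplices in $S(V)$ and $S(V^\perp)$, so the join-compatible triangulation requires care. The strategy is to triangulate $Q$ so that both $S(V)\cap Q$ and $S(V^\perp)\cap Q$ appear as subcomplexes and then apply an iterated star-cone refinement using the orthogonal projections onto $V$ and $V^\perp$; alternatively, one may construct $\pi$ only on the sub-presentation generated by polytopes admitting such triangulations, which suffices because the image of $\Sigma$ already consists of such polytopes, and then extend to other generators arbitrarily without affecting the identity $r\circ\Sigma=\id$.
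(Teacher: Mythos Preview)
Your approach has a genuine gap in the construction of $\pi\colon \Pt(W)\to\Pt(V)\otimes\Pt(V^\perp)$. A general polytope in $S(W)$ admits no triangulation whose simplices are joins of a simplex in $S(V)$ with a simplex in $S(V^\perp)$. Take $W=\R^2$, $V=\R e_1$, $V^\perp=\R e_2$: then $S(V)=\{\pm e_1\}$ and $S(V^\perp)=\{\pm e_2\}$, so the only join-simplices in $S^1$ are the four quarter-arcs between these points. An arc with an endpoint outside $\{\pm e_1,\pm e_2\}$ can never be subdivided into quarter-arcs, no matter how finely you refine. Since $\Pt(\R^2)$ is infinite-rank while $\Pt(\R)\otimes\Pt(\R)\cong\Z^4$, no such $\pi$ can exist at all.

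The fallback---define $r$ on join-compatible generators and ``extend to other generators arbitrarily''---does not rescue the argument. The subdivision relations in the presentation of $\Pt(W)$ mix the two kinds of generators: in the example above, the quarter-arc from $e_1$ to $e_2$ is join-compatible, but subdividing it at its midpoint $(e_1+e_2)/\sqrt 2$ yields two arcs that are not. Respecting that relation forces the ``arbitrary'' values to be compatible with the value you already fixed; there is no consistent global choice without already knowing the image of $\Sigma$ is a summand. Put differently, extending a homomorphism off a subgroup requires either that the subgroup be a direct summand or that the target be an injective $\Z$-module, and neither hypothesis is available here.

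The paper's argument is entirely different and bypasses all of this. It identifies $\Pt(W)$ with the group of $\Z$-valued functions on $S(W)$ that are constant on the open simplices of some geometric triangulation, modulo functions supported on lower-dimensional sets. In this description the suspension $\Sigma g$ is the function whose value along each join arc from $v\in S(V)$ to $S(V^\perp)$ is $g(v)$, so the image of $\Sigma$ consists exactly of those $f$ whose level-set closures $\overline{\textup{int}\,f^{-1}(n)}$ are joins $P_n * S(V^\perp)$; intersecting these with $S(V)$ recovers $g$. The functional model makes the image transparent and yields injectivity without any need to split the join into a tensor product. Even your auxiliary map $\epsilon$ is most cleanly produced from this viewpoint: it is ``evaluate the function at a fixed interior point of $S(V^\perp)$,'' which sends $[S(V^\perp)]\mapsto 1$ and makes well-definedness automatic.
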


\begin{proof}
We can characterize $\Pt(W)$ as functions $S(W) \to \Z$ that are constant on the interior of each simplex in some triangulation of $S(W)$, up to changing the function on measure-zero sets (or a finite union of simplices of dimension less than that of $S(W)$). Such functions can be characterized by the closures of the preimages $\overline{(\textup{int} f^{-1}(n))}$ for each $n \in \Z$.

The image of $\Pt(V)$ under the suspension map is those functions for which the preimages described above are all joins of polytopes in $S(V)$ with $S(V^\perp)$. This is clearly isomorphic to $\Pt(V)$, so the suspension map $\Pt(V) \to \Pt(W)$ is injective. In fact, it is split injective; we see this when $\dim W - \dim V = 1$ by picking a normal direction to $S(V)$ and taking for each polytope in $S(W)$ its intersection with an infinitesimal neighborhood of $S(V)$ in the chosen normal direction. We conclude that the suspension map is the inclusion of a summand.
\end{proof}

We say the polytope $Q \subseteq S(W)$ is {\bf a suspension from $V$} if it is in the image of the suspension map, and similarly a function $f \in \Pt(W)$ is a suspension from $V$ if it is in the image of the suspension map \eqref{suspension_pt}.

Clearly if we suspend from $U$ to $V$ and then from $V$ to $W$ we get the same result as if we had suspended from $U$ to $W$. As a result, if $U \subseteq V$ then any suspension from $U$ is also a suspension from $V$. The following lemma lets us go backwards.

\begin{lem}\label{intersect_suspensions}
	If the polytope $Q \subseteq S(W)$ is a suspension from $U$ and from $V$ then it is a suspension from $U \cap V$.
\end{lem}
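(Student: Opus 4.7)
The plan is to combine an induction on $\dim W$ with an alternating projections argument. Let $u = U \cap V$.

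First, if $U + V \neq W$, we can reduce to a smaller ambient space. Since $U^\perp = (U^\perp \cap (U+V)) \oplus (U+V)^\perp$ orthogonally, we factor $Q = P_U * S(U^\perp) = Q' * S((U+V)^\perp)$, where $Q' = Q \cap S(U+V)$ is a polytope in $S(U+V)$; the same holds for $V$. Hence $Q'$ is a suspension in $S(U+V)$ from both $U$ and $V$, and by induction on $\dim W$ it is a suspension from $u$ within $S(U+V)$. Composing with the outer suspension then gives $Q$ as a suspension from $u$ in $S(W)$, reducing the problem to the case $U + V = W$.

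For the remaining case, I would use the characterization from the proof of the preceding lemma: $Q$ is a suspension from $V$ if and only if $\chi_Q = \chi_{P_V} \circ \pi_V$ almost everywhere, where $\pi_V : S(W) \setminus S(V^\perp) \to S(V)$ is the radial projection. The main input is von Neumann's alternating projections theorem: the iterated orthogonal projections $(\mathrm{pr}_V \circ \mathrm{pr}_U)^n$ on $W$ converge pointwise to the orthogonal projection onto $u$. Since radial projection differs from orthogonal projection by a positive rescaling, and the orthogonal projection to either $U$ or $V$ preserves the $u$-component (so iterates stay bounded away from zero whenever $x_u \neq 0$), this passes to radial projections: $(\pi_V \circ \pi_U)^n(x) \to \pi_u(x)$ for every $x \in S(W) \setminus S(u^\perp)$.

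Iterating the two factorizations yields $\chi_Q(x) = \chi_Q((\pi_V \circ \pi_U)^n(x))$ for every $n$ and almost every $x$. Since $\chi_Q$ is locally constant on the complement of the codimension-one skeleton of a triangulation of $S(W)$ compatible with $Q$ (a measure-zero subset), for almost every $x$ the limit $\pi_u(x)$ avoids this skeleton, and $(\pi_V \circ \pi_U)^n(x)$ eventually lies in a neighborhood on which $\chi_Q$ is constant. Hence $\chi_Q(x) = \chi_Q(\pi_u(x))$ almost everywhere, i.e., $\chi_Q = \chi_P \circ \pi_u$ with $P = Q \cap S(u)$, giving $Q = P * S(u^\perp)$ up to measure zero. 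Top-dimensionality of $P$ in $S(u)$ follows from the pullback identity together with the top-dimensionality of $Q$ in $S(W)$, and almost-everywhere agreement of closed regular sets upgrades to equality of polytopes. The main obstacle will be the careful translation of von Neumann's theorem from orthogonal projections on $W$ to radial projections on the sphere, and the handling of the measure-zero edge cases when passing to the limit.
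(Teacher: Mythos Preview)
Your approach via von Neumann's alternating projections is genuinely different from the paper's. The paper gives a short, direct, set-theoretic argument with no induction, no limits, and no measure theory: it first shows $S((U\cap V)^\perp) = S(U^\perp + V^\perp) \subseteq Q$ by writing each such point as lying on a geodesic from a point of $S(U^\perp \cap V) \subseteq Q \cap S(V)$ to a point of $S(V^\perp)$, and using that $Q$ is a suspension from $V$; it then shows, by chaining two more geodesics (one using the $U$-suspension, then one using the $V$-suspension), that for $x \in S(U \cap V)$ one has $x \in Q$ if and only if $\{x\} * S((U\cap V)^\perp) \subseteq Q$. This is about fifteen lines.

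Your argument has a real gap in the case $U \cap V = 0$. After your reduction to $U + V = W$, if $u = 0$ then $S(u^\perp) = S(W)$, so the convergence $(\pi_V\circ\pi_U)^n(x) \to \pi_u(x)$ holds for \emph{no} $x$, and the ``almost every $x$'' conclusion is vacuous. (Indeed, the orthogonal iterates $(\mathrm{pr}_V\mathrm{pr}_U)^n x$ converge to $0$, so the radial iterates need not converge at all on the sphere.) The desired conclusion in this case is $Q = S(W)$, which needs a separate argument; the paper's first step establishes exactly this, uniformly, since $S((U\cap V)^\perp) = S(W)$ when $U\cap V = 0$.

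The remaining steps you flag---showing $P = Q \cap S(u)$ is a top-dimensional polytope in $S(u)$, and upgrading almost-everywhere agreement to literal equality of sets---are doable but heavier than you suggest (for the first you really need to exhibit $P$ as a finite union of simplices, not merely as a measurable set). The paper avoids both issues entirely by working pointwise with the sets rather than almost everywhere with characteristic functions. Your route would work with these patches, but it is considerably more machinery for a statement that yields to direct geodesic-chasing.
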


\begin{proof}
	First note that $w \in W$ is perpendicular to $U$ and to $V$ iff $w$ is perpendicular to the internal sum $U + V \subseteq W$. Therefore
	\[ (U + V)^\perp = U^\perp \cap V^\perp \]
	and dually $(U \cap V)^\perp = U^\perp + V^\perp$.
	
	Assume $Q$ is a suspension from $U$ and from $V$. Then it contains all of $S(U^\perp)$ and $S(V^\perp)$. Each point in $S(U^\perp + V^\perp)$ lies along a path from a point $y \in S(U^\perp \cap V)$ to a point $z \in S(V^\perp)$. Since $Q$ contains $y$ and $z$, and $Q$ is a suspension from $V$, it contains the whole path from $y$ to $z$. Therefore $Q$ contains all of $S(U^\perp + V^\perp) = S((U \cap V)^\perp)$.
	
	Now suppose $Q$ contains some $x \in S(U \cap V)$. Then since $Q$ is a suspension from $U$, it contains all paths joining $x$ to any $y \in S(U^\perp \cap V)$. Each intermediate point $w$ in such a path lies in $S(V)$. Since $Q$ is a suspension from $V$, it contains the path linking $w$ to any $z \in S(V^\perp)$. These paths together cover the join of $x$ to all of $S((U \cap V)^\perp)$, so $Q$ contains this entire join.
	
	Conversely, if $Q$ does not contain $x$, then since it is a suspension from $U$, it does not contain any of the points $w$ on the path from $x$ to $y$, save for $y$ itself. Since $Q$ is a suspension from $V$, it therefore does not contain any of the points on the path from any such $w$ to $z$, save for $z$ itself. Therefore $Q$ does not contain any of the join of $x$ to $S((U \cap V)^\perp)$, except for $S((U \cap V)^\perp)$ itself. All together, this shows that $Q$ satisfies the conditions of being a suspension from $U \cap V$.
\end{proof}

\begin{cor}\label{minimal_suspension}
	Each polytope $Q \subseteq S(W)$ is a suspension from some unique minimal space $U \subseteq W$. It is also a suspension from every subspace containing $U$.
\end{cor}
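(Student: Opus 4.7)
The plan is straightforward once we observe that the set of subspaces from which $Q$ is a suspension is nonempty and closed under pairwise intersection.

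First I would establish that $Q$ is a suspension from $W$ itself: taking $V = W$ gives $V^\perp = 0$, so $S(V^\perp) = \emptyset$ and $Q * \emptyset = Q$, so the collection $\mathcal{S}_Q$ of subspaces from which $Q$ is a suspension is nonempty. By \autoref{intersect_suspensions}, $\mathcal{S}_Q$ is closed under finite intersections.

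Next I would extract a minimal element. Let $d = \min \{ \dim U : U \in \mathcal{S}_Q \}$, which exists since dimensions lie in $\{0,1,\ldots,\dim W\}$. Pick any $U_0 \in \mathcal{S}_Q$ with $\dim U_0 = d$. If $V \in \mathcal{S}_Q$ is any other element, then $U_0 \cap V \in \mathcal{S}_Q$ and $\dim(U_0 \cap V) \leq d$, so by minimality $\dim(U_0 \cap V) = d$, forcing $U_0 \cap V = U_0$, i.e. $U_0 \subseteq V$. This simultaneously shows uniqueness of the minimal element and that it is contained in every member of $\mathcal{S}_Q$.

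Finally I would verify the monotonicity claim: if $Q$ is a suspension from $U$ and $U \subseteq V$, then $Q$ is a suspension from $V$. This is noted in the paragraph preceding \autoref{intersect_suspensions}, and is immediate from the observation that if $V \cap U^\perp$ denotes the orthogonal complement of $U$ inside $V$, then for $P \subseteq S(U)$ we have
\[ P * S(U^\perp) = \bigl( P * S(V \cap U^\perp) \bigr) * S(V^\perp), \]
exhibiting $Q = P * S(U^\perp)$ as a suspension of the polytope $P * S(V \cap U^\perp) \subseteq S(V)$ from $V$. Combining this with the previous paragraph, $U_0 \subseteq V$ implies $Q$ is a suspension from $V$, completing the corollary. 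There is no real obstacle here; the only subtle point is already captured by \autoref{intersect_suspensions}, and everything else is a short exercise with the lattice of subspaces.
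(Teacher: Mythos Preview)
Your proof is correct and follows exactly the approach the paper intends: the corollary is stated without proof, as an immediate consequence of \autoref{intersect_suspensions} together with the monotonicity observation in the preceding paragraph, and your argument simply spells out the standard lattice reasoning (nonempty, closed under intersection, finite-dimensional $\Rightarrow$ unique minimum) that the paper leaves implicit.
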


\begin{lem}\label{weak subdivision_monotonicity}
	If $\{Q_i\}$ is a weak subdivision of $P$, and every $Q_i$ is a suspension from $V$, then $P$ is a suspension from $V$.
\end{lem}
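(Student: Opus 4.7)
The plan is to produce an explicit polytope $P' \subseteq S(V)$ with $P = P' * S(V^\perp)$. By hypothesis each $Q_i = Q_i' * S(V^\perp)$ for some polytope $Q_i' \subseteq S(V)$, so I will set $P' := \bigcup_i Q_i'$ and verify that (a) this is a polytope in $S(V)$ and (b) its suspension equals $P$.

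First I would check that $P'$ is a polytope in $S(V)$. Each $Q_i'$ is, by definition, a finite union of geometric simplices in $S(V)$; moreover each such simplex must be top-dimensional in $S(V)$, as follows from the dimension count $\dim(Q_i' * S(V^\perp)) = \dim Q_i' + \dim S(V^\perp) + 1$, which equals $\dim S(W)$ since $Q_i$ is top-dimensional by assumption. The finite union $P'$ is therefore itself a finite union of top-dimensional simplices in $S(V)$, hence a polytope.

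Next I would verify the set equality $P = P' * S(V^\perp)$. This reduces to the distributivity of the join over unions in its first argument, $\bigl(\bigcup_i A_i\bigr) * B = \bigcup_i (A_i * B)$, which is immediate from the explicit definition of the join in \autoref{sec:polytopes}. Spelling it out: any $p \in S(W) \setminus S(V^\perp)$ has a well-defined normalized projection $\bar p \in S(V)$, and $p$ lies in $A * S(V^\perp)$ iff $\bar p \in A$. Consequently $p \in P = \bigcup_i Q_i$ iff $\bar p \in Q_i'$ for some $i$, iff $\bar p \in P'$, iff $p \in P' * S(V^\perp)$. The remaining points $p \in S(V^\perp)$ belong to both sides, since $S(V^\perp) \subseteq Q_i$ for every $i$.

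I do not expect any real obstacles: the statement is essentially the formal commutativity of set-theoretic union with the operation of joining with $S(V^\perp)$. In particular, the full force of the weak subdivision hypothesis is not needed; only the covering equality $P = \bigcup_i Q_i$ enters the argument.
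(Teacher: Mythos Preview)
Your proof is correct and follows essentially the same approach as the paper's one-line argument, which simply observes that the union of $Q_i = P_i * S(V^\perp)$ equals $(\cup_i P_i) * S(V^\perp)$. Your version is more detailed, explicitly checking that $P'$ is a polytope and correctly noting that only the covering equality $P = \bigcup_i Q_i$ is used.
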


\begin{proof}
	The union of $Q_i = P_i * S(V^\perp)$ is equal to $P = (\cup_i P_i) * S(V^\perp)$.
\end{proof}

\begin{lem}\label{lem_pi0_exact_seq}
	The canonical maps form a split exact sequence of free abelian groups
\begin{equation}\label{pi0_exact_seq}
	\xymatrix{ 0 \ar[r] & \underset{0 \subseteq V \subsetneq W}\colim \Pt(V) \ar[r] & \Pt(W) \ar[r] & \St(W) \ar[r] & 0. }
\end{equation}
\end{lem}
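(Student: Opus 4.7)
The plan attacks the three non-trivial positions separately and then splits formally. Write $A_V := \Sigma(\Pt(V)) \subseteq \Pt(W)$ for the image of the suspension map from a proper subspace $V \subsetneq W$; the third map in the sequence is then the canonical surjection $\Pt(W) \to \Pt(W)/\sum_{V \subsetneq W} A_V$.

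I would first identify the kernel of $\Pt(W) \to \St(W)$ as $\sum_{V \subsetneq W} A_V$. This is essentially the content of \cite[Thm 3.13]{dupont_book}: starting from \autoref{pt_presentation}, $\St(W)$ is obtained from $\Pt(W)$ by further imposing $[P * S(V^\perp)] = 0$ for every polytope $P \subseteq S(V)$ and every $V \subsetneq W$, which is exactly the image of $\colim_{V \subsetneq W} \Pt(V) \to \Pt(W)$.

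For injectivity of the colimit map, the key input is the group-level identity
\[ A_U \cap A_V = A_{U \cap V} \qquad \text{for all } U, V \subsetneq W. \]
I would verify this by representing elements of $\Pt(W)$ as step-functions $f\colon S(W) \to \Z$ up to null sets; in this model $f \in A_V$ iff $f$ is constant along every geodesic arc joining $S(V)$ to $S(V^\perp)$, and the argument of \autoref{intersect_suspensions} transfers directly to these pointwise conditions. Since each suspension $\Pt(V) \to A_V$ is already known to be an isomorphism (inclusion of a summand), the source diagram $V \mapsto \Pt(V)$ is canonically isomorphic to the diagram $V \mapsto A_V$ of subgroups of $\Pt(W)$ under inclusion. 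The intersection identity then guarantees that the abstract colimit of this subgroup diagram coincides with $\sum_V A_V$: any overlap $x \in A_U \cap A_V$ is witnessed by the common descendant $A_{U \cap V}$ already present in the diagram. With exactness verified, the splitting is automatic since $\St(W)$ is free abelian by \autoref{pt}.

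The principal obstacle is this injectivity step, and in particular lifting \autoref{intersect_suspensions} from individual polytopes to all of $\Pt(W)$. An element of $\Pt(W)$ is only defined up to weak subdivision, and a representation exhibiting the $U$-suspension structure may not simultaneously exhibit the $V$-suspension structure; the functional description sidesteps this by making the suspension condition intrinsic to $f$ rather than to a chosen decomposition. A secondary subtlety is that the indexing poset $\{V : V \subsetneq W\}$ is not filtered (two proper subspaces can jointly span $W$), so the reduction from the abstract colimit to the generated subgroup cannot push everything to a single upper bound and relies essentially on the intersection identity to detect all coincidences via meets.
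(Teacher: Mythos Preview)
Your proposal takes essentially the same approach as the paper: derive the intersection identity $A_U \cap A_V = A_{U\cap V}$ from \autoref{intersect_suspensions}, use it to argue that the colimit injects into $\Pt(W)$, and invoke \cite[Thm~3.13]{dupont_book} for the cokernel, with the splitting coming from freeness of $\St(W)$. The paper's proof is terser on precisely the two points you flag---it does not spell out the passage from polytopes to general elements via the functional model, nor does it discuss the non-filteredness of the indexing poset---but the logical skeleton is identical.
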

If we chose splittings for these exact sequences for every $V \subseteq W$ we would get an isomorphism $\Pt(W) \cong \bigoplus_{0 \subseteq V \subseteq W} \St(V)$, though we will not use this.

\begin{proof}
	By \autoref{intersect_suspensions}, the family of subgroups $\Pt(V) \subseteq \Pt(W)$ is closed under intersection inside $\Pt(W)$:
\[ \Pt(V) \cap \Pt(V') = \Pt(V \cap V'). \]
The map from the colimit
\[ \xymatrix{ \underset{0 \subseteq V \subsetneq W}\colim \Pt(V) \ar[r] & \Pt(W) } \]
is injective because any two elements of $\Pt(V)$ and $\Pt(V')$ that become the same in $\Pt(W)$ are related in the colimit through $\Pt(V \cap V')$. The cokernel is the Steinberg module by \cite[Thm 3.13]{dupont_book}, and this gives the desired exact sequence.
\end{proof}

\subsection{Suspension on $K$-theory}

The suspension map \eqref{suspension_on_category} sends polytopes to polytopes and clearly preserves inclusions. It does not send simplices to simplices, so to induce a functor $\mathcal D(V) \to \mathcal D(W)$, we have to expand the definition of the categories to allow suspensions of simplices.

\begin{df}
	The category $\mc A(W)$ has an object for every finite tuple of almost-disjoint polytopes in $S(W)$ of the form $\{ Q_i = P_i * S(V^\perp) \}_{i \in I}$, for some subspace $V \subseteq W$, and simplices $P_i \subseteq S(V)$. As before, each map subdivides the polytopes $Q_i$ and adds polytopes that are almost-disjoint from the $Q_i$.
	
	Note that the choice of $V$ is necessarily the minimal subspace for $Q_i$ from \autoref{minimal_suspension}, because the geometric simplex $P_i$ cannot be a suspension from a lower-dimensional space. By \autoref{weak subdivision_monotonicity}, morphisms in $\mc A(W)$ can enlarge the subspace $V$, but never shrink it.
	
	The subcategory $\mc A_\Sigma(W) \subseteq \mc A(W)$ consists of those tuples for which $V$ is a proper subspace of $W$, and $\mc D(W) \subseteq \mc A(W)$ consists of those tuples with $V = W$. The category $\mc D(W)$ is therefore just the tuples of simplices in $S(W)$, as in \autoref{sec:main_diagram}.
\end{df}

Note that the objects of $\mc A$ are the disjoint union of the objects in $\mc A_\Sigma$ and $\mc D$. As a consequence of \autoref{weak subdivision_monotonicity}, there are maps from objects in $\mc A_\Sigma$ to objects in $\mc D$, but not the other way around. Furthermore if $W \neq 0$ then $\mc D$ is cofinal in $\mc A$, so this enlargement will not affect the homotopy type of the homotopy colimits we defined previously.

As in the $\pi_0$ analysis, the suspension functor \eqref{suspension_on_category} is injective and induces an injective map
\[ \xymatrix{ \mc A(V) \ar[r]^-\Sigma & \mc A(W). } \]
We let $\mc A_{V}(W) \subseteq \mc A_\Sigma(W)$ refer to its image in $\mc A(W)$. Note that this image consists of polytopes that are suspensions of simplices from subspaces $U \subseteq V$.
\begin{lem}\label{union_hocolim}
	Any homotopy colimit over $\mc A_{\Sigma}(W)$ is the union, and also the homotopy colimit, of the corresponding homotopy colimits over $\mc A_{V}(W)$:
	\begin{align*}
		\underset{\mc A_{\Sigma}(W)}\hocolim\, F &= \bigcup_{0 \subseteq V \subsetneq W} \underset{\mc A_V(W)}\hocolim\, F \\[0.5em]
		&\cong \underset{0 \subseteq V \subsetneq W}\colim\, \underset{\mc A_V(W)}\hocolim\, F \\[0.5em]
		&\simeq \underset{0 \subseteq V \subsetneq W}\hocolim\, \underset{\mc A_V(W)}\hocolim\, F
	\end{align*}
\end{lem}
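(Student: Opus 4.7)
The plan is to apply \autoref{subcomplex_hocolim} after identifying the subdiagrams $\{\mc A_V(W)\}$ as a poset of subcomplexes of $\underset{\mc A_\Sigma(W)}\hocolim\, F$ indexed by $0 \subseteq V \subsetneq W$, and then to derive the three identifications in order.

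First, I would verify the closure properties. An inclusion $V_1 \subseteq V_2$ of proper subspaces gives a subcategory inclusion $\mc A_{V_1}(W) \subseteq \mc A_{V_2}(W)$, inducing an inclusion of subcomplexes of $\underset{\mc A_\Sigma(W)}\hocolim\, F$. Since every polytope in $\mc A(W)$ has a unique minimal subspace by \autoref{minimal_suspension}, an object belongs to $\mc A_{V_1}(W) \cap \mc A_{V_2}(W)$ exactly when each of its constituent polytopes has minimal subspace inside $V_1 \cap V_2$, giving
\[ \mc A_{V_1}(W) \cap \mc A_{V_2}(W) = \mc A_{V_1 \cap V_2}(W). \]
Thus the system of subcomplexes is closed under intersections. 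For the first equality of the lemma, I use the skeletal description of the homotopy colimit (\autoref{skeletal}): every cell is indexed by a strict chain $\alpha_0 < \cdots < \alpha_k$ in $\mc A_\Sigma(W)$. The minimal subspaces $V_j$ of $\alpha_j$ form an increasing chain, since morphisms in $\mc A(W)$ never shrink the minimal subspace, and each $V_j \subsetneq W$ because $\alpha_j \in \mc A_\Sigma(W)$. So the whole chain lies in $\mc A_{V_k}(W)$ for the proper subspace $V_k$, placing the cell inside $\underset{\mc A_{V_k}(W)}\hocolim\, F$ and hence inside the union.

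For the middle isomorphism, I invoke the standard fact that a colimit of a poset of subspaces along inclusion maps is computed by their set-theoretic union with the colimit topology; since the subcomplexes are CW subcomplexes of the Bousfield-Kan presentation, this gives a homeomorphism. For the final weak equivalence, I apply \autoref{subcomplex_hocolim} to the poset $\bI = \{0 \subseteq V \subseteq W\}$, taking $V = W$ as the terminal element corresponding to all of $\underset{\mc A_\Sigma(W)}\hocolim\, F$. Closure under intersections extends trivially to the terminal, and the union of the proper subcomplexes covers the whole space by the previous step, so the hypotheses of \autoref{subcomplex_hocolim} hold and the equivalence follows.

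There is no serious obstacle; the only substantive point is the intersection identity $\mc A_{V_1}(W) \cap \mc A_{V_2}(W) = \mc A_{V_1 \cap V_2}(W)$, which relies crucially on \autoref{intersect_suspensions} through \autoref{minimal_suspension}. The argument applies to based homotopy colimits without change, since the based version is the unbased Bousfield-Kan construction with its basepoints collapsed, and this collapse respects all the subcomplex inclusions involved.
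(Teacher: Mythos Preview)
Your proposal is correct and follows essentially the same approach as the paper: both arguments use the monotonicity of the minimal subspace along morphisms to show every chain in $\mc A_\Sigma(W)$ lies in some $\mc A_{V}(W)$, and both use the intersection identity $\mc A_{V_1}(W)\cap\mc A_{V_2}(W)=\mc A_{V_1\cap V_2}(W)$ (via \autoref{intersect_suspensions}/\autoref{minimal_suspension}) to establish closure under intersections. Your explicit invocation of \autoref{subcomplex_hocolim} for the final equivalence is a slightly more packaged version of what the paper leaves implicit.
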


\begin{proof}
	For the equality in the first line, it suffices to show that any finite string of composable arrows in $\mc A_{\Sigma}(W)$ lies in one of the subcategories $\mc A_{V}(W)$. By \autoref{weak subdivision_monotonicity}, the minimal subspace $V$ increases along any such string. So if the final object of the string is in $\mc A_{\Sigma}(W)$, its minimal subspace $V$ is properly contained in $W$, and then every object of the string is a suspension from a subspace of $V$. Therefore the string is contained in the subcategory $\mc A_{V}(W)$.
	
	For the isomorphism and equivalence in the remaining two lines, it suffices to show that the subcomplexes are closed under intersection. By \autoref{intersect_suspensions}, any finite string of composable arrows lying in both $\mc A_{U}(W)$ and $\mc A_{V}(W)$ must also lie in $\mc A_{U \cap V}(W)$. This is enough to show that the intersection of the two homotopy colimits over $\mc A_{U}(W)$ and $\mc A_{V}(W)$ gives the homotopy colimit over $\mc A_{U \cap V}(W)$.
\end{proof}

\begin{lem}\label{suspension_main_diagram_1}
	The following diagram commutes, where the vertical maps are defined as in \eqref{main_diagram}, and the horizontal maps arise from the functor $\Sigma$.
\[ \xymatrix @C=4em{
	K(\Pol{S(V)}{1}) \ar[r]^-\Sigma & K(\Pol{S(W)}{1})
	\\
	\underset{\{ P_i \} \in \mc A(V)}\hocolim\, K(\mc C_{\{P_i\}}) \ar[u]_-\sim \ar[d]^-\sim \ar[r]^-\Sigma &
	\underset{\{ Q_i \} \in \mc A(W)}\hocolim\, K(\mc C_{\{Q_i\}}) \ar[u]_-\sim \ar[d]^-\sim
	\\
	\underset{\{ P_i \} \in \mc A(V)}\hocolim\, \prod_i K(\Fin) \ar[r]^-\Sigma &
	\underset{\{ Q_i \} \in \mc A(W)}\hocolim\, \prod_i K(\Fin)
	\\
	\underset{\{ P_i \} \in \mc A(V)}\hocolim\, \prod_i \Sph \ar[u]_-\sim \ar[r]^-\Sigma &
	\underset{\{ Q_i \} \in \mc A(W)}\hocolim\, \prod_i \Sph \ar[u]_-\sim
} \]
\end{lem}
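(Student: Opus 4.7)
The plan is to verify that every map in the zig-zag \eqref{main_diagram} is natural in the underlying geometry, so that the suspension functor $\Sigma$ induces a strictly commutative ladder between the zig-zag for $V$ and the zig-zag for $W$. Two preliminary observations organize the whole argument. First, the suspension functor $P \mapsto P * S(V^\perp)$ extends to a functor $\Sigma \colon \mc A(V) \to \mc A(W)$ on indexing posets, sending $\{P_i\}_{i \in I}$ to $\{P_i * S(V^\perp)\}_{i \in I}$; this is well-defined precisely because $\mc A(W)$ was enlarged to allow suspensions of simplices as its constituent polytopes. Second, for each $\{P_i\} \in \mc A(V)$, suspension gives a functor of polytope subcategories $\mc C_{\{P_i\}} \to \mc C_{\{P_i * S(V^\perp)\}}$ that preserves weak subdivisions (since if $\{Q_j\}$ covers $Q$ almost-disjointly in $S(V)$, then $\{Q_j * S(V^\perp)\}$ covers $Q * S(V^\perp)$ almost-disjointly in $S(W)$), hence induces a map of $K$-theory spectra compatibly with the morphisms of $\mc A(V)$. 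Together these give a map of $\mc A(V)$-shaped diagrams into the restriction of the $\mc A(W)$-shaped diagram along $\Sigma$, and the top square commutes because the filtered colimit equivalence $\underset{\{P_i\}}\hocolim\, K(\mc C_{\{P_i\}}) \to K(\mc C)$ is natural in $\mc C$ (this also uses that $\mc D$ is cofinal in $\mc A$, so the upper vertical maps remain equivalences over the enlarged indexing category).

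For the middle square, recall that the atomic factor map $K(\mc C_{\{P_i\}}) \to \prod_i K(\Fin)$ is built from the functors $\mc W(\mc C_{\{P_i\}}) \to \Fin$ that send $\coprod_j Q_j$ to the subset $\{j : P_i \subseteq Q_j\} \subseteq J$. The analogous functor for $\{P_i * S(V^\perp)\} \in \mc A(W)$ sends $\coprod_j (Q_j * S(V^\perp))$ to $\{j : P_i * S(V^\perp) \subseteq Q_j * S(V^\perp)\}$. Because suspension is injective on polytopes and reflects inclusions, these two subsets of $J$ agree, so the middle square commutes on the nose (with the horizontal $\Sigma$ on the right being the identity on each product factor, reindexed by the bijection $I \cong I$ induced by suspension).

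The bottom square commutes because we use the same chosen Barratt-Priddy-Quillen map $\Sph \to K(\Fin)$ at every index, and the map $\Sigma$ on both sides is just the reindexing along the identity $I \cong I$ applied factorwise; there is nothing to check beyond writing out the definitions. Putting these three commutative squares on top of one another, together with the fact that the homotopy colimits on the left and right are indexed over $\mc A(V)$ and $\mc A(W)$ respectively and connected via the functor $\Sigma$, yields the lemma.

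The main conceptual point to keep straight is the bookkeeping of the indexing set $I$: for $\{P_i\}_{i\in I} \in \mc A(V)$, the suspended collection $\{P_i * S(V^\perp)\}_{i\in I}$ is indexed by the \emph{same} set $I$, so the horizontal maps on the products and wedges are obtained by applying identity bijections on indices followed by suspension on each factor. Once this is recognized, commutativity in each row reduces to a direct check that (i) suspension commutes with formal disjoint union and weak subdivision, and (ii) the ``indicator'' functor $\mc W(\mc C_{\{P_i\}}) \to \Fin$ is preserved by suspension. Both are immediate from the definition of the join, which is the only mildly technical step.
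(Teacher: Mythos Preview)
The paper states this lemma without proof, treating it as a routine verification. Your argument is correct and supplies exactly that verification: each square commutes strictly once one observes that suspension $P \mapsto P * S(V^\perp)$ induces an isomorphism of subcategories $\mc C_{\{P_i\}} \cong \mc C_{\{P_i * S(V^\perp)\}}$ intertwining the indicator functors to $\Fin$, and that the horizontal maps on the lower two rows are identity reindexings along the bijection $I \cong I$.
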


\begin{thm}
	$\ti K(\Pol{S(W)}{1})$ is non-equivariantly equivalent to a wedge of sphere spectra.
\end{thm}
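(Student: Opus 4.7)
The plan is to mimic the proof of \autoref{wedge_of_spheres}, but applied to the defining cofiber sequence \eqref{reduced_spherical} of $\ti K(\Pol{S(W)}{1})$. First, I would assemble the commuting diagrams of \autoref{suspension_main_diagram_1}, one for each inclusion $V \subseteq W$, into a commuting diagram of zig-zags indexed over $0 \subseteq V \subsetneq W$. Taking the homotopy colimit over $V$ and invoking \autoref{union_hocolim} (together with cofinality of $\mc D(W)$ in $\mc A(W)$), the cofiber sequence \eqref{reduced_spherical} becomes equivalent to
\[ \underset{\{P_i\} \in \mc A_\Sigma(W)}\hocolim\, \prod_i \Sph \ \longrightarrow\ \underset{\{P_i\} \in \mc A(W)}\hocolim\, \prod_i \Sph \ \longrightarrow\ \ti K(\Pol{S(W)}{1}), \]
where the first arrow is induced by the inclusion of subdiagrams, and every transition map in either homotopy colimit is a sum of identity maps of $\Sph$ (a diagonal or a zero map).

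Next, I would compute homotopy groups, following the argument used for \autoref{wedge_of_spheres}. Because the transition maps are sums of identity maps of $\Sph$, the map induced on $\pi_k$ coincides with the map on $\pi_0$ tensored with $\pi_k(\Sph)$. Concretely, the left and middle terms have homotopy groups
\[ \pi_k(\Sph) \otimes \underset{0 \subseteq V \subsetneq W}\colim\, \Pt(V) \quad \text{and} \quad \pi_k(\Sph) \otimes \Pt(W), \]
and the map between them is the suspension inclusion of \autoref{lem_pi0_exact_seq} tensored with $\pi_k(\Sph)$. Since $\St(W)$ is free abelian, hence flat over $\Z$, tensoring by $\pi_k(\Sph)$ preserves the short exact sequence of \autoref{lem_pi0_exact_seq}. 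The long exact sequence of the cofiber sequence therefore breaks into short exact pieces, giving
\[ \pi_k(\ti K(\Pol{S(W)}{1})) \cong \pi_k(\Sph) \otimes \St(W). \]
In particular, $\ti K(\Pol{S(W)}{1})$ is connective and its homotopy is a free graded module over $\pi_*(\Sph)$.

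To conclude, I would pick a basis for $\St(W)$ and lift each basis element to a map $\Sph \to \ti K(\Pol{S(W)}{1})$ representing the corresponding class on $\pi_0$. The induced map from $\bigvee_{\St(W)} \Sph$ is then a $\pi_*$-isomorphism, hence an equivalence of spectra. The most delicate step is the first one: ensuring that the vertical equivalences in \autoref{suspension_main_diagram_1} are sufficiently natural in $V$ to yield a genuine zig-zag of equivalences of \emph{cofiber sequences} of spectra, not merely of their individual terms. This should follow from the construction of the equivalences in \eqref{main_diagram}, since each step is defined via assembler-theoretic or Pontryagin-Thom maps that commute with the suspension functor $\Sigma$ on polytopes, but verifying this compatibility in detail is the main work.
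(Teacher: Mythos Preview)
Your proposal is correct and follows essentially the same route as the paper. Both arguments reduce the cofiber sequence \eqref{reduced_spherical} to the inclusion
\[
\underset{\mc A_\Sigma(W)}\hocolim\,\textstyle\prod_i \Sph \ \longrightarrow\ \underset{\mc A(W)}\hocolim\,\textstyle\prod_i \Sph
\]
via \autoref{suspension_main_diagram_1} and \autoref{union_hocolim}, then invoke the split exact sequence of \autoref{lem_pi0_exact_seq} on $\pi_0$ together with the \autoref{wedge_of_spheres} argument. Your long-exact-sequence plus flatness-of-$\St(W)$ phrasing is just an explicit unpacking of the paper's sentence ``choosing bases that make it the inclusion of a summand, we \ldots\ conclude that the map is equivalent to an inclusion of wedges of sphere spectra.''

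One small remark on the point you flagged as delicate: the naturality of the vertical equivalences in $V$ is not where the work lies. The paper sidesteps this entirely by observing that, in the bottom row of \autoref{suspension_main_diagram_1}, the horizontal suspension map is at each spectrum level a literal inclusion of subcomplexes of the single target $\hocolim_{\mc A(W)}\prod_i\Sph$; the union over $V$ then \emph{is} the homotopy colimit over $\mc A_\Sigma(W)$ by \autoref{union_hocolim}, with no zig-zag assembly required. What is genuinely brief in both your write-up and the paper's is the computation of $\pi_k$ of the left-hand term: the category $\mc A_\Sigma(W)$ is not filtered (two objects suspended from $V_1,V_2$ with $V_1+V_2=W$ admit no common refinement in $\mc A_\Sigma$), so the filtered-colimit step of \autoref{wedge_of_spheres} does not apply verbatim there. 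The paper absorbs this into ``apply the argument from \autoref{wedge_of_spheres},'' and your proposal does the same; if you want to be fully rigorous you should say a word about why the identification $\pi_k(A)\cong \pi_k(\Sph)\otimes\colim_V\Pt(V)$ holds despite the lack of filteredness.
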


\begin{proof}
	The last row of the diagram from \autoref{suspension_main_diagram_1} is at each spectrum level the inclusion of a subcomplex. By \autoref{union_hocolim}, the union of these subcomplexes over $0 \subseteq V \subsetneq W$ gives the homotopy colimit over the category $\mc A_{\Sigma}(W)$. We conclude that in the homotopy cofiber sequence of spectra \eqref{reduced_spherical} that defines $\ti K(\Pol{S(W)}{1})$, the first two terms can be written as
	\[ \xymatrix @C=4em{
		\underset{0 \subseteq V \subsetneq W}\colim\, \underset{\{ Q_i \} \in \mc A_{V}(W)}\hocolim\, \prod_i \Sph
		= \underset{\{ Q_i \} \in \mc A_{\Sigma}(W)}\hocolim\, \prod_i \Sph \ar[r] &
		\underset{\{ Q_i \} \in \mc A(W)}\hocolim\, \prod_i \Sph.
	} \]
	On $\pi_0$, this matches the first two terms of \eqref{pi0_exact_seq}, so it is an inclusion of free abelian groups. Choosing bases that make it the inclusion of a summand, we apply the argument from \autoref{wedge_of_spheres} and conclude that the map is equivalent to an inclusion of wedges of sphere spectra. Therefore its cofiber $\ti K(\Pol{S(W)}{1})$ is also a wedge of sphere spectra.
\end{proof}

This is the Solomon-Tits theorem for reduced spherical $K$-theory. The argument in \autoref{sec:homology} therefore applies to this variant of $K$-theory as well:
\begin{cor}\label{reduced_spherical_homology}
	Reduced spherical $K$-theory is rationally isomorphic to group homology,
\begin{align*}
	\ti K_i(\Pol{S(W)}{G}) \otimes \Q &\cong H_i(G; \St(W)) \otimes \Q.
\end{align*}
\end{cor}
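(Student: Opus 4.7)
The plan is to run the argument of Section \ref{sec:homology} verbatim, substituting the reduced spherical Solomon-Tits theorem (proved immediately above) for the unreduced one (\autoref{wedge_of_spheres}). Since this amounts to assembling pieces that are already in place, the proof is short.

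First, I would observe that the Solomon-Tits theorem for reduced spherical $K$-theory just established gives a non-equivariant equivalence $\ti K(\Pol{S(W)}{1}) \simeq \bigvee \Sph$, where the wedge is indexed by any basis of $\pi_0 \ti K(\Pol{S(W)}{1})$. By the cofiber sequence defining $\ti K$ together with the split exact sequence \eqref{pi0_exact_seq}, this $\pi_0$ is exactly the Steinberg module $\St(W)$. Smashing with $H\Q$ therefore produces an Eilenberg-Maclane spectrum
\[ \ti K(\Pol{S(W)}{1}) \sma H\Q \simeq H(\St(W) \otimes \Q). \]

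Next, I would upgrade this to a Borel $G$-equivalence. The cofiber sequence \eqref{reduced_spherical} is a sequence of Borel $O(W)$-spectra, hence of Borel $G$-spectra by restriction, so $\ti K(\Pol{S(W)}{1})$ carries a canonical $G$-action. Smashing a Borel $G$-spectrum with $H\Q$ preserves the $G$-action, and since the result is rational with homotopy concentrated in degree zero, it is determined up to Borel equivalence by its lone homotopy group $\St(W) \otimes \Q$ as a $\Z[G]$-module. Using the definition $\ti K(\Pol{S(W)}{G}) := \ti K(\Pol{S(W)}{1})_{hG}$ and the fact that smashing with $H\Q$ commutes with homotopy orbits,
\[ \ti K(\Pol{S(W)}{G}) \sma H\Q \simeq \big(H(\St(W) \otimes \Q)\big)_{hG}. \]
Taking homotopy groups and applying \autoref{hg_is_group_homology} then yields
\[ \ti K_i(\Pol{S(W)}{G}) \otimes \Q \cong H_i(G; \St(W) \otimes \Q) \cong H_i(G;\St(W)) \otimes \Q, \]
which is the desired isomorphism.

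The only nontrivial point is identifying the $G$-action on $\pi_0 \ti K(\Pol{S(W)}{1}) = \St(W)$ with the standard action (with the orientation twist of \autoref{twist}) used to define $H_*(G;\St(W))$. This compatibility follows because the equivalences in \autoref{suspension_main_diagram_1} were constructed from the main diagram \eqref{main_diagram}, whose $G$-equivariance was checked in \autoref{make_equivariant}; hence the cofiber sequence \eqref{reduced_spherical} fits into a $G$-equivariant cofiber sequence whose $\pi_0$ is precisely \eqref{pi0_exact_seq}. I expect this equivariance bookkeeping to be the only mildly fiddly step; the rest is a direct import of the argument already given for \autoref{intro_homology}.
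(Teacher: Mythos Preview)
Your proposal is correct and matches the paper's approach exactly: the paper's proof consists of the single sentence ``The argument in \autoref{sec:homology} therefore applies to this variant of $K$-theory as well,'' and you have simply spelled out that argument, including the identification $\pi_0 \ti K(\Pol{S(W)}{1}) \cong \St(W)$ via \eqref{pi0_exact_seq}. The equivariance bookkeeping you flag is indeed implicit in the paper rather than written out, but your reasoning for it is sound.
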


\subsection{Suspension on diagrams of quotients}

The next step is to extend \autoref{suspension_main_diagram_1} all the way to the homotopy colimit of the terms $\Sigma(Q_i/\partial Q_i)$ over $\mc A(W)$. We need to modify this diagram to account for the initial object of $\mc A(W)$.

\begin{df}\label{initial_term}
	We define the diagram $\bigvee_i \Sigma(Q_i / \partial Q_i)$ over $\mc A(W)$ in the obvious way, with one exception. At the initial object, the 1-tuple on
	\[ Q = \emptyset * S(0^\perp) = S(W), \]
	we define the corresponding space in the diagram to be $S^W$, instead of
	\[ \Sigma (Q/\partial Q) = \Sigma (S(W)_+) \cong S^W/\{0,\infty\}. \]
	Since $S^W$ maps to this quotient, this still forms a well-defined diagram.
	
	This change does not affect the homotopy colimit, unless $W = 0$, in which case it makes the homotopy colimit into $S^0$ instead of $*$. This corresponds to the desired outcome that $K(\Pol{S(0)}{1}) = K(\Pol{\emptyset}{1})$ is the sphere spectrum $\Sph$, not the zero spectrum $*$.
\end{df}

For any $V \subseteq W$, we have the corresponding diagram on $\mc A(V)$, whose value at a tuple of polytopes $\{P_i\}$ (that are suspensions from some $U \subseteq V$) is the wedge sum $\bigvee_i \Sigma(P_i / \partial P_i)$. We may apply suspension by the orthogonal complement of $V$, giving the diagram $\Sigma^{V^\perp} \bigvee_i \Sigma(P_i / \partial P_i)$.

\begin{df}\label{suspension_on_diagram}
	We define a map of diagrams
	\[ \xymatrix{
		\left\{ \Sigma^{V^\perp} \bigvee_i \Sigma(P_i / \partial P_i) \right\} \ar[r]^-{\Sigma} & \left\{ \bigvee_i \Sigma(Q_i / \partial Q_i) \right\}
	} \]
	over the functor $\Sigma\colon \mc A(V) \to \mc A(W)$ as follows. For each tuple of polytopes $\{ P_i \}$ in $S(V)$, with $Q_i = P_i * S(V^\perp)$, we take the homeomorphism
	\[ \Sigma^{V^\perp} \Sigma (P_i/\partial P_i) \cong \Sigma (Q_i/\partial Q_i) \]
	arising from the fact that both are quotients of the one-point compactification $S^W$ by the same subspace. At the initial term, where $P = S^V$ and $Q = S^W$, we take the canonical isomorphism $\Sigma^{V^\perp}S^V \cong S^W$.
\end{df}

It is straightforward to check this is a well-defined map of diagrams. (Viewed in the right way, every map is a collapse map of subspaces of $S^W$, so they all agree with each other.)

\begin{lem}\label{suspension_main_diagram_2}
	The following diagram commutes, where the vertical maps are defined as in \eqref{main_diagram}, and the horizontal maps arise from the map of diagrams in \autoref{suspension_on_diagram}.
\[ \xymatrix @C=4em{
	\Sigma^W \underset{\{ P_i \} \in \mc A(V)}\hocolim\, \prod_i \Sph \ar[d]^-\sim \ar[r]^-\Sigma &
	\Sigma^W \underset{\{ Q_i \} \in \mc A(W)}\hocolim\, \prod_i \Sph \ar[d]^-\sim
	\\
	\underset{\{ P_i \} \in \mc A(V)}\hocolim\, \prod_i \Sigma^\infty \Sigma^{V^\perp}  \Sigma (P_i/\partial P_i) \ar[r]^-\Sigma &
	\underset{\{ Q_i \} \in \mc A(W)}\hocolim\, \prod_i \Sigma^\infty \Sigma (Q_i/\partial Q_i)
	\\
	\Sigma^\infty \underset{\{ Q_i \} \in \mc A_{V}(W)}\hocolim\, \bigvee_i \Sigma (Q_i/\partial Q_i) \ar[u]_-\sim \ar[r]^-\Sigma &
	\Sigma^\infty \underset{\{ Q_i \} \in \mc A(W)}\hocolim\, \bigvee_i \Sigma (Q_i/\partial Q_i) \ar[u]_-\sim
} \]
\end{lem}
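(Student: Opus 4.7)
The plan is to verify the two squares independently; each ultimately reduces to a pointwise check on the indexing categories, since every horizontal and vertical map is induced by a map of diagrams. Essentially all of the content lives in a single geometric compatibility between Pontryagin-Thom collapses and the join construction.

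First I would establish the following geometric lemma. For a simplex $P \subseteq S(V)$ with suspension $Q = P * S(V^\perp) \subseteq S(W)$, the Pontryagin-Thom collapse $c_Q\colon S^W \to \Sigma(Q/\partial Q)$ factors, under the identification $S^W \cong S^V \sma S^{V^\perp}$, as $(c_P \sma \id_{S^{V^\perp}})$ followed by the homeomorphism $\Sigma^{V^\perp}\Sigma(P/\partial P) \cong \Sigma(Q/\partial Q)$ of \autoref{suspension_on_diagram}. The check is local and direct: the positive cone on $Q$ in $W$ from the origin equals the internal direct sum of the positive cone on $P$ in $V$ with $V^\perp$, because $Q$ is the join of $P$ with the entire unit sphere $S(V^\perp)$. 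Under the smash decomposition this cone is exactly the product of (cone on $P$) with $V^\perp$, and the boundary identifications match since $\partial Q = \partial P * S(V^\perp)$.

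Granted this, the top square commutes factor by factor: the left column applies $c_{P_i} \sma \id$ on each factor of the product, the right applies $c_{Q_i}$, and the horizontal map in the middle row uses precisely the homeomorphism $\Sigma^{V^\perp}\Sigma(P_i/\partial P_i) \cong \Sigma(Q_i/\partial Q_i)$. I also need to verify compatibility at the initial term, where by the convention of \autoref{initial_term} the diagrams take the value $S^V$ (resp.\ $S^W$) rather than a quotient of it, and both paths reduce to the canonical isomorphism $\Sigma^{V^\perp}S^V \cong S^W$.

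For the bottom square, the vertical maps pull the suspension spectrum out of the homotopy colimit and realize each wedge sum as a summand of the corresponding finite product. Both operations are natural in the indexing category and in each factor, so commutativity reduces to the naturality of the wedge-to-product inclusion along $\mc A_V(W) \hookrightarrow \mc A(W)$, which is routine. The step I expect to be the main obstacle is the bookkeeping at the initial term: the convention of \autoref{initial_term}, which alters the value from $\Sigma(S(W)_+)$ to $S^W$, has to thread correctly through each collapse and each identification so that the squares remain strictly commutative rather than commutative only up to homotopy.
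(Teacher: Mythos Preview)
Your approach is correct and matches the paper's, which simply calls this ``a straightforward comparison of Pontryagin-Thom collapse maps''; you have spelled out the geometric factorization that the paper leaves implicit. One small addition: the paper also pauses to verify that the vertical maps remain \emph{equivalences} on the new terms of the expanded category $\mc A(V)$ (not just $\mc D(V)$)---for a non-initial term $P_i * S(U^\perp)$ the polytope is still contractible so the collapse is a degree-one map of spheres, and at the initial term the collapse is the identity of $S^V$ or $S^W$---which you should mention since the $\sim$ decorations in the diagram are part of the claim.
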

This is a straightforward comparison of Pontryagin-Thom collapse maps. Since we have expanded the definition of $\mc A(V)$, we also check that on the new terms, the Pontryagin-Thom collapse is an equivalence. For most of these new terms, $P_i$ is contractible, so we still get a degree-one map of spheres, which is an equivalence. At the initial term, the collapse is the identity of $S^V$ or $S^W$, so it is clearly an equivalence.

The last row of \autoref{suspension_main_diagram_2} is again the inclusion of a subcomplex. Taking the union of these subcomplexes over $0 \subseteq V \subsetneq W$ gives the homotopy colimit over $\mc A_\Sigma(W)$, by \autoref{union_hocolim}.

\subsection{Defining a map from the quotient}

By \autoref{suspension_main_diagram_1} and \autoref{suspension_main_diagram_2}, we have an equivariant equivalence from $\Sigma^W \ti K(\Pol{S(W)}{1})$ to the suspension spectrum of the quotient
\begin{equation}\label{big_quotient}
	\left( \underset{\{ Q_i \} \in \mc A(W)}\hocolim\, \bigvee_i \Sigma (Q_i/\partial Q_i) \right) \Big/ \left( \underset{\{ Q_i \} \in \mc A_{\Sigma}(W)}\hocolim\, \bigvee_i \Sigma (Q_i/\partial Q_i) \right).
\end{equation}
To finish the proof of \autoref{intro_reduced}, it remains to define an equivariant map from this quotient into $\Sigma\ST(W)$, giving an isomorphism on the lone homology group $\St(W)$. For this purpose, we model $\ST(W)$ as the quotient
\[ \left( \underset{\emptyset \subsetneq S(U) \subseteq S(W)}\uhocolim\, {*} \right) \Big/ \left( \underset{0 \subsetneq S(U) \subsetneq S(W)}\uhocolim\, {*} \right)
= B(0 \subsetneq \bullet \subseteq W) \big/ B(0 \subsetneq \bullet \subsetneq W),
\]
where $B$ denotes classifying space of the poset of subspaces of $W$ satisfying the given conditions.

\begin{df}\label{obedient}
	A map
	\[ \xymatrix{ \underset{\{ Q_i \} \in \mc A(W)}\hocolim\, \bigcup_i Q_i \ar[r] & B(0 \subsetneq \bullet \subseteq W) } \]
	is {\bf obedient} if for each $k$-tuple of composable morphisms in $\mc A(W)$ starting at $\{ P_i * S(U^\perp) \}$ and ending at $\{ Q_j * S(V^\perp) \}$, on the corresponding piece of the homotopy colimit
	\[ \xymatrix{ \Delta^k \times \bigcup_i (P_i * S(U^\perp)), 
	} \]
	we have the following condition at each point in $\Delta^k$:
	\begin{itemize}
		\item If $V = W$ and $U \neq 0$, each face $(D * S(U^\perp)) \subseteq \partial (P_i * S(U^\perp))$ is sent into 
		\[ B( 0 \subsetneq \bullet \subseteq \spa(D) + U^\perp ) \subseteq B(0 \subsetneq \bullet \subsetneq W) \]
		and the entirety of $P_i * S(U^\perp)$ is sent into $B(0 \subsetneq \bullet \subseteq W)$. \\[0.5em]
		\item If $V \neq W$ and $U \neq 0$, each face $(D * S(U^\perp)) \subseteq \partial (P_i * S(U^\perp))$ is sent into 
		\[ B( V^\perp \subseteq \bullet \subseteq \spa(D) + U^\perp )  \subseteq B(0 \subsetneq \bullet \subsetneq W) \]
		and the entirety of $P_i * S(U^\perp)$ is sent into $B( V^\perp \subseteq \bullet \subsetneq W )$. \\[0.5em]
		\item If $U = 0$, $P_i * S(U^\perp) = S(W)$, and we allow it to go anywhere in $B(0 \subsetneq \bullet \subseteq W)$.
	\end{itemize}
\end{df}

Note that each of the subcomplexes of $B(0 \subsetneq \bullet \subseteq W)$ named in this definition is contractible, on account of having either an initial object or a terminal object. To show the space of obedient maps is contractible, the main thing to check is the following lemma.

\begin{lem}\label{di_compatible}
	These conditions are compatible, in the sense that along any face of $\Delta^k$, the condition we get from the corresponding smaller string of maps is stricter than the original condition.
\end{lem}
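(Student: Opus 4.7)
The plan is to split by the index $j$ of the face $\partial_j \Delta^k$ and check compatibility case by case.

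For $0 < j < k$, the shorter chain still has initial object $A_0$ and terminal object $A_k$, so the subspaces $U$ and $V$ governing the conditions of \autoref{obedient} are unchanged. The prescribed $B$-subspaces are identical, and compatibility is immediate.

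For $j = k$, only the terminal object changes, from $A_k$ (subspace $V$) to $A_{k-1}$ (some subspace $V' \subseteq V$); the initial object, the polytopes $P_i * S(U^\perp)$, and their faces are unchanged. The upper bound $\spa(D) + U^\perp$ on each face is the same in both conditions, while the lower bound enlarges from $V^\perp$ to $(V')^\perp \supseteq V^\perp$. In the transitional subcase where $V = W$ but $V' \neq W$, the new lower bound $(V')^\perp$ is nonzero, which already implies $\bullet \neq 0$, satisfying the old requirement $0 \subsetneq \bullet$. Hence the new condition is stricter.

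For $j = 0$, the initial object changes from $A_0$ (subspace $U$) to $A_1$ (subspace $U_1 \supseteq U$), and the attaching map is the natural inclusion $\bigcup_i (P_i * S(U^\perp)) \hookrightarrow \bigcup_\ell (Q'_\ell * S(U_1^\perp))$ of subsets of $S(W)$, where $A_1 = \{Q'_\ell * S(U_1^\perp)\}$. The subcase $U = 0$ is immediate because then the old condition permits any target in $B(0 \subsetneq \bullet \subseteq W)$. For $U \neq 0$, the lower bounds depend only on the (unchanged) $V$, so the comparison reduces to upper bounds, and it suffices to prove the following key lemma:

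\medskip

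\textbf{Key lemma.} Fix $p \in P_i * S(U^\perp)$ and let $D_{\min} \subseteq P_i$ be the minimal face with $p \in D_{\min} * S(U^\perp)$. Fix any $Q'_\ell * S(U_1^\perp)$ in the almost-disjoint cover of $P_i * S(U^\perp)$ that contains $p$, and let $D'_{\min} \subseteq Q'_\ell$ be the minimal face with $p \in D'_{\min} * S(U_1^\perp)$. Then
\[
\spa(D'_{\min}) + U_1^\perp \;\subseteq\; \spa(D_{\min}) + U^\perp.
\]

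\medskip

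I plan to prove the key lemma by orthogonal decomposition. The containment $Q'_\ell * S(U_1^\perp) \subseteq P_i * S(U^\perp)$ together with $Q'_\ell \subseteq S(U_1)$ gives $Q'_\ell \subseteq P_i * S(U^\perp \cap U_1)$, so each vertex $v$ of $Q'_\ell$ decomposes as $v = \cos\alpha \cdot q + \sin\alpha \cdot r$ with $q \in P_i$ and $r \in S(U^\perp \cap U_1)$. Minimality of $D'_{\min}$ means that the orthogonal projection of $p$ to $U_1$ lands in the relative interior of $D'_{\min}$, i.e.\ is a strictly positive convex combination of its vertices. Projecting this combination further to $U$ and applying the minimality of $D_{\min}$ (the point $p_U/|p_U|$ lies in the relative interior of $D_{\min}$ inside $P_i$) forces the $P_i$-components $q$ of all active vertices of $D'_{\min}$ to lie in $D_{\min}$; the remaining vertices lie in $S(U^\perp \cap U_1)$. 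In both subcases, each vertex $v$ of $D'_{\min}$ lies in $\spa(D_{\min}) + (U^\perp \cap U_1)$, so $\spa(D'_{\min}) \subseteq \spa(D_{\min}) + (U^\perp \cap U_1)$; adding $U_1^\perp$ and using the orthogonal decomposition $U^\perp = (U^\perp \cap U_1) \oplus U_1^\perp$ yields the lemma.

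The main obstacle is the key lemma: weak subdivisions are not required to respect face structure, so a face of $Q'_\ell * S(U_1^\perp)$ can a priori sit transversely to the faces of $P_i * S(U^\perp)$. The argument circumvents this by using the specific join structure $P_i * S(U^\perp)$ together with the minimality of both faces at $p$, which combine to pin down the $U$-components of the vertices of $D'_{\min}$.
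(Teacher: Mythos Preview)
Your proof is correct and follows the same case-split on the face index $j$ as the paper. The only substantive difference is in the $j=0$ case: rather than your explicit orthogonal decomposition, the paper observes that $\spa(D) + U^\perp = \spa(D * S(U^\perp))$ is simply the linear span of the join as a subset of $W$, so the desired inclusion $\spa(D'_{\min}) + U_1^\perp \subseteq \spa(D_{\min}) + U^\perp$ reduces at once to the set inclusion $D'_{\min} * S(U_1^\perp) \subseteq D_{\min} * S(U^\perp)$. The paper asserts this set inclusion (``each face $D * S(U^\perp)$ gets cut into smaller faces of the form $D_i * S(U_i^\perp) \subseteq D * S(U^\perp)$'') without further argument; it is the standard convexity fact that if $A \subseteq B$ are convex and $F'$ is a face of $A$ whose relative interior meets a face $F$ of $B$, then $F' \subseteq F$, applied here to the convex cones in $W$ over the spherical polytopes in question. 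Your coordinate computation essentially re-derives this extremality property in the specific situation at hand, with the advantage of being fully self-contained at the cost of some length.
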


\begin{proof}
	For the faces corresponding to a composition of two maps in the string, the conditions do not change. If we delete a morphism off the end of the string, the space $V$ may shrink. The only way this can change the above conditions is if we move from $V = W$ to $V \neq W$. Our maps gain the condition of having to land in subspaces that contain $V^\perp \neq 0$, which is stricter than landing in some nonzero subspace.
	
	If we delete a morphism off the beginning, then the polytopes $P_i * S(U^\perp)$ get subdivided, and as a result the minimal space $U$ can get larger. If this moves from $U = 0$ to $U \neq 0$, then the condition of course gets stricter, because there was no condition before. Otherwise, $U \neq 0$, and $P_i * S(U^\perp)$ gets subdivided into pieces, giving many more faces. It is clear that the new faces in the interior have a stricter condition than before. On the exterior, each face $D * S(U^\perp)$ gets cut into smaller faces of the form
	\[ D_i * S(U_i^\perp) \subseteq D * S(U^\perp). \]
	But then
	\[ \spa(D_i) + U_i^\perp = \spa(D_i * S(U_i^\perp)) \subseteq \spa(D * S(U^\perp)) = \spa(D) + U^\perp, \]
	so the conditions on each of these new faces also get stricter.
\end{proof}

\begin{lem}
	The space of obedient maps is weakly contractible.
\end{lem}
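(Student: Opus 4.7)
The plan is to mimic the double-induction strategy used for \autoref{introspective_contractible} and \autoref{introspective_hocolim}, adapted to the obedience conditions. To show weak contractibility, I will prove that any map $S^{m-1} \to (\text{obedient maps})$ extends to $D^m \to (\text{obedient maps})$, which by adjunction amounts to filling in a given obedient map on $\partial(D^m \times \Delta^k) \times \bigcup_i (P_i * S(U^\perp))$ to the interior $D^m \times \Delta^k \times \bigcup_i (P_i * S(U^\perp))$, while preserving obedience.

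The outer induction runs over the skeletal filtration of the homotopy colimit described in \autoref{skeletal}. At the inductive step, a new cell $\Delta^k \times \bigcup_i(P_i * S(U^\perp))$ is attached along a $k$-tuple of composable morphisms in $\mc A(W)$. The map is already defined on $S^{m-1} \times \Delta^k$ by hypothesis and on $D^m \times \partial\Delta^k$ from the previous stages. By \autoref{di_compatible}, the conditions on those boundary pieces are all at least as strict as the condition for the new cell, so the existing data assembles into an obedient map on all of $\partial(D^m \times \Delta^k) \times \bigcup_i (P_i * S(U^\perp))$.

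The inner induction fills in the interior $D^m \times \Delta^k \times \bigcup_i (P_i * S(U^\perp))$ one face of $\bigcup_i(P_i * S(U^\perp))$ at a time, in the style of \autoref{introspective_contractible}. After triangulating $\bigcup_i(P_i * S(U^\perp))$ compatibly with all boundary faces of the $P_i * S(U^\perp)$ (\autoref{triangulation_exists}), we add cells in order of increasing dimension. For each new cell $D'$, the obedience condition forces the image of $D^m \times \Delta^k \times D'$ to land in the intersection of all subcomplexes of $B(0 \subsetneq \bullet \subseteq W)$ prescribed by \autoref{obedient} for the various faces $D * S(U^\perp)$ containing $D'$. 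A direct inspection of the three cases in \autoref{obedient} shows each such intersection is a subcomplex of the form $B(A \subseteq \bullet \subseteq B')$ for some $A \subseteq B'$, hence has either a least or greatest element and is therefore contractible. The given boundary data lands in this subcomplex by the compatibility observation in \autoref{di_compatible} applied at the level of polytope faces, so the extension exists.

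The main potential obstacle is bookkeeping: verifying that at every face of every newly attached cell, the various obedience constraints coming from \autoref{obedient} really do restrict to a single \emph{contractible} subcomplex $B(A \subseteq \bullet \subseteq B')$, rather than to an empty or disconnected one. This reduces to checking that $V^\perp$ (where it appears) is contained in $\spa(D) + U^\perp$ for the faces $D$ in question, which follows because $V \supseteq U$ along any morphism of $\mc A(W)$, so $V^\perp \subseteq U^\perp \subseteq \spa(D) + U^\perp$. With this in hand, both inductions close, and the space of obedient maps is weakly contractible. Taking $m = 0$ also recovers the existence of an obedient map to begin with.
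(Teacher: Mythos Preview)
Your proposal is correct and follows essentially the same approach as the paper. The paper's own proof is a single sentence deferring to the argument for \autoref{introspective_hocolim} with \autoref{di_compatible} supplying the compatibility check; you have spelled out that double induction explicitly, including the verification (left implicit in the paper, though noted in the paragraph preceding the lemma) that the relevant target subcomplexes and their intersections are contractible, via the containment $V^\perp \subseteq U^\perp \subseteq \spa(D) + U^\perp$.
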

\begin{proof}
	The proof is identical to that of \autoref{apartment-like_hocolim}, using \autoref{di_compatible} for compatibility.
\end{proof}

\begin{rmk}
As before, obedient maps are not equivariant, but the conjugation $G$-action preserves obedient maps, because $G$ preserves polytopes, joins, spans, and orthogonal complements. As a result, as in \autoref{make_equivariant}, whenever we multiply the source by $EG$, we get a contractible space of equivariant maps.
\end{rmk}

The definition of ``obedient'' agrees with ``apartment-like'' in the following sense. Consider the square of maps
\begin{equation}\label{eq_i_di_compatibility}
\xymatrix{
		\underset{\{ Q_i \} \in \mc D(W)}\uhocolim\, \bigcup_i Q_i \ar[d]^-\sim \ar[r] & \underset{0 \subsetneq \bullet \subseteq W}\uhocolim\, \Tpl(\bullet) \ar[d] \\
		\underset{\{ Q_i \} \in \mc A(W)}\uhocolim\, \bigcup_i Q_i \ar[r] & \underset{0 \subsetneq \bullet \subseteq W}\uhocolim\, {*} \ar@{=}[r] & B(0 \subsetneq \bullet \subseteq W)
}
\end{equation}
where the top horizontal is any apartment-like map the sense of \autoref{apartment-like_change}, and the bottom horizontal is any obedient map in the sense of \autoref{obedient}. The left horizontal is the inclusion, and is an equivalence because $\mc D$ is cofinal in $\mc A$.
\begin{lem}\label{i_di_compatibility}
	Any square of the form \eqref{eq_i_di_compatibility} commutes up to homotopy.
\end{lem}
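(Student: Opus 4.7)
The plan is to unify both compositions under a single notion of ``compatible'' map from $\uhocolim_{\{Q_i\} \in \mc D(W)} \bigcup_i Q_i$ to $B(0 \subsetneq \bullet \subseteq W)$, and then appeal to weak contractibility of the space of such maps.

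First I would spell out what each composition looks like locally. Going top-then-right, an introspective map in the sense of \autoref{introspective_change} sends each face $D \subseteq \partial Q_i$ into the subcomplex $\uhocolim_{\emptyset \subsetneq U \subseteq \spa(D)}\Tpl(U \cap D)$, and then the vertical collapse of each $\Tpl(U)$ to a point lands inside $B(0 \subsetneq \bullet \subseteq \spa(D))$. Going left-then-bottom, every chain in $\mc D(W)$ lifts to a chain in $\mc A(W)$ with $U = W$ and $V = W$ throughout, so the first case of \autoref{obedient} reduces to exactly the statement: each face $D \subseteq \partial Q_i$ is sent into $B(0 \subsetneq \bullet \subseteq \spa(D))$ and all of $Q_i$ is sent into $B(0 \subsetneq \bullet \subseteq W)$. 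I would then declare any map satisfying this local condition on every piece $\Delta^k \times \bigcup_i Q_i$ of the source to be \emph{compatible}, and observe that by the above both compositions in the square are compatible.

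Second I would show that the space of compatible maps is weakly contractible by running the same cell-by-cell induction used for \autoref{introspective_contractible} and \autoref{introspective_hocolim}. Fixing a triangulation of $\bigcup_i Q_i$ with each $Q_i$ a subcomplex (\autoref{triangulation_exists}), for each cell $D$ of the triangulation the constraint is to land in
\[ \bigcap_{i,k} B\bigl(0 \subsetneq \bullet \subseteq \spa(D_{i,k})\bigr) = B\Bigl(0 \subsetneq \bullet \subseteq \bigcap_{i,k} \spa(D_{i,k})\Bigr), \]
where $D_{i,k}$ ranges over faces of $P_i$ containing $D$. This is a nerve of a poset with terminal object $\bigcap_{i,k}\spa(D_{i,k})$, which contains $\spa(D) \neq 0$ and so is a nonzero subspace; hence the subcomplex is contractible and the inductive extension of compatible maps over the new cell goes through as before. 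Compatibility of the constraint under face maps of the chain, shortening on either end, follows the same monotonicity argument as in \autoref{di_compatible}, since passing to a subdivision can only tighten the relevant spans.

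The main obstacle is the bookkeeping in the first step: verifying carefully that the obedience conditions really do specialize to the common local condition on chains internal to $\mc D(W)$, and that an introspective map followed by the collapse lands in $B(0 \subsetneq \bullet \subseteq \spa(D))$ on each face $D$. Once this matching is in hand, contractibility of the space of compatible maps forces the two compositions of the square to be homotopic, which is the content of the lemma.
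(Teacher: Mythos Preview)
Your argument is correct and rests on the same key observation as the paper: on chains entirely in $\mc D(W)$ (so $U = V = W$ and $U^\perp = 0$), the first obedient condition in \autoref{obedient} collapses to exactly the introspective condition of \autoref{introspective_change} after projecting each $\Tpl(U^k \cap D)$ to a point. Where you diverge is in how you conclude. The paper exploits this matching more directly: given any introspective map, its composite with the collapse is already an obedient map on the image of $\mc D(W)$ in $\mc A(W)$, and since obedient maps extend (by the usual cell-by-cell induction into contractible targets), one can \emph{choose} the obedient map on $\mc A(W)$ to be such an extension, making the square commute strictly; homotopy commutativity for an arbitrary obedient map then follows from contractibility of the space of obedient maps. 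Your route instead introduces an auxiliary notion of ``compatible'' map, verifies both compositions are compatible, and re-runs the contractibility argument for this new space. This is perfectly valid and handles the ``any'' in the statement head-on, but it duplicates work: the contractibility you prove for compatible maps is essentially the contractibility already established for obedient maps, restricted to $\mc D(W)$. The paper's version is shorter because it leverages the existing contractibility result rather than reproving a variant of it.
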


\begin{proof}
	We only need to consider the obedient map on the terms where $U = V = W$, and therefore only first condition in \autoref{obedient} applies. This agrees the definition of apartment-like from \autoref{apartment-like_change}, except with the complexes $\Tpl(U^k \cap D)$ collapsed to a point. Therefore the obedient map can be chosen so that the square commutes strictly.
\end{proof}

Suppose we choose obedient and apartment-like maps as in \eqref{eq_i_di_compatibility}. Taking quotients and suspending gives a commuting square
\begin{equation}\label{eq_i_di_compatibility_2}
\xymatrix{
		\underset{\{ Q_i \} \in \mc D(W)}\hocolim\, \bigvee_i \Sigma(Q_i/\partial Q_i) \ar[d]^-\sim \ar[r]^-\sim & \Sigma\left(\underset{0 \subsetneq \bullet \subseteq W}\uhocolim\, \Tpl(\bullet) \big/ \underset{0 \subsetneq \bullet \subsetneq W}\uhocolim\, \Tpl(\bullet)\right) \ar[d] \\
		\underset{\{ Q_i \} \in \mc A(W)}\hocolim\, \bigvee_i \Sigma(Q_i/\partial Q_i) \ar[r] & \Sigma\left( B(0 \subsetneq \bullet \subseteq W) \big/ B(0 \subsetneq \bullet \subsetneq W)\right).
}
\end{equation}
Actually, we have to say a little more in order to define the bottom horizontal map on the initial term $Q = S(W)$ of $\mc A(W)$. We take as our definition that the bottom horizontal map of \eqref{eq_i_di_compatibility_2} quotients this term down to $\Sigma (Q/\partial Q)$, then applies the suspension of the obedient map. There is no such issue for the left-hand vertical map because the initial term lies outside the image of this map.

The square \eqref{eq_i_di_compatibility_2} gives on homology
\begin{equation}\label{eq_i_di_compatibility_3}
\xymatrix{
		\Pt(W) \ar@{=}[d] \ar@{=}[r] & \Pt(W) \ar[d] \\
		\Pt(W) \ar[r] & \St(W).
}
\end{equation}
The right-hand vertical map of \eqref{eq_i_di_compatibility_3} is the standard surjection $\Pt(W) \to \St(W)$, but the square commutes, so the bottom horizontal map also induces the standard surjection $\Pt(W) \to \St(W)$.

\begin{rmk}\label{dirty_trick}
To finish the proof, it suffices to take the bottom horizontal map of \eqref{eq_i_di_compatibility_2} and deform it to zero on the subcomplex that is the hocolim over $\mc A_\Sigma$. This creates the desired map from the quotient \eqref{big_quotient} into $\Sigma\ST(W)$. The induced map $\St(W) \to \St(W)$ lies underneath the identity of $\Pt(W)$, and therefore it is the identity map as well, so we get our equivalence to $\Sigma\ST(W)$ as desired.
\end{rmk}

\begin{prop}\label{big_nullhomotopy}
	Any map
	\[ \xymatrix{
		\underset{\{ Q_i \} \in \mc A(W)}\hocolim\, \bigvee_i \Sigma(Q_i/\partial Q_i) \ar[r] & \Sigma\left( B(0 \subsetneq \bullet \subseteq W) \big/ B(0 \subsetneq \bullet \subsetneq W)\right)
	} \]
	induced by a obedient map, is nullhomotopic when restricted to the homotopy colimit over $\mc A_\Sigma(W)$.
\end{prop}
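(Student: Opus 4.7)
The plan is to pass to stable homotopy, use the Solomon-Tits structure to reduce the claim to a $\pi_0$ computation, and then invoke \autoref{lem_pi0_exact_seq}. First, by \autoref{union_hocolim}, I decompose the source
\[ \underset{\{Q_i\} \in \mc A_\Sigma(W)}\hocolim\, \bigvee_i \Sigma(Q_i/\partial Q_i) \]
as the union of its subcomplexes $\underset{\{Q_i\} \in \mc A_V(W)}\hocolim\, \bigvee_i \Sigma(Q_i/\partial Q_i)$ indexed by $V \subsetneq W$. Applying \autoref{suspension_main_diagram_1} and \autoref{suspension_main_diagram_2} to each such $V$ (playing the role that $W$ played in the full spherical main theorem), each subcomplex becomes stably identified with $\Sigma^{V^\perp+1}\PT(V)$, and under these identifications the restricted map becomes, up to stable equivalence, the natural collapse
\[ \underset{V \subsetneq W}\hocolim\, \Sigma^{V^\perp+1}\PT(V) \longrightarrow \Sigma\ST(W) \]
induced by $\PT(V) \to \PT(W) \to \ST(W)$.

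Next I argue that both source and target stably look like wedges of $\dim W$-sphere spectra. For the target this is the Solomon-Tits theorem applied to $\ST(W)$. For the full hocolim $\underset{\mc A(W)}\hocolim\, \bigvee_i \Sigma(Q_i/\partial Q_i) \simeq \Sigma\PT(W)$, it is the Solomon-Tits theorem for $\PT(W)$. The cofiber sequence \eqref{reduced_spherical} identifies the cofiber of the inclusion $\hocolim_{\mc A_\Sigma} \hookrightarrow \hocolim_{\mc A}$ (after suitable suspension) with the reduced spherical $K$-theory spectrum, which is a wedge of sphere spectra by the reduced Solomon-Tits theorem established earlier in this section. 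Because $\St(W)$ is free abelian, the $\pi_0$-sequence
\[ 0 \to \underset{V \subsetneq W}\colim\, \Pt(V) \to \Pt(W) \to \St(W) \to 0 \]
splits, which forces the source $\hocolim_{\mc A_\Sigma}$ to also be a wedge of $\dim W$-sphere spectra.

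Maps between wedges of sphere spectra of the same dimension are determined by their action on $\pi_{\dim W}$, so it suffices to show that this action is zero. Under the identifications above, the induced map is exactly
\[ \underset{V \subsetneq W}\colim\, \Pt(V) \longrightarrow \Pt(W) \longrightarrow \St(W), \]
which vanishes by \autoref{lem_pi0_exact_seq} (the first map's image is exactly the kernel of the second). Because the source is simply connected (being a homotopy colimit of $(\dim W -1)$-connected spaces, for $\dim W \geq 2$) with homology concentrated in degree $\dim W$, Whitehead's theorem identifies it as a wedge of $\dim W$-spheres unstably, so the stable null-homotopy lifts to an unstable one. The very low-dimensional cases $\dim W \leq 1$ can be checked directly.

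The main obstacle I expect is verifying carefully that the zig-zag of stable equivalences from \autoref{suspension_main_diagram_1}, \autoref{suspension_main_diagram_2}, and \autoref{union_hocolim} assembles coherently, so that the induced map on $\pi_{\dim W}$ is genuinely the displayed composite. This amounts to tracking how the degree-one collapse maps and the obedient-map choices interact with the suspension functor $\Sigma\colon \mc A(V) \to \mc A_V(W)$ as $V$ varies, and most of this compatibility is already encoded in \autoref{suspension_main_diagram_2}.
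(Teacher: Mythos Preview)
Your approach is genuinely different from the paper's and, with one caveat, essentially correct. The paper argues constructively: it reads off from \autoref{obedient} that every piece of the homotopy colimit over $\mc A_\Sigma(W)$ with $U \neq 0$ already lands in $B(0 \subsetneq \bullet \subsetneq W)$ and hence goes to the basepoint, isolates the remaining pieces (those strings beginning at the initial tuple $\{S(W)\}$), and then builds an explicit null-homotopy on that residual complex by an inductive deformation into the contractible subcomplexes $B(V^\perp \subseteq \bullet \subsetneq W)$. Your argument is obstruction-theoretic: identify source and target as wedges of $(\dim W)$-spheres via the Solomon--Tits and reduced Solomon--Tits results, then observe that the induced map on $H_{\dim W}$ is the composite $\colim_{V\subsetneq W}\Pt(V)\to\Pt(W)\to\St(W)$, which vanishes by \autoref{lem_pi0_exact_seq}. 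Each approach has something to recommend it: yours is shorter and makes transparent why the result is ``forced'' by the $K_0$ exact sequence, while the paper's explicit null-homotopy is what is actually needed for the next lemma, where the construction must be redone equivariantly cell-by-cell over $EG$.

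The caveat is your justification that the source is simply connected. The claim ``a based homotopy colimit of $(\dim W-1)$-connected spaces is $(\dim W-1)$-connected'' is true, but it is not a one-liner: homotopy colimits do not preserve connectivity for free (unbased homotopy colimits of contractible spaces can realize any homotopy type of the nerve). You need the based Bousfield--Kan model, whose simplicial levels $\bigvee_{i_0\to\cdots\to i_k} F(i_0)$ are wedges of $(\dim W-1)$-connected spaces, together with a van Kampen argument on the $1$-skeleton of the realization (or a reference for the general statement that realization of a good simplicial based space with $n$-connected levels is $n$-connected). Once that is supplied, your long exact sequence argument using the cofiber description \eqref{big_quotient} and the reduced Solomon--Tits theorem does pin down the homology in a single degree, and the Hurewicz/Whitehead step goes through for $\dim W\geq 2$.
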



\begin{proof}
	By \autoref{obedient}, for any $k$-tuple of composable morphisms in $\mc A_{\Sigma}$ starting at $\{ P_i * S(U^\perp) \}$ and ending at $\{ Q_j * S(V^\perp) \}$, if $V \neq W$ and $U \neq 0$ then the corresponding piece of the homotopy colimit
	\[ \xymatrix{ \Delta^k \times \bigcup_i P_i * S(U^\perp) \ar[r] & B(0 \subsetneq \bullet \subseteq W) } \]
	lands in $B(0 \subsetneq \bullet \subsetneq W)$, and therefore goes to zero once $B(0 \subsetneq \bullet \subsetneq W)$ is quotiented out. Since this is $\mc A_\Sigma$, $V \neq W$ always, but sometimes $U = 0$. So the only remaining pieces of the homotopy colimit are those strings of maps beginning at the 1-tuple $\{ S(W)  = \emptyset * S(0^\perp) \}$, with $U = 0$.
	
	Each of the remaining pieces of the homotopy colimit is of the form $\Delta^k \times S(W)$, joined along faces that all contain the initial vertex of $\Delta^k$. Let $T$ denote the union of these simplices, with $\partial T$ the union of the faces of these simplices that are opposite the initial vertex. The rules described in \autoref{obedient} give us a map of pairs
	\begin{equation}\label{last_step}  \xymatrix{ (S(W) \times T,S(W) \times \partial T) \ar[r] & \left( B(0 \subsetneq \bullet \subseteq W) , B(0 \subsetneq \bullet \subsetneq W)\right) } \end{equation}
	By the discussion after \eqref{eq_i_di_compatibility_2}, the map we want on homotopy colimits is obtained from this by adding a disjoint basepoint to $S(W)$ and taking reduced suspension. We get the same result if, instead, we take unreduced suspension of the $S(W)$ on the left and the classifying spaces on the right, then pass to the quotient:
	\[ \xymatrix @R=0.5em{ (S^W \times T)/((* \times T) \cup (S^W \times \partial T)) \ar[r] & \left( S B(0 \subsetneq \bullet \subseteq W) \big/ S B(0 \subsetneq \bullet \subsetneq W)\right) \\
	& \cong \Sigma\left(B(0 \subsetneq \bullet \subseteq W) \big/ B(0 \subsetneq \bullet \subsetneq W)\right). } \]
	The check of this amounts to confirming that both the top and the bottom of the unreduced suspension are sent to the basepoint after everything is quotiented out.
	
	So, if we modify the map of pairs \eqref{last_step} up to homotopy, this will induce a homotopy on the quotient. This quotient is homeomorphic to the original homotopy colimit, quotiented out by those terms that are already sent to the basepoint. So this will induce a homotopy on the original homotopy colimit, as desired.
	
	We deform the map of pairs \eqref{last_step} to one that is constant in $S(W)$, i.e. factors through
	\[ \xymatrix{ (S(W) \times T,S(W) \times \partial T) \ar[r] & ({*} \times T, {*} \times \partial T). } \]
	We do this inductively on each face $F$ of $\partial T$. Each face $F \subseteq \partial T$ corresponds to some tuple of composable maps ending at $\{ Q_j * S(V^\perp) \}$. The corresponding map
	\[ S(W) \times F \to B(0 \subsetneq \bullet \subsetneq W) \]
	 lands in the contractible subcomplex $B(V^\perp \subseteq \bullet \subsetneq W)$.
	 
	 Consider the following general principle: if $K$ is a contractible complex, then the space of maps $S(W) \to K$ and the subspace of constant maps $S(W) \to {*} \to K$ are both contractible. Hence, any map from $F$ to the larger space, such that $\partial F$ lands in the smaller space, can be deformed rel $\partial F$ to land in the smaller space. Applying this to $K = B(V^\perp \subseteq \bullet \subsetneq W)$, we see that we can deform the map \eqref{last_step} on $S(W) \times F$, relative to $S(W) \times \partial F$, so that it is constant in $S(W)$.
	
	After this induction, the map of pairs \eqref{last_step} on $S(W) \times \partial T$ factors through $* \times \partial T$. We perform the inductive step one more time to $S(W) \times T$, using maps to the contractible complex $B(0 \subsetneq \bullet \subseteq W)$, and arrive at a map of pairs that is now constant in $S(W)$ across all of $T$.
	
	Once the map of pairs \eqref{last_step} factors through $* \times T$, we return to the unreduced suspension
	\[ \xymatrix @R=0.5em{ (I \times T)/((\{0\} \times T) \cup (I \times \partial T)) \ar[r] & \left( S B(0 \subsetneq \bullet \subseteq W) \big/ S B(0 \subsetneq \bullet \subsetneq W)\right) \\
	& \cong \Sigma\left(B(0 \subsetneq \bullet \subseteq W) \big/ B(0 \subsetneq \bullet \subsetneq W)\right). } \]
	Since $T$ is a cell complex, $I \times T$ deformation retracts onto $(\{0\} \times T) \cup (I \times \partial T)$, so this map is nullhomotopic.
\end{proof}

\begin{rmk}
	One might hope to make the above proof shorter, perhaps by adjusting the definition of obedient (\autoref{obedient}) to make the map vanish on all of $\mc A_\Sigma$, without the need for an additional homotopy. However, an examination of low-dimensional examples suggests that this is not possible. The initial term $S(W)$ is to blame for this additional step in the proof. However, as mentioned earlier, the theorem would not be true without it.
\end{rmk}

\begin{lem}
	The nullhomotopy of the previous lemma can be made equivariant, if the colimit on the left is smashed with $EG_+$ first.
\end{lem}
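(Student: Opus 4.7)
The plan is to promote the inductive construction of \autoref{big_nullhomotopy} to a parametrized statement, showing that the space of all such nullhomotopy data is weakly contractible, and then use the $EG$-trick to obtain an equivariant section.

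Concretely, I would define a space $\mc N$ whose points consist of an obedient map in the sense of \autoref{obedient} together with a nullhomotopy of the induced map on $\underset{\{Q_i\} \in \mc A_\Sigma(W)}\hocolim \bigvee_i \Sigma(Q_i/\partial Q_i)$ of exactly the form produced in the proof of \autoref{big_nullhomotopy}. That is, a point of $\mc N$ records an obedient map, plus, for each relevant cell of the homotopy colimit, a choice of deformation of the map restricted to that cell into the relevant contractible subcomplex $B(V^\perp \subseteq \bullet \subsetneq W)$ or $B(0 \subsetneq \bullet \subseteq W)$, together with the final deformation retraction of $I\times T$ onto $(\{0\}\times T)\cup(I\times \partial T)$. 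The group $G$ acts on $\mc N$ by conjugation, just as it does on the space of obedient maps, because every operation used in the construction (joins, spans, orthogonal complements, inclusions of contractible subcomplexes indexed by subspaces of $W$) is $G$-equivariant.

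The key claim is that $\mc N$ is weakly contractible. This is proven by essentially the same induction as in \autoref{introspective_hocolim} and in \autoref{big_nullhomotopy}: at each inductive step we are extending a partial datum, and the space of possible extensions is a space of maps from a cell into a contractible target, relative to its boundary. Such spaces are contractible, and a standard induction up the cells shows that the fibers of the forgetful maps between successive stages are contractible. The base case is the contractibility of the space of obedient maps, which is already established. The final deformation retraction of $I\times T$ onto its subcomplex is also a contractible space of choices since $T$ is a CW pair. Thus each stage's space is weakly contractible, and $\mc N$ is weakly contractible.

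Because $\mc N$ is weakly contractible and carries a $G$-action, there exists a $G$-equivariant map $EG \to \mc N$. Taking the adjoint of the evaluation map yields an equivariant obedient map together with an equivariant nullhomotopy on $EG_+ \sma \underset{\{Q_i\} \in \mc A_\Sigma(W)}\hocolim \bigvee_i \Sigma(Q_i/\partial Q_i)$, exactly as desired. The main (minor) obstacle is simply bookkeeping: verifying that the space of intermediate deformations in the induction really does fiber as a tower of contractible spaces, and that the $G$-action commutes with each forgetful map in this tower. This is routine once one notices that the conditions defining \autoref{obedient} only depend on subspaces $U, V \subseteq W$ and the spans of faces, all of which transform naturally under $G$, so the inductive stages of $\mc N$ are $G$-equivariant fibrations of contractible spaces.
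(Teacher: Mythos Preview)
Your approach is correct and amounts to a repackaging of the paper's argument. The paper does not explicitly form the space $\mc N$; instead it first fixes an equivariant obedient map via $EG \to \{\text{obedient maps}\}$, and then reruns the induction of \autoref{big_nullhomotopy} over \emph{pairs} of cells---a face $F$ of $T$ together with a free $G$-cell $G \times D^n$ of $EG$---solving the extension problem non-equivariantly on $D^n \times F$ rel boundary and propagating by the $G$-action. Your version bundles all of the nullhomotopy choices into a single weakly contractible $G$-space and applies the $EG$-trick once. Unwound, both are the same double induction; yours is tidier to state, while the paper's makes the cell-by-cell mechanism explicit and avoids having to topologize $\mc N$ carefully.
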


\begin{proof}
	As before, we take any map from $EG$ into the contractible space of obedient maps. The adjoint of this is a map
	\[ \xymatrix{
		EG_+ \sma \underset{\{ Q_i \} \in \mc A(W)}\hocolim\, \bigvee_i \Sigma(Q_i/\partial Q_i) \ar[r] & \Sigma\left( B(0 \subsetneq \bullet \subseteq W) \big/ B(0 \subsetneq \bullet \subsetneq W)\right)
	} \]
	that is equivariant, and that for every point of $EG$ comes from a obedient map. The map vanishes on most of the homotopy colimit, as before, and we reduce to defining an equivariant homotopy of maps
	\[ \xymatrix{ EG_+ \sma (S^W \times T)/((* \times T) \cup (S^W \times \partial T)) \ar[r] & \Sigma\left(B(0 \subsetneq \bullet \subseteq W) \big/ B(0 \subsetneq \bullet \subsetneq W)\right) } \]
	coming from an equivariant map of pairs
	\begin{equation}\label{last_step_eq}  \xymatrix{ (EG \times S(W) \times T, EG \times S(W) \times \partial T) \ar[r] & \left( B(0 \subsetneq \bullet \subseteq W) , B(0 \subsetneq \bullet \subsetneq W)\right). } \end{equation}
	The argument proceeds as before, but inducting over the \emph{pairs} of cells $F \to T$ and free $G$-cells $G \times D^n \to EG$. We deform the map on $D^n \times F$ to be constant in $S(W)$, by a non-equivariant homotopy rel $\partial(D^n \times F)$. This automatically extends to an equivariant homotopy on $G \times D^n \times F$, allowing us to do the inductive step equivariantly.
	
	After suspending, for the final homotopy we use that $EG \times I \times T$ equivariantly deformation retracts onto $EG \times (\{0\} \times T) \cup (I \times \partial T)$.
\end{proof}

This equivariant nullhomotopy defines a map from $EG_+$ smashed with the quotient \eqref{big_quotient} to $\Sigma\ST(W)$. As discussed in \autoref{dirty_trick}, this induced map is an equivalence of spectra
\[ \xymatrix{ \Sigma^W \ti K(\Pol{S(W)}{1}) \ar[r]^-\sim & \Sigma^\infty \Sigma\ST(W). } \]
De-suspending by $W$ and taking homotopy orbits, this finishes the proof of \autoref{intro_reduced}.

\section{Examples}\label{sec:examples}

We conclude with a few low-dimensional examples and direct computations.

\begin{ex}
	The empty geometry $\emptyset$. As mentioned before, 
	\[ K(\Pol{\emptyset}{1}) \simeq K(\Fin) \simeq \Sph. \]
	The empty geometry $\emptyset$ is also the unit sphere in $\R^0$. There are no proper subspaces to reduce by, so the reduced spherical $K$-theory is also the sphere:
	\[ \ti K(\Pol{S(\R^0)}{1}) \simeq \Sph. \]
	There are no nontrivial isometries, so there is nothing more to calculate.
\end{ex}

\begin{ex}
	The one-point geometry $E^0 \cong H^0$. We calculate
	\[ \PT(E^0) \simeq \ST(E^0) \simeq S^0 \]
	and so by \autoref{intro_main},
	\[ K(\Pol{E^0}{1}) \simeq \Sph. \]
	This could also be deduced by identifying it with $K(\Fin)$, as in the previous example. 
\end{ex}

\begin{ex}
	The two-point geometry $S^0$. In this case
	\[ \PT(S^0) \simeq S^0 \vee S^0, \qquad \ST(S^0) \simeq S^0. \]
	The $O(1)$-action on $\PT(S^0)$ swaps the two spheres. After suspending by the virtual bundle
	\[ -TS^0 := 1 - T(\R^1) = 1 - \sigma, \]
	where $\sigma$ denotes the sign representation, the action swaps and negates. Similarly, on $\ST(S^0)$ the action negates the fundamental class. We conclude using \autoref{intro_main} and \autoref{intro_reduced} that
	\[ \begin{array}{rcl}
		K(\Pol{S^0}{1}) &\simeq& \Sph \vee \Sph, \\
		K(\Pol{S^0}{O(1)}) &\simeq& \Sph, \\[0.5em]
		\ti K(\Pol{S^0}{1}) &\simeq& \Sph, \\
		\ti K(\Pol{S^0}{O(1)}) &\simeq& \Sigma^{1-\sigma}_+ BO(1).
	\end{array} \]
	
	This latter spectrum has homology equal to $H_n(BO(1);\Z^t)$, where the $t$ indicates $O(1)$ acting by negation. This homology is $\Z/2$ in every even degree and zero in every odd degree. In particular, the rationalization vanishes.
\end{ex}

\begin{ex}\label{main_computation}
	The noncompact 1-dimensional geometry $E^1 \cong H^1$. We calculate
	\[ \PT(E^1) \simeq \ST(E^1) \simeq S(\R^\delta) \]
	where the last term is the unreduced suspension of the reals with the discrete topology, not the unit sphere in $\R^\delta$. Desuspending by the (trivial) tangent bundle gives
	\[ K(\Pol{E^1}{1}) \simeq \bigvee_{\R^\delta \setminus \{0\}} \Sph. \]
	The homotopy groups for $K(\Pol{E^1}{T(1)})$ and $K(\Pol{E^1}{E(1)})$ follow easily from \autoref{intro_translation_homology}, but we will also show how to derive them directly.
	
	The action of $T(1)$ on $S(\R^\delta)$ is the obvious one that makes $\R^\delta \cong T(1)$ a single free orbit. Taking based homotopy $T(1)$-orbits gives the reduced suspension spectrum $\Sigma^\infty BT(1)$. (The top of the suspension gives an unbased copy of $BT(1)$, while the middle of the suspension becomes a single interval connecting $BT(1)$ to the basepoint.) We conclude
	\[ K(\Pol{E^1}{T(1)}) \simeq \Sigma^{-1}\Sigma^\infty BT(1). \]
	The reduced homology of $T(1) \cong \R$ is $\R^{\wedge i}$ in degree $i$, see e.g. \cite[4.7]{dupont_book}. Since these are rational, the $K$-theory spectrum is rational and these are the homotopy groups as well:
	\[ \begin{array}{rcl}
		K_n(\Pol{E^1}{T(1)}) &\cong& \R^{\wedge (n+1)}.
	\end{array} \]
	Finally, since this spectrum is rational, the homotopy $\Z/2$-orbits simply take $\Z/2$ coinvariants on the homotopy groups. We check that the action is by negation on the odd homotopy groups and the identity on the even homotopy groups, and conclude
	\[ \begin{array}{rcl}
		K_{2n}(\Pol{E^1}{E(1)}) &\cong& \R^{\wedge (2n+1)}, \\
		K_{2n+1}(\Pol{E^1}{E(1)}) &\cong& 0.
	\end{array} \]
	This finishes the proof of \autoref{intro_dim_one}.
\end{ex}

\begin{rmk}
	This example is helpful for explaining why we introduced the apartment-like maps in \autoref{apartment-like}. We can see directly that $S(\R^\delta)$ is equivalent to the homotopy colimit of the wedges $\bigvee_i P_i/\partial P_i$ over the almost-disjoint intervals $P_i \subseteq \R$. Each $P_i / \partial P_i$ traverses the two intervals in the suspension given by the endpoints of $P_i$. When we subdivide $P_i$ into two intervals, this is homotopic to the wedge of the corresponding maps for each of the smaller intervals.
	
	However, the cleanest way to describe this homotopy is to name a contractible space that both maps live in, the space of maps into $C(\R^\delta)$ (cone point at the top) in which the endpoints of each interval go to the corresponding points at the bottom of the cone, and the rest of the interval goes anywhere inside the cone $C(P_i^\delta)$. This is what the space of apartment-like maps (at least, the version described in \autoref{apartment-like_change}) achieves.
\end{rmk}

\begin{ex}
	We next consider 1-dimensional spherical geometry $S^1$. We identify $SO(2)$ with $(\R/\Z)^\delta$. Recall that $\PT(E^1)$ is the unreduced suspension of $\R^\delta$, equivalently the homotopy pushout of the following diagram.
	\[ \xymatrix @R=1.8em{
		\R^\delta \ar[d] \ar[r] & \R \\
		{*}.
	} \]
	If we take based homotopy $\Z$-orbits, we get the homotopy pushout of the following diagram.
	\[ \xymatrix @R=1.8em{
		(\R/\Z)^\delta \ar[d] \ar[r] & (\R/\Z) \\
		{*}.
	} \]
	This agrees with polytope complex $\PT(S^1)$, hence we have an equivalence of $O(2)$-spectra
	\[ \PT(S^1) \simeq \PT(E^1)_{h\Z}. \]
	Therefore without isometries we get
	\[ K(\Pol{S^1}{1}) \simeq \bigvee_{(\R/\Z)^\delta} \Sph, \]
	and with isometries we get the same answer as in the Euclidean case,
	\[ \begin{array}{rcl}
		K_n(\Pol{S^1}{SO(2)}) &\cong& \R^{\wedge (n+1)}, \\[0.5em]
		K_{2n}(\Pol{S^1}{O(2)}) &\cong& \R^{\wedge (2n+1)}, \\
		K_{2n+1}(\Pol{S^1}{O(2)}) &\cong& 0.
	\end{array} \]
\end{ex}

\begin{ex}
	We next consider 1-dimensional reduced spherical geometry. Continuing to identify $SO(2)$ with $(\R/\Z)^\delta$, we calculate
	\[ \xymatrix{ \ST(S^1) \simeq S\left(\left(\R/\frac12\Z\right)^\delta\right), } \]
	\[ \ti K(\Pol{S^1}{1}) \simeq \Sigma^{-1}\ST(S^1) \simeq \bigvee_{(\R/\frac12\Z)^\delta \setminus \{0\}} \Sph. \]
	It's easiest to describe this equivariantly by the homotopy fiber sequence
	\[ \xymatrix{ \ti K(\Pol{S^1}{1}) \ar[r] & \Sigma^\infty_+ \left(\R/\frac12\Z\right)^\delta \ar[r] & \Sph. } \]
	Taking homotopy orbits gives the fiber sequence
	\[ \xymatrix @R=0.5em{
		\ti K(\Pol{S^1}{SO(2)}) \ar[r] & \Sigma^\infty_+ B(\Z/2) \ar[r] & \Sigma^\infty_+ B\left(\R/\Z\right)^\delta. \\
	} \]
	This uses the Shapiro lemma, $(G/H)_{hG} = BH$. In more detail, $(\R/\frac12\Z)^\delta$ is all lines through the origin in $\R^2$. This is of the form $G/H$ where $G = SO(2)$, acting transitively on $(\R/\frac12\Z)^\delta$, and $H = \Z/2$, the stabilizer of a single line. Therefore the homotopy $G$-orbits of the set $(\R/\frac12\Z)^\delta$ is the space $B(\Z/2)$.
	
	Using \cite[4.7]{dupont_book} and the short exact sequence $0 \to \Z/2 \to \Q/\Z \to \Q/\frac12\Z \to 0$, we get
	\[ \begin{array}{c|cccccc}
		& H_0 & H_1 & H_2 & H_3 & \\\hline
		H_*((\R/\Z)^\delta) & \Z & \R/\Q \oplus \Q/\Z & (\R/\Q)^{\wedge 2} & (\R/\Q)^{\wedge 3} \oplus \Q/\Z & \cdots \\
		H_*(\ti K(\Pol{S^1}{SO(2)})) & \R/\Q \oplus \Q/\frac12\Z & (\R/\Q)^{\wedge 2} & (\R/\Q)^{\wedge 3} \oplus \Q/\frac12\Z & (\R/\Q)^{\wedge 4} & \cdots \\
	\end{array} \]
	
	Note that $\ti K(\Pol{S^1}{SO(2)})$ and $\ti K(\Pol{S^1}{O(2)})$ are not rational---the $K_0$ groups are both $\R/\frac12\Z$, which is not a rational vector space. The higher homotopy groups are therefore more difficult to calculate. We will be content to give the rational homotopy groups, which follow from the homology computation above:
	\[ \begin{array}{rcl}
		\ti K_n(\Pol{S^1}{SO(2)}) \otimes \Q &\cong& (\R/\Q)^{\wedge (n+1)}, \\[0.5em]
		\ti K_{2n}(\Pol{S^1}{O(2)}) \otimes \Q &\cong& (\R/\Q)^{\wedge (2n+1)}, \\
		\ti K_{2n+1}(\Pol{S^1}{O(2)}) \otimes \Q &\cong& 0.
	\end{array} \]
\end{ex}

\begin{ex}\label{translational_case}
	Finally, we consider $n$-dimensional Euclidean space $E^n$, but with $G$ equal to the translation group $T(n)$. By \autoref{intro_translation_homology}, we have
	\[ K_m(\Pol{E^n}{T(n)}) \cong H_m(T(n); \St(E^n)). \]
	We recall a splitting of these groups due to Dupont, see \cite[Thm 1.1]{dupont_82} and \cite[Thm 4.10]{dupont_book}. 
	Let $\T(\R^n)$ denote the Tits complex of linear subspaces of $\R^n$. Consider the ``local coefficient system'' $\Lambda^q_\Q(\mathfrak g)$ defined on the suspended Tits or Steinberg complex
	\[ \ST(\R^n) = B(0 \subsetneq \bullet \subseteq \R^n) \big/ B(0 \subsetneq \bullet \subsetneq \R^n) =  \CT(\R^n) / \T(\R^n) \]
	by assigning the simplex $V_0 \subseteq \cdots \subseteq V_k$ to the $q$th rational exterior power $\Lambda^q_\Q(V_0)$.
	
	This is not a local coefficient system in the usual sense, since these groups do not form a bundle over the complex $\CT(\R^n)$. However, every face map in this complex gives an evident homomorphism. For instance, deleting the 0th vertex has the effect of applying the inclusion $V_0 \subseteq V_1$, giving a map $\Lambda^q_\Q(V_0) \to \Lambda^q_\Q(V_1)$. This is enough to define a chain complex with coefficients in these groups.
	
	We reduce the chain complex by modding out by those simplices in the subspace $\T(\R^n)$ and take homology, calling the result
	\[ \ti H_*(\ST(\R^n); \Lambda^q_\Q(\mathfrak g)). \]
	Dupont's chain complex is defined instead using the simplices in $\T(\R^n)$, and then augmented by the group $\Lambda^q_\Q(\R^n)$. Our homology groups of $\ST(\R^n)$ are therefore the same as his homology groups of $\T(\R^n)$, but one degree higher:
	\[ \ti H_m\left(\ST(\R^n); \Lambda^q_\Q(\mathfrak g)\right) \cong H_{m-1}\left(\T(\R^n); \Lambda^q_\Q(\mathfrak g)\right). \]
	With this adjustment to the notation, Dupont's splitting is written as
	\begin{equation}\label{translation_splitting}
		H_m(T(n); \St(E^n)) \cong \bigoplus_{q=1}^{n} \ti H_{n-q}\left(\ST(\R^n);\Lambda^{m+q}_\Q(\mathfrak g)\right).
	\end{equation}
	The $q$th term of this splitting is the eigenspace of the ``dilation by $a$'' operator, with eigenvalue $a^{m+q}$, for any positive rational number $a$.
	
	Concretely, for $n = 2$ this gives
	\begin{align*}
		K_m(\Pol{E^2}{T(2)}) \cong & \ker\left(\bigoplus_{0 \subsetneq V \subsetneq \R^2} \Lambda^{m+1}_\Q (V) \to \Lambda^{m+1}_\Q (\R^2) \right) \\
		&\oplus \coker\left(\bigoplus_{0 \subsetneq V \subsetneq \R^2} \Lambda^{m+2}_\Q (V) \to \Lambda^{m+2}_\Q (\R^2) \right),  \\
	\end{align*}
	where the sum is taken over all one-dimensional vector subspaces, i.e. lines through the origin. The reader is invited to check this result against the proof of \autoref{rationality_proof}, which gives a cofiber sequence of rational spectra
	\[ \xymatrix{
		\displaystyle \bigvee_{0 \subsetneq V \subsetneq \R^2} \Sigma^\infty BT(V) \ar[r] & \Sigma^\infty BT(2) \ar[r] & \Sigma^2 K(\Pol{E^2}{T(2)}).
	} \]
	
	Returning to \eqref{translation_splitting} for general $n$, when $m < 0$, these groups are trivial, and when $m = 0$ the homology is unchanged if we take exterior powers over $\R$, see \cite[Thm 4.14]{dupont_book}. This allows for a considerable simplification of the terms in the splitting, see \cite[Cor 4.15]{dupont_book}. However, the argument does not apply in the range $m > 0$, where the exterior powers must be taken over $\Q$. It remains to be seen if these higher groups can be simplified in some other way.

\end{ex}

\bibliographystyle{amsalpha}
\bibliography{references}{}

\end{document}